\theoremstyle{plain}
\newtheorem{theorem}{Theorem}[section]
\newtheorem{lemma}[theorem]{Lemma}
\newtheorem{corollary}[theorem]{Corollary}
\newtheorem{proposition}[theorem]{Proposition}
\theoremstyle{definition}
\newtheorem{definition}[theorem]{Definition}
\theoremstyle{remark}
\newtheorem{remark}{Remark}
\pgfplotsset{compat=1.18}
\newcommand{\RR}{\mathbb{R}}
\newcommand{\eps}{\varepsilon}
\newcommand{\mcs}{\mathcal{S}}
\newcommand{\ul}{\boldsymbol}
\newcommand{\ml}{\boldsymbol}
\newcommand{\norm}[1]{\left\lVert#1\right\rVert}
\newtheorem{assumption}{Assumption}
\title{Model-Driven Subspaces for Large-Scale Optimization with Local Approximation Strategy} 
\author{Yitong He\orcidlink{0009-0000-1906-3528}\footnote{Department of Mathematics, University of California, Berkeley, \texttt{yitongh25@gmail.com}} \ and Pengcheng Xie\orcidlink{0000-0001-5973-1535}\footnote{Correpsonding author. Applied Mathematics and Computational Research Division,\\
    Lawrence Berkeley National Laboratory, Berkeley, CA 94720, USA. \texttt{pxie@lbl.gov}, \texttt{pxie98@gmail.com}}}
\begin{document}

\maketitle

\begin{abstract}
Solving large-scale optimization problems is a bottleneck and is very important for machine learning and multiple kinds of scientific problems. Subspace-based methods using the local approximation strategy are one of the most important methods. This paper discusses different and novel kinds of advanced subspaces for such methods and presents a new algorithm with such subspaces, called MD-LAMBO. Theoretical analysis including the subspaces' properties, sufficient function value decrease, and global convergence is given for the new algorithm. The related model construction on the subspaces is given under derivative-free settings. In numerical results, performance profiles, and truncated Newton step errors of MD-LAMBO using different model-driven subspaces are provided, which show subspace-dependent numerical differences and advantages of our methods and subspaces.
\end{abstract}

{\bf Keywords:} Optimization, Large-Scale, Subspace, Model-Based, Trust-Region

\section{Introduction}

Solving large-scale optimization problems is a fundamental challenge in machine learning and scientific computing. To alleviate the computational burden of full-space optimization, subspace-based methods have gained increasing attention. These methods restrict the search direction to a carefully constructed low-dimensional subspace, thereby reducing per-iteration complexity and improving efficiency. However, this simplification shifts the challenge toward designing subspaces that not only retain key descent information but also ensure theoretical convergence. 
Most existing algorithms employ subspaces spanned by gradients \cite{Yuan_Stoer_1995,zhang2023drsomdimensionreducedsecondorder,zhang2025scalablederivativefreeoptimizationalgorithms}, historical directions \cite{Arnold2023,xie2024newtwodimensionalmodelbasedsubspace}, quasi-Newton updates \cite{gower2019rsnrandomizedsubspacenewton,cartis2022randomisedsubspacegaussnewtonmethod} as well as other recent methods \cite{1artis2025randomsubspacecubicregularizationmethods,1chen2024q,1hare2025expected,1tansley2025scalablesecondorderoptimizationalgorithms,1You_2016_CVPR,dzahini2025noiseawarescalablesubspaceclassical, Cartis2023}. Though effective, such constructions may be insufficient when dealing with nonlinearity or derivative-free scenarios, and only very few of them have theoretical convergence guarantees. In this work, we focus on two-dimensional subspaces as a tractable yet expressive choice. We further enhance subspace design by introducing two key ideas: building subspaces with model gradients and using cubic regularization. The first idea is inspired by the observation that model-driven subspaces typically appear as linear subspaces truncated by certain regions. For this reason, we also call them truncated subspaces, where the truncations are observed to help save the time of solving subproblems. Our motivations of building subspaces with cubic-regularization-based models come from the following aspects. Primarily, though quadratic models are widely used, their gradients only include limited directions compared to the quadratic model with an additional cubic term. Such newly included directions can include previous iterating directions, which may be overlooked by pure quadratic models. Meanwhile, simple observations show that this cubic model preserves the ability to control the model-driven subspaces' dimension, mainly through carefully selecting the domain of model gradients used for generating subspaces. 

Our contributions are threefold. First, we propose a novel subspace construction framework applicable to both gradient-based and derivative-free settings (see \cite{xie2023dfoto,xie2025remuregionalminimalupdating,xieh2,xieyuannew,Conn2009,Alarie2020} for more details on related trust-region based derivative-free optimization). Second, we provide a theoretical analysis showing that our method ensures function value reduction under mild assumptions. Third, we present extensive numerical experiments demonstrating that different subspace constructions can have a substantial impact on convergence behavior. These results highlight the importance of subspace design in modern optimization algorithms.

The following algorithm presents a general framework of the subspace method for large-scale optimization by \citet{Conn_Toint_Sartenaer_Gould_1996}. Such method solves the optimization problem iteratively, while in each iteration, a low-dimensional subspace is generated on which an approximated solution $\bm x_{k+1}$ is obtained through solving the subproblem. It should be noted that $\bm x_{k+1}$ is not required to be the unique global minimizer though writing ``argmin". In the case where $\mathcal{S}_k$ is always 1-dimensional subspace, this method reduces to a line search method. Besides the subspace method, the trust region technique, being introduced in Powell's \cite{RN53} and Mor\'{e}'s \cite{More1980} papers in the 1970s and 1980s of the last century, is also a classic method for optimization. In each iteration, the trust-region algorithm yields an approximate solution close to the current iteration point. The region where the solution lives is called the trust region, where it was originally named ``restricted step methods" in \citet{FletcherBook2000} and then well-known as ``trust region" named by \citet{Sorensen1982}.

\begin{algorithm}
\caption{Subspace-based optimization method framework}
\vskip6pt
\begin{algorithmic}[1]
    \State Input $f: \RR^n \to \RR$, $\bm x_0\in\RR^n$, tolerance $\eps$, $k=0$.
    \For{$\Vert \bm g_k\Vert\geq \eps$}
        \State Choose a subspace $\mathcal{S}_k \subset\RR^n$.
            \State Calculate $\bm x_{k+1} \approx \mathrm{argmin} \{f(\bm x) \mathrel{:} \bm x\in \bm x_k+\mathcal{S}_k\}$.
        \State Let $k\leftarrow k+1$.
    \EndFor
%\Statex
\end{algorithmic}
\end{algorithm}

\section{Model-driven subspaces}
In this section, we introduce the truncated subspace, a novel class of subspaces that is characterized by being model-driven. Algorithm \ref{algo: complete algorithm} presents a general framework of subspace model-based (derivative-based) optimization in which our truncated subspace is adopted. For the derivative-free case, we provide a class of models as an alternative and prove their efficiency in Section \ref{section: Model functions for derivative-free algorithm}. In Section \ref{section: subspace and dimension}, the definition of truncated subspaces as well as their dimensionality is discussed. In Section \ref{section: Trust region subproblem on subspace}, we show how the classic truncated trust-region method is modified to solve the trust-region subproblem in truncated subspaces. For most of our truncated subspaces, a Cauchy-type sufficient decrease can be achieved by our solution.
%functor->map?
To define the truncated subspace, we first introduce the following map $\mathcal{F}$, where 
    \begin{align}
        \mathcal{F}&:\{\text{models}\}\to\{\text{subspaces}\},\ p_k(\bm{s})\mapsto \bigcup\limits_{\bm{s}\in\mathcal{W}}{\rm span}\{\nabla p_k(\bm{s})\},\text{ or }\label{eq:functor 1}\\
        \mathcal{F}&:\{\text{models}\}\to\{\text{subspaces}\},\ p_k(\bm{s})\mapsto {\rm span}\{\bigcup\limits_{\bm{s}\in\mathcal{W}}\nabla p_k(\bm{s})\},\label{eq:functor 2}
    \end{align}
% \end{subequations}
where $\mathcal{W}$ is a proper set of vectors. The subspace generated by \eqref{eq:functor 1} is called the first type of subspace, and the subspace generated by \eqref{eq:functor 2} is called the second type of subspace.
We will show in this section that different choices of $\mathcal{W}$ may yield different subspaces. Then, $\mathcal{F}$ corresponds each model $p_k$ to a subspace $\mathcal{S}=\mathcal{F}(p_k)$ which consists of directions provided by gradients of $p_k$.

A simple observation shows that, compared to subspaces generated by map \eqref{eq:functor 2}, which includes every linear combination of function gradients of $p_k$, subspaces generated by map \eqref{eq:functor 1} seem only to include limited directions of function gradients and
miss a lot of directions of their linear combination. However, we will show in the following sections that most of these truncated subspaces, though they may fail to span the whole linear subspace, do not exclude the minimizer of the projected model $m_k$ in the projected low-dimensional subspace. Hence, such limited subspace processes the advantages of both saving the searching area and ensuring the function value reduction. 
The following Algorithm \ref{algo: complete algorithm} provides the framework of {M}odel-{D}riven {L}ocal {A}pproximation {M}odel-{B}ased {O}ptimization (MD-LAMBO) where the subspaces are generated by \eqref{eq:functor 1} and \eqref{eq:functor 2}. Notice that parameters $\sigma$ and $\sigma_k$ are independent of each other, where $\sigma$ (allowed to be zero, kept unchanged through iterations) is only adopted in subspace-constructing while $\sigma_k$ (adapted at each iteration) is involved in subproblem-solving. The reason that we keep using the same $\sigma$ value to build our subspace is, it can be deduced from Theorem \ref{thm: sp. gene. by cup+span} and \ref{thm:sp. gene. by span+cup} in Subsection \ref{section: subspace and dimension} that, for positive $\sigma$, the mathematical expressions of most of our subspaces will not be affected by the exact value of $\sigma$ at all. 
In the following algorithm, we take the 2-dimensional truncated subspace as an example to explain the framework of MD-LAMBO, where the two-dimensional projection map $\tau^{(k)}_{\bm a,\bm b}$ mentioned in Step 3 is defined by $\tau^{(k)}_{\bm a,\bm b}:\bm x\mapsto((\bm x-\bm x_k)^\top\bm a,(\bm x-\bm x_k)^\top\bm b)$.

\begin{algorithm}%[htbp]
\caption{MD-LAMBO}
    \begin{algorithmic}[1]
   \State \textbf{Input.} $\bm x_0\in\mathbb{R}^n$, object function $f$, $\gamma_2\geq\gamma_1>1$ (or $\gamma_{inc>1}$ and $1>\gamma_{dec}>0$), $1>\eta_2\geq\eta_1>0$, \textcolor{black}{$\sigma\geq 0$} and $\sigma_0>0$. Let $k=0$.
   \State \textbf{Step 1. (Constructing the approximation function)} Obtain the second-order Taylor polynomial of $f$ at $\bm{x}_k$ and let
   \begin{equation*}p_k(\bm{s})=f(\bm{x}_k)+\nabla f(\bm{x}_k)^\top\bm{s}+\frac{1}{2}\bm{s}^\top \nabla^2f(\bm x_k)\bm{s}+\frac{\sigma}{3} \Vert \bm{s}\Vert^3.
   \end{equation*}

    \State \textbf{Step 2. (Generating the subspace)} Use the global approximating function $p_k$ and the functor $\mathcal{F}$ to generate the truncated subspace $\mathcal{S}_k=\mathcal{F}(p_k(\bm{s}))$.

    \State \textbf{Step 3. (Obtaining the projected subspace)} Let $\bm{a}_k=\nabla f(\bm{x}_k)$, obtain $\bm{b}_k\in \mathcal{S}_k$ such that $\bm a_k$ and $\bm b_k$ are linear independent. Apply the Gram-Schmidt process to $\bm a_k$, $\bm b_k$ and get unit vectors $\bm a_k^{(1)},\bm a_k^{(2)}$ that satisfies
    $\langle \bm{a}_k^{(1)},\bm{a}_k^{(2)}\rangle=0$. 
    Obtain the projected subspace $\hat{\mathcal{S}_k}=\tau_{\bm{a}_k^{(1)},\bm{a}_k^{(2)}}^{(k)}(\bm x_k+\mathcal{S}_k)$.
    
    %Then $S_k\subseteq S_{\bm{a}_k,\bm{b}_k}^{(k)}.$
    
    \State \textbf{Step 4. (Construct model with adaptive cubic term)} Obtain the model $m_k$ on the projected subspace $\hat{\mathcal{S}_k}$.
    \begin{equation*}
        m_k(x,y)= f(\bm{x}_k)+\bm{P}_k^\top \nabla f(\bm{x}_k) (x,y)^\top+\frac{1}{2}(x,y)\bm{P}_k^\top\nabla^2f(\bm{x}_k)\bm{P}_k(x,y)^\top+\frac{\sigma_k}{3}\Vert (x,y)^\top\Vert^3,
    \end{equation*}
    where $\bm{P}_k=[\bm{a_k,\bm{b}_k}]\in\mathbb{R}^{n\times 2}$ is the projection matrix.
    
     \State \textbf{Step 5. (Trust-region trial step)} Solve the subproblem of $m_k$ on the projected subspace $\hat{\mathcal{S}}_k$ and obtain the solution $(x^*,y^*)$. Then compute
        \begin{equation*}     \rho_k=\frac{f(\bm{x}_k)-f(\bm{x}_k+\bm{P}_k (x^*,y^*)^\top)}{m_k(0,0)-m_k(x^*,y^*)}.
        \end{equation*}        

        \State \textbf{Step 6. Updating} Update and obtain $x_{k+1}$ and $\sigma_k$ based on the \textcolor{black}{ARC} method.
        Increment $k$ by one and go to \textbf{Step 1}.

    \end{algorithmic}
    \label{algo: complete algorithm}
\end{algorithm}
For the $q$-dimensional truncated subspace (satisfying $\nabla f(\bm x_k)\in\mathcal{S}_k$) the projection matrix $\bm P_k=[\bm a_k^{(1)},\cdots,\bm a_k^{(q)}]$ is obtained through applying Gram-Schmidt process to $q$ linearly independent vectors in $\mathcal{S}_k$ where the leftmost vector is $\nabla f(\bm x_k)$. Therefore there always be $\bm a_k^{(1)}=\nabla f(\bm x_k)/\Vert\nabla f(\bm x_k)\Vert$. Moreover, their projection map is (generalization of $\tau_{\bm a_k,\bm b_k}^{(k)}$ in Algorithm \ref{algo: complete algorithm}) \begin{equation*}
    \tau_k: \bm x_k+\mathcal{S}_k\to\mathbb{R}^q,\ \bm x\to ( (\bm x-\bm x_k)^\top\bm a_k^{(1)},\cdots,(\bm x-\bm x_k)^\top\bm a_k^{(q)}),
\end{equation*}
where $q=\dim(\mathcal{S}_k)$. The definition of the dimensionality of truncated subspaces is given by Definition \ref{def:dimension of subspace} in Section \ref{section: subspace and dimension}. It should be noted that for truncated subspaces $\mathcal{S}_k$ which do not satisfy $\nabla f(\bm x_k)\in\mathcal{S}_k$, to obtain the projection matrix, we just select $q$ linearly independent vectors $\bm y_1,\cdots,\bm y_q$, then apply the Gram-Schmidt process. We present sixteen subspaces in Section \ref{section: subspace and dimension} in total. By Theorem \ref{thm: sp. gene. by cup+span} and Theorem \ref{thm:sp. gene. by span+cup} which gives the explicit mathematical expressions of each subspace, only two subspaces $\mathcal{S}_{\bm s_1,\bm s_2}^{(1,0)}$ and $\mathcal{S}_{\bm s_1,\bm s_2}^{(1,\sigma)}$ do not satisfy $\nabla f(\bm x_k)\in \mathcal{S}_k$ therefore needed to be treated specially when obtaining the projection matrix.

\subsection{Subspaces generated by model gradients}\label{section: subspace and dimension}

At each iteration, a proper subspace $\mathcal{S}_k$ is generated utilizing the gradient directions of the function $p_k$. In this subsection, we provide 16 novel subspaces in total, where Table \ref{tab:subspace-1} lists 8 kinds of subspaces generated by \eqref{eq:functor 1} and Table \ref{tab:subspace-2} collects the rest of 8 kinds of subspaces given by \eqref{eq:functor 2}. For convenience, we denote $\bm s_1=({\bm x_{k-2}-\bm x_k})/{\|\bm x_{k-2}-\bm x_k\|}$, $\bm s_2=({\bm x_{k-1}-\bm x_k})/{\|\bm x_{k-1}-\bm x_k\|}$ and without loss of generality, assume $\bm s_1\neq \bm s_2$.

\begin{table}[htbp]
    \caption{The first type of subspaces: $\mathcal{S}_k=\bigcup_{\bm{s}\in \mathcal{W}} {\rm span}\{\nabla p_k(\bm{s})\}$}
    \centering
    \begin{tabular}{m{3cm}<{\centering}m{4cm}<{\centering}m{4cm}<{\centering}}
    \toprule
     $\mathcal{W}$ & $p_k(\bm{s})=Q_k(\bm{s})$ & $p_k(\bm{s})=Q_k(\bm{s})+\sigma \|\bm{s}\|^3$ \\
    \midrule
       $\mathcal{B}(\bm 0;1)$  & $\mathcal{S}_{\mathcal{B}}^{(1,0)}\subseteq C_{\mathcal{B}(\bm g;\Delta_{0})}$  & $\mathcal{S}_{\mathcal{B}}^{(1,\sigma)}\subseteq C_{\mathcal{B}(\bm g;\Delta_{\sigma})}$ \\

       %\hline 

        span$\{\nabla p_k(\bm 0)\}$ & $\mathcal{S}_{l}^{(1,0)}\cong\mathbb{R}^2\backslash \mathbb{R}^1$ &   $\mathcal{S}_{l}^{(1,\sigma)}\subseteq \RR^2$\\
     
        \vspace{1mm}
        $\{\bm s_1,\bm s_2\}$ &  \vspace{1mm}$\mathcal{S}_{\bm s_1,\bm s_2}^{(1,0)}\cong\mathbb{R}^1\vee\mathbb{R}^1$ & \vspace{1mm}$\mathcal{S}_{\bm s_1,\bm s_2}^{(1,\sigma)}\cong\mathbb{R}^1\vee\mathbb{R}^1$ \\

        span$\{\bm s_1,\bm s_2\}$ & $\mathcal{S}_{\langle\bm s_1,\bm s_2\rangle}^{(1,0)}\cong\mathbb{R}^3\backslash\mathbb{R}^2$ & $\mathcal{S}_{\langle\bm s_1,\bm s_2\rangle}^{(1,\sigma)}\subseteq\RR^5$
        \\
        %\hline
    \bottomrule
    \end{tabular}
    \label{tab:subspace-1}
\end{table}

\begin{table}[htbp]
    \centering
    \caption{The second type of subspaces: $\mathcal{S}_k=\text{span}\{\bigcup_{\bm{s}\in\mathcal{W}}\nabla p_k(\bm{s})\}$}
    \centering
    \begin{tabular}{m{3cm}<{\centering}m{4cm}<{\centering}m{4cm}<{\centering}}
    \toprule
     $\mathcal{W}$ & $p_k(\bm{s})=Q_k(\bm{s})$  & $p_k(\bm{s})=Q_k(\bm{s})+\sigma \|\bm{s}\|^3$\\
    \midrule
        $\mathcal{B}(\bm 0;1)$  & $\mathcal{S}_{\mathcal{B}}^{(2,0)}=\mathbb{R}^n$ & $\mathcal{S}_{\mathcal{B}}^{(2,\sigma)}=\mathbb{R}^n$  \\
        span$\{\nabla p_k(0)\}$ & $\mathcal{S}_{l}^{(2,0)}\cong\mathbb{R}^q,q\leq 2$ & 
$\mathcal{S}_{l}^{(2,\sigma)}\cong\mathbb{R}^q,q\leq 2$\\
        $\cup_{i=1}^2{\rm span}\{\bm s_i\}$ & 
        $\mathcal{S}_{\langle\bm s_1\rangle,\langle\bm s_2\rangle}^{(2,0)}\cong\mathbb{R}^q,q\leq 3$
        &
        $\mathcal{S}_{\langle\bm s_1\rangle,\langle\bm s_2\rangle}^{(2,\sigma)}\cong\mathbb{R}^q,q\leq5$
        \\
        span$\{\bm s_1,\bm s_2\}$ & $\mathcal{S}_{\langle\bm s_1,\bm s_2\rangle}^{(2,0)}\cong\mathbb{R}^q,q\leq3$ & $\mathcal{S}_{\langle\bm s_1,\bm s_2\rangle}^{(2,\sigma)}\cong\mathbb{R}^q,q\leq5$\\
     \bottomrule   
    \end{tabular}   
    \label{tab:subspace-2}
\end{table}
The first column of Table \ref{tab:subspace-1} and \ref{tab:subspace-2} list four cases of inspiring regions $\mathcal{W}$, where $\mathcal{W}$ takes from two points in the iteration, a line spanned by the directions of function gradient, two lines with directions provided by iterative points, the plane spanned by such two iterative points and the unit ball in $\mathbb{R}^n$. \textcolor{black}{Noted that symbol $\cong$ in both tables means that there is a linear bijection from $\mathcal{S}_k$ to the space following $\cong$}. In Table \ref{tab:subspace-1}, the inclusion symbol $\subseteq$ in the first, second and the last row roughly means the inclusion between subsets of $\RR^n$, where $\mathcal{S}_l^{(1,\sigma)}\subseteq \mathbb{R}^2$ ($\mathcal{S}_{\langle \bm s_1,\bm s_2\rangle}^{(1,\sigma)}\subseteq\RR^5$) in fact means the subspace $\mathcal{S}_l^{(1,\sigma)}\subseteq \mathcal{S}$ ($\mathcal{S}_{\langle \bm s_1,\bm s_2\rangle}^{(1,\sigma)}\subseteq\mathcal{S}$) for some 2-dimensional (5-dimensional) linear subspace of $\RR^n$. For subspace $\mathcal{S}_{\mathcal{B}}^{(1,0)}$, however, we write $\mathcal{S}_{\mathcal{B}}^{(1,0)}\cong \mathcal{B}(\bm 0;1)$ to indicate that the two spaces are homeomorphic to each other under the Euclidean topology. Generally, we use $\mathcal{S}$ to represent the subspace, the subscript to represent the inspiring region, and the superscript to indicate the type and existence of the cubic term. For example, we use $\mathcal{S}_l^{(1,\sigma)}$ to refer to the subspace $\cup_{\bm s\in\mathrm{span}\{\nabla p_k(\bm 0)\}}\mathrm{span}\{\nabla p_k(\bm s)\}$ where $p_k(\bm s)=Q_k(\bm s)+\sigma\Vert \bm s\Vert ^3$. Here the subscript $l$ is used as an indicator of $\mathcal{W}=\mathrm{span}\{\nabla p_k(\bm 0)\}$ since the inspiring region is a one-dimensional linear subspace (a straight line), and $(1,\sigma)$ indicates that the subspace belongs to the first-type while the coefficient of cubic term $\sigma\Vert\bm s\Vert^3$ is nonzero, i.e., the cubic term is adopted. Similarly, $\mathcal{S}_{\langle\bm s_1,\bm s_2\rangle}^{(2,0)}$ stands for the subspace $\mathrm{span}\{\cup_{\bm s\in\mathrm{span}\{\bm s_1,\bm s_2\}}\nabla p_k(\bm s)\}$ where $p_k(\bm s)=Q_k(\bm s)$. Here, the superscript $(2,0)$ means the subspace is second-type while the model $p_k$ is pure quadratic, i.e., the coefficient $\sigma$ is zero.

These inspiring regions are considered for their dimensions vary from one to $n$ when being regarded as a collection of vectors. Since for quadratic functions, calculating gradients can be viewed as a linear transformation with a translation, such inspiring regions tend to yield our desired truncated two-dimensional subspaces.
In the case where the inspiring region is a one-dimensional linear subspace, we select the vector $\nabla p_k(\bm 0)$ as the direction of the line mainly for the following reasons. Firstly, $p_k(\bm s)$ decreases the most intensively along the $-\nabla p_k(\bm s)$, therefore it is important to include directions generated by points on the line spanned by $\nabla p_k$ when constructing subspaces. Secondly, since $p_k$ is the approximation of the object function $f$ whose gradient is only known to be accurate at $\bm s=\bm 0$, we take the direction of $\nabla p_k(\bm 0)$ as representative. 

\begin{table}[htbp]
\caption{Projection of $\bm J=(1,\cdots,1)^\top$ onto different subspaces}
    \centering
    \begin{tabular}{m{1.8cm}<{\centering}m{2cm}<{\centering}m{2cm}<{\centering}m{1.8cm}<{\centering}m{2cm}<{\centering}m{2cm}<{\centering}}
    \toprule
   
      $\mathcal{S}_k$   & $\bm P_k$ & $\tau_k(\bm J)$ & $\mathcal{S}_k$   & $\bm P_k$ & $\tau_k(\bm J)$ \\
    \midrule    $\mcs_{\mathcal{B}}^{(1,\sigma)}$  & $[e_1,\cdots,e_q]$ & $ (0,1,\cdots,1)^\top$ & $\mcs_{\mathcal{B}}^{(2,\sigma)}$  & $[e_1,\cdots,e_q]$ & $(0,1,\cdots,1)^\top$\\
    %======subspace 6======
    $\mcs_{l}^{(1,\sigma)}$  & $e_1$ & $0$ & $\mcs_{l}^{(2,\sigma)}$  & $e_1$ & $0$\\
    %======subspace 7======
    $\mcs_{\bm s_1,\bm s_2}^{(1,\sigma)}$  & $[e_1,e_2]$ & $(0,1)^\top$ & $\mcs_{\langle\bm s_1\rangle,\langle\bm s_2\rangle}^{(2,\sigma)}$  & $[e_1,e_2]$ & $(0,1)^\top$\\
    %======subspace 8======
    $\mcs_{\langle\bm s_1,\bm s_2\rangle}^{(1,\sigma)}$  & $[e_1,e_2]$ & $(0,1)^\top$& $\mcs_{\langle\bm s_1,\bm s_2\rangle}^{(2,\sigma)}$  & $[e_1,e_2]$ & $(0,1)^\top$\\
    \bottomrule
    \end{tabular}
    \label{tab: projection of J onto type-1 sp}
\end{table}

Table \ref{tab: projection of J onto type-1 sp} lists the projection of vector $\bm J=(1,1,\cdots,1)^\top\in\mathbb{R}^n$ onto the 16 subspaces in the case where the object function $f(\bm x)=\Vert \bm x\Vert^2$ ($\tau_k$ can be extended to the whole space $\RR^n$ with the same definition if $\bm J\notin\bm x_k+\mathcal{S}_k$). Let $\bm x_k=e_1=(1,0,\cdots,0)^\top.$ We choose $ \bm s_1=e_1,\bm s_2=e_2$. Note that $\bm P_k$ is the projection matrix whose construction has already been discussed right after the framework of Algorithm \ref{algo: complete algorithm}. By showing the projection matrix and results of a specific vector under concrete settings, we hope to give a clearer picture of the projection map $\tau_k$ and therefore the projected subspaces $\hat{\mathcal{S}_k}$. For the convenience of calculation, we select the linearly independent vectors from $\nabla p_k(\mathcal{W})$ and then apply the Gram-Schmidt process to obtain column vectors of $\bm P_k$. Results in Table \ref{tab: projection of J onto type-1 sp} indicate that, one can always select the base of $\mathcal{S}_k$ to make the projection matrix as simple as the identity matrix. Moreover, under the same $f,\bm x_k,\bm s_1,\bm s_2$ settings, as long as the linearly independent vectors are selected from $\nabla p_k(\mathcal{W})$, $\mathcal{S}_\mathcal{W}^{(i,\sigma)}$ will share the same projection matrix for the same $\mathcal{W}$ and for all $i=1,2,\sigma\geq 0$. Furthermore, one may notice that $\tau_k(\bm J)=0$ in the case $\mathcal{S}_k=\mathcal{S}_l^{(i,\sigma)}$ ($i\in\{1,2\},\sigma\geq 0$), meaning that $\bm J$ (in this case $\bm J\notin\bm x_k+\mathcal{S}_k$) is corresponded to the origin, the same point as $\bm x_k$, on the projected subspace $\hat{\mathcal{S}}_k$. This reflects that, being restricted to the subspace $\mathcal{S}_k$, $\tau_k$ simply functions as an orthogonal projection, which preserves the metric structure within the subspace $\mathcal{S}_k$. However, being extended to the entire space $\RR^n$, $\tau_k$ can compress the points outside and inside the subspace $\mathcal{S}_k$ together and cause the loss of global information.

Throughout the paper, we will use $Q$ or $Q_k$ to represent quadratic functions. Hence in MD-LAMBO, the model $p_k$ can be written as $p_k(\bm s)=Q_k(\bm s)+\sigma\Vert \bm s\Vert^3$ where $\nabla Q_k(\bm 0)=\nabla f(\bm x_k)$, $\nabla^2Q_k(\bm s)=\nabla^2 f(\bm x_k)$ for all $\bm s\in\mathcal{S}_k$. Since $\nabla f(\bm x_k)=\bm 0$ implies the termination of our algorithm, and when $\bm x_k$ is close enough to a minimal point, $\nabla^2 f(\bm x_k)$ is positive semi-definite. Therefore, for a clear depiction of truncated subspaces, we will assume $\nabla Q_k(\bm 0)\neq \bm 0$ and (slightly stronger than) $\nabla^2 Q_k(\bm s)$ is positive definite in the following two theorems. Also, to simplify the arguments about the rank of $\mathcal{S}_k$ when it is viewed as a group of vectors, we may assume that $\nabla Q_k(\bm 0)$ and $\nabla^2 Q_k(\bm s)$ (for all $\bm s$) are linearly independent.

\begin{theorem}\label{thm: sp. gene. by cup+span}
    Let $p_k$ be the model in MD-LAMBO. Then the first type of subspace $\mathcal{F}(p_k)=\bm{x}_k+\bigcup_{\bm{s}\in \mathcal{W}} {\rm span}\{\nabla p_k(\bm{s})\}$ is
    \begin{enumerate}
        \item $\mathcal{S}_{\mathcal{B}}^{(1,\sigma)}\subseteq C_{\mathcal{B}(\bm g;\Delta_\sigma)}(\subseteq\mathbb{R}^n$), if   $\mathcal{W}=\mathcal{B}(\bm 0;1)$;
        
        \item $\mathcal{S}_{\bm s_1,\bm s_2}^{(1,\sigma)}\cong\mathbb{R}^1\vee_{\bm x_k}\mathbb{R}^1$, if $\mathcal{W}=\{\bm s_1,\bm s_2\}$;   \item$\mathcal{S}_{\langle\bm s_1,\bm s_2\rangle}^{(1,0)}
            \begin{cases}     \cong\mathbb{R}^3\backslash\mathbb{R}^2,&\text{ if }\sigma =0\\
                \subseteq{\rm span}\{\bm g,\bm B\bm s_1,\bm B\bm s_2,\bm s_1,\bm s_2\},&\text{ if }\sigma>0
            \end{cases}$, in the case where $\mathcal{W}={\rm span}\{\bm s_1,\bm s_2\}$;   \item$\mathcal{S}_{l}^{(1,\sigma)}\begin{cases}
            \cong\RR^2\backslash\RR^1,&\text{ if }\sigma=0\\
            =\{ar_k^{(1)}+br_k^{(2)}:a,b\in\RR, ab\geq 0\},r_k^{(i)}\in\RR^n,&\text{ if }\sigma>0
        \end{cases}$, in the case where $\mathcal{W}=\mathrm{span}\{\nabla p_k(\bm 0)\}$.
    \end{enumerate}
\end{theorem}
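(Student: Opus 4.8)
My plan hinges on a single computation. Because $p_k(\bm s)=Q_k(\bm s)+\sigma\|\bm s\|^3$ with $\nabla Q_k(\bm 0)=\bm g:=\nabla f(\bm x_k)$, $\nabla^2 Q_k\equiv\bm B:=\nabla^2 f(\bm x_k)$, and $\nabla\!\left(\|\bm s\|^3\right)=3\|\bm s\|\bm s$, the model gradient is
\begin{equation*}
\nabla p_k(\bm s)=\bm g+\bm B\bm s+3\sigma\|\bm s\|\bm s=\bm g+\left(\bm B+3\sigma\|\bm s\|\bm I\right)\bm s .
\end{equation*}
So for each admissible $\mathcal{W}$ the plan is to describe the image $\{\nabla p_k(\bm s):\bm s\in\mathcal{W}\}$, then the union of the one-dimensional subspaces it spans, and finally translate by $\bm x_k$; I would organize the four items by increasing difficulty.

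For $\mathcal{W}=\mathcal{B}(\bm 0;1)$ (item 1) only a containment is claimed and it is immediate from the formula: $\|\nabla p_k(\bm s)-\bm g\|=\|(\bm B+3\sigma\|\bm s\|\bm I)\bm s\|\le(\|\bm B\|+3\sigma\|\bm s\|)\|\bm s\|\le\|\bm B\|+3\sigma$ for every $\|\bm s\|\le1$, so with $\Delta_{\sigma}:=\|\bm B\|+3\sigma$ (and $\Delta_{0}=\|\bm B\|$) every model gradient lies in $\mathcal{B}(\bm g;\Delta_{\sigma})$, hence every spanned line lies in $C_{\mathcal{B}(\bm g;\Delta_{\sigma})}$. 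For $\mathcal{W}=\{\bm s_1,\bm s_2\}$ (item 2) the image is just the pair $\bm v_i:=\nabla p_k(\bm s_i)=\bm g+(\bm B+3\sigma\bm I)\bm s_i$ (using $\|\bm s_i\|=1$), so $\mathcal{F}(p_k)$ is the union of the two lines $\bm x_k+\mathrm{span}\{\bm v_i\}$; I would finish by showing $\bm v_i\neq\bm 0$ and $\bm v_1,\bm v_2$ are not proportional, which makes this a wedge of two distinct copies of $\mathbb{R}^1$ glued at $\bm x_k$. Indeed $\bm v_1=\lambda\bm v_2$ forces $(1-\lambda)\bm g=(\bm B+3\sigma\bm I)(\lambda\bm s_2-\bm s_1)$: if $\lambda=1$ then $(\bm B+3\sigma\bm I)(\bm s_2-\bm s_1)=\bm 0$, impossible since $\bm B+3\sigma\bm I$ is positive definite and $\bm s_1\neq\bm s_2$; if $\lambda\neq1$ then $\bm g\in(\bm B+3\sigma\bm I)\mathrm{span}\{\bm s_1,\bm s_2\}\subseteq\mathrm{span}\{\bm B\bm s_1,\bm B\bm s_2,\bm s_1,\bm s_2\}$, excluded by the standing linear-independence hypothesis (which likewise gives $\bm v_i\neq\bm 0$). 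For $\mathcal{W}=\mathrm{span}\{\bm s_1,\bm s_2\}$ with $\sigma>0$ (the easy half of item 3) writing $\bm s=\beta\bm s_1+\gamma\bm s_2$ shows $\bm B\bm s\in\mathrm{span}\{\bm B\bm s_1,\bm B\bm s_2\}$ and $3\sigma\|\bm s\|\bm s\in\mathrm{span}\{\bm s_1,\bm s_2\}$, so every $\nabla p_k(\bm s)$, and hence $\mathcal{S}_{\langle\bm s_1,\bm s_2\rangle}^{(1,\sigma)}$, lies in $\mathrm{span}\{\bm g,\bm B\bm s_1,\bm B\bm s_2,\bm s_1,\bm s_2\}$.

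The two remaining $\sigma=0$ statements are affine and follow one template. When $\sigma=0$ the map $\bm s\mapsto\bm g+\bm B\bm s$ is affine, so over $\mathcal{W}=\mathrm{span}\{\bm s_1,\bm s_2\}$ its image is the affine plane $\bm g+\bm B\,\mathrm{span}\{\bm s_1,\bm s_2\}$, and over $\mathcal{W}=\mathrm{span}\{\nabla p_k(\bm 0)\}=\mathrm{span}\{\bm g\}$ its image is the affine line $\bm g+\mathbb{R}\bm B\bm g$. In coordinates relative to the basis $(\bm g,\bm B\bm s_1,\bm B\bm s_2)$ of the $3$-dimensional span (respectively $(\bm g,\bm B\bm g)$ of the $2$-dimensional span) — a legitimate basis precisely because the standing hypothesis makes these vectors independent, and which at the same time places the origin off the image — the image becomes the coordinate plane $\{x_1=1\}$ (respectively the coordinate line $\{x_1=1\}$), and the union of the lines through the origin and its points is exactly $\{x_1\neq0\}\cup\{\bm 0\}$; the linear coordinate change then yields $\mathcal{S}_{\langle\bm s_1,\bm s_2\rangle}^{(1,0)}\cong\mathbb{R}^3\backslash\mathbb{R}^2$ and $\mathcal{S}_{l}^{(1,0)}\cong\mathbb{R}^2\backslash\mathbb{R}^1$.

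The genuinely delicate case, and the one I expect to absorb most of the effort, is item 4 with $\sigma>0$. Parametrizing $\mathcal{W}=\{t\bm g:t\in\mathbb{R}\}$, the formula gives $\nabla p_k(t\bm g)=\left(1+3\sigma\|\bm g\|\,t|t|\right)\bm g+t\bm B\bm g$, which still lives in the $2$-plane $\mathrm{span}\{\bm g,\bm B\bm g\}$ but, in the $(\bm g,\bm B\bm g)$-coordinates $(x,y)$, traces not a line but the piecewise-parabolic curve $x=1+3\sigma\|\bm g\|\,y|y|$. I would compute $\bigcup_t\mathrm{span}\{\nabla p_k(t\bm g)\}$ by analysing the slope $y/x=t/(1+3\sigma\|\bm g\|\,t|t|)$: it is monotone on each sign regime of $t$, vanishes at $t=0$, and reaches its extreme value at $t^{\ast}=1/\sqrt{3\sigma\|\bm g\|}$, which exposes the two boundary rays $\bm r_k^{(1)}=\nabla p_k(\bm 0)=\bm g$ and $\bm r_k^{(2)}=\nabla p_k(t^{\ast}\bm g)$, and I would then identify the cone with the double wedge $\{a\bm r_k^{(1)}+b\bm r_k^{(2)}:ab\ge0\}$. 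The obstacle is exactly that the image is curved rather than affine: the cone is no longer ``a subspace with a subspace deleted'' but a region cut out by the nonlinear relation above, so one must prove both that every ray between $\bm r_k^{(1)}$ and $\bm r_k^{(2)}$ is attained and that no other ray is, which calls for the monotonicity analysis of the slope together with care about the non-differentiability of $\|\bm s\|^3$ along the line through $\bm 0$. Once the image/cone picture is in place, items 1–3 and the $\sigma=0$ part of item 4 are routine.
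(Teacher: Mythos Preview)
Items 1--3 and the $\sigma=0$ half of item 4 follow the paper's argument essentially verbatim: compute $\nabla p_k(\bm s)=\bm g+(\bm B+\text{const}\cdot\|\bm s\|\,\bm I)\bm s$, describe its image over $\mathcal{W}$, and take the union of one-dimensional spans. The paper is terser (item 2 is dispatched as ``Obviously''), but the substance is the same.

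The gap is in item 4 with $\sigma>0$, where your identification of the boundary rays is wrong. Writing $c$ for the constant in front of $t|t|$, the slope $t\mapsto t/(1+ct^{2})$ on $t>0$ is \emph{not} monotone: it rises from $0$ to the maximum $1/(2\sqrt{c})$ at $t^{\ast}=1/\sqrt{c}$ and then decays back to $0$. Hence the ray through $\bm g=\nabla p_k(\bm 0)$ has slope $0$ and lies in the \emph{interior} of the cone, not on its boundary; with your choice $\bm r_k^{(1)}=\bm g$ the set $\{a\bm r_k^{(1)}+b\bm r_k^{(2)}:ab\ge0\}$ contains only rays of nonnegative slope, i.e.\ half of the intended double wedge. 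The paper instead sets up affine $(\bm g,\bm B\bm g)$-coordinates, recognises the image as the parabola $x=1+cy^{2}$, and finds the two tangents through the origin by setting the discriminant of $\{x=1+cy^{2},\ y=kx\}$ to zero; this yields $k=\pm 1/(2\sqrt{c})$ and the symmetric pair $\bm r_k^{(1)}=2\bm g+\bm B\bm g/\sqrt{\sigma\|\bm g\|}$, $\bm r_k^{(2)}=2\bm g-\bm B\bm g/\sqrt{\sigma\|\bm g\|}$. There is a second, subtler issue hidden in your (formally correct) use of $t|t|$: for $t<0$ the image becomes $x=1-cy^{2}$, and lines from the origin through that branch realise \emph{every} slope (the denominator $1-ct^{2}$ changes sign at $t=-1/\sqrt{c}$), so a literal reading collapses the double wedge to the whole plane. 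The paper sidesteps this by writing the coefficient as $1+\sigma t^{2}\|\bm g\|$ throughout; you should either adopt that convention or confront the $t<0$ branch explicitly.
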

\begin{proof}{Proof}
Suppose $Q_k(\bm{s})=c+\bm{g}^\top \bm{s}+\frac{1}{2}\bm{s}^\top\bm{B}\bm{s}$, then $\nabla p_k(\bm{s})=\bm{g}+(\bm{B}+\sigma \Vert s\Vert\bm{I})\bm{s}$.

\noindent\textbf{1.}
\textbf{(1)} In the case $\sigma=0$, consider the gradient function
restricted to the unit ball $\mathcal{B}$
\begin{equation*}
    \nabla p_k: \mathcal{B}\to \nabla p_k(\mathcal{B }),\ \bm s\mapsto \bm g+\bm B\bm s.
\end{equation*}
By $\bm B$ is invertible, such a map is a homeomorphism.
Moreover,
\begin{equation*}
    \Vert\nabla p_k(\bm s)-\bm g\Vert=\Vert\bm B\bm s\Vert\leq \Vert\bm B\Vert\Vert\bm s\Vert=\Vert
    \bm B\Vert,
\end{equation*}
where $\Vert \bm B\Vert=\sup_{\bm v\in\mathbb{R}^n-\{\bm 0\}}\Vert\bm B\bm v\Vert/\Vert\bm v\Vert$, which is the operator norm of matrix $\bm B$. Let $\Delta_0=\Vert \bm B\Vert$, then $\nabla p_k(\mathcal{B})\subseteq\mathcal{B}(\bm g;\Delta_0)$. Consider the set $T_{\mathcal{B}(\bm g;\Delta_0)}$ of tangent lines of $\mathcal{B}(\bm g;\Delta)$ through the origin. Then $T_{\mathcal{B}(\bm g;\Delta_0)}$ forms a cone $C_{\mathcal{B}(\bm g;\Delta_0)}$ where 
\begin{equation*}
    C_{\mathcal{B}(\bm g;\Delta)}=\bigcup\limits_{\bm y\in\mathcal{B}(\bm g;\Delta_0)}{\rm span}\{\bm y\}.
\end{equation*}
Then $\mathcal{S}_{\mathcal{B}}^{(1,0)}=\cup_{\bm y\in\nabla m_k(\mathcal{B})}{\rm span}\{\bm y\}
\subseteq\cup_{\bm y\in\mathcal{B}(\bm g;\Delta_0)}{\rm span}\{\bm y\}=C_{\mathcal{B}(\bm g;\Delta_0)}$.

\noindent\textbf{(2)} In the case $\sigma>0$, let $\mathcal{S}_t=\{\bm s\in\mathbb{R}^n:\Vert\bm s\Vert=t\}$. Though the cubic term $\sigma\Vert\bm s\Vert^3/3$ makes the transformation matrix $(\bm B+\sigma\Vert\bm s\Vert\bm I)$ vary for different $\bm s$, once we fix $\Vert \bm s\Vert=t$ and consider the gradient function $\nabla p_k$ restricted to the sphere, i.e., $\nabla p_k:\mathcal{S}_t\to \nabla p_k(\mathcal{S}_t)$.
Such a restricted map is still a homeomorphism. So for each $t\in(0,1)$, the space $\nabla p_k(\mathcal{S}_t)$ looks like a sphere centering at $\bm g$. Moreover, we have $\Vert\nabla p_k(\bm s)-\bm g\Vert=\Vert(\bm B+\sigma t\bm I)\bm s\Vert\leq \Vert \bm B+\sigma t\bm I\Vert$. Let $\Delta_{\sigma}=\sup_{t\in[0,1)}\Vert \bm B+\sigma t\bm I\Vert$. Then $\nabla p_k(\mathcal{B})\subseteq \mathcal{B}(\bm g;\Delta_{\sigma})$. Define $C_{\mathcal{B}(\bm g;\Delta_{\sigma})}$ the same way as in \textbf{(1)}, then $\mathcal{S}_{\mathcal{B}}^{(1,\sigma)}\subseteq C_{\mathcal{B}(\bm g;\Delta_{\sigma})}$.

\noindent\textbf{2.} Obviously.

\noindent\textbf{3.} We discuss the two cases $\sigma =0$ and $\sigma>0$ separately. In the case where $\sigma=0$, then $\mathcal{S}_{\langle\bm s_1,\bm s_2\rangle}^{(1,0)}={\rm span}\{\bm g,\bm B\bm s_1,\bm B\bm s_2\}\backslash{\rm span}\{\bm B\bm s_1,\bm B\bm s_2\}$. However, if $\sigma>0$, we have 
\begin{equation*}
\mathcal{S}_{\langle\bm s_1,\bm s_2\rangle}^{(1,\sigma)}=\{a\bm g+b\bm B\bm s_{1}+c\bm B\bm s_2+\sigma\sqrt{(\frac{b}{a})^2+(\frac{c}{a})^2}\left(b\bm s_1+c\bm s_2\right):a,b,c\in\mathbb{R},a\neq 0\}\cup\{\bm O\},
\end{equation*}
which is a subset of $\mathrm{span}\{\bm g,\bm B\bm s_1,\bm B\bm s_2,\bm s_1,\bm s_2\}$.

\noindent\textbf{4.} Similarly, we discuss the two cases separately.

\noindent\textbf{(1)} In the case where $\sigma=0$.
$\nabla p_k(\bm{s})=\nabla p_k(t\bm g)=\bm{g}+t\bm{B}\bm{g}$, $t\in\mathbb{R}$. Then for any $\alpha\bm g+\beta\bm B\bm g\in {\rm span}\{\bm g,\bm B\bm g\}$ where $\alpha\neq 0,\beta\in\mathbb{R}$, one can find $a,b\in\mathbb{R}$ $(a\neq 0)$ such that $\alpha\bm g+\beta\bm B\bm g=a(\bm g+b\bm B\bm g)$, i.e., $\alpha\bm g+\beta\bm B\bm g\in{\rm span}\{\bm g+b\bm B\bm g\}$. Therefore ${\rm span}\{\bm g,\bm B\bm g\}\backslash{\rm span}\{\bm B\bm g\}\subseteq\mathcal{S}_l^{(1,0)}\subseteq{\rm span}\{\bm g,\bm B\bm g\}$. Moreover, since $\bm g=\nabla Q_k(\bm 0)\neq \bm 0$, ${\rm span}\{\bm B\bm g\}\notin\mathcal{S}_l^{(1,0)}$. Hence $\mathcal{S}_l^{(1,0)}= {\rm span}\{\bm g,\bm B\bm g\}\backslash{\rm span}\{\bm B\bm g\}$, implying that $\mathcal{S}_l^{(1,0)}\cong \mathbb{R}^2\backslash\mathbb{R}^1$.

\noindent\textbf{(2)} In the case where $\sigma>0$.
\begin{figure}[htbp]
    \centering
    \includegraphics[width=.8\linewidth]{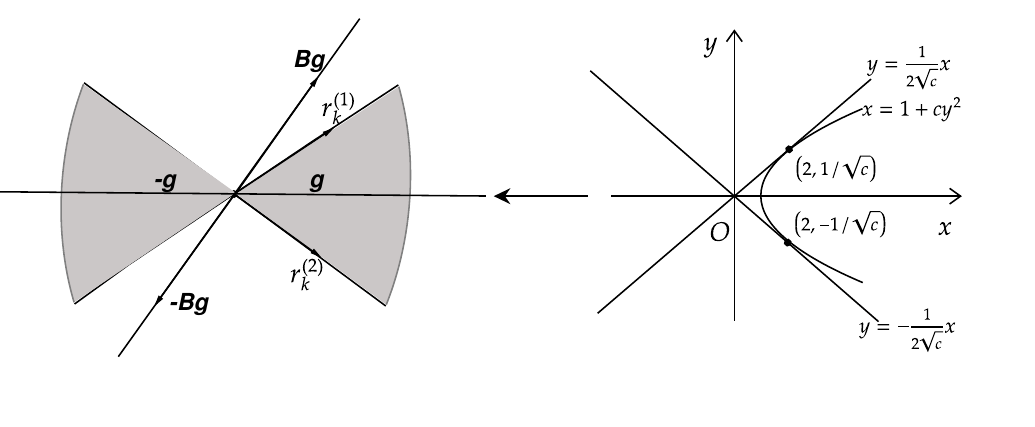}
    %\centering
    \caption{$\mathcal{S}_l^{(1,\sigma)}$ and its corresponding affine plane}
    \label{fig:sector subspace}
\end{figure}
We claim that $\mathcal{S}_l^{(1,\sigma)}$ is the subset of 2-dimensional subspace ${\rm span}\{\bm{g},\bm{Bg}\}$, which is bounded by two lines with the directions $\bm{r}_{k}^{(1)}=2\bm{g}+\bm{Bg}/\sqrt{\sigma \Vert\bm{g}\Vert}$ and $\bm{r}_{k}^{(2)}=2\bm{g}-\bm{Bg}/\sqrt{\sigma \Vert\bm{g}\Vert}$, as illustrated in Figure \ref{fig:sector subspace}.

Write $\bm s\in {\rm span}\{\nabla p_k(\bm 0)\}$ as $\bm s=t\bm g$ where $t\in \mathbb{R}$. Then 
\begin{equation*}\mathcal{S}_l^{(1,\sigma)}=\bigcup\limits_{t\in\mathbb{R}}{\rm span}\{(1+\sigma t^2\Vert \bm g\Vert)\bm g+t\bm{Bg}\}.
\end{equation*}

Establish the affine coordinate system using $\{\bm O,\bm g,\bm B\bm g\}$ as the frame. Let $\bm g$ be the $x$-axis, $\bm B\bm g$ be the $y$-axis and $O$ the origin. By representing each point with its affine coordinates, $\mathcal{S}_l^{(1,\sigma)}$ can be written as
\begin{equation*}
    \mathcal{S}_l^{(1,\sigma)}=\left\{l\left(1+\sigma \Vert \bm g\Vert t^2,t\right):l,t\in\RR\right\}, 
\end{equation*}
which contains the graph of parabola $x=1+cy^2$, where $c=\sigma\Vert\bm g\Vert$. Moreover, each point $(x',y')\in S_l^{(1,\sigma)}$ can be viewed as the intersection of such a parabola and a line through the origin, i.e.,
solution $(x,y)$ of the equation
\begin{equation*}
    \begin{cases}
        x=1+cy^2\\
        y=kx
    \end{cases}
\end{equation*}
for some $k$. Consider equation \eqref{eq: zero dircriminant} of $k$ given by zero discriminant.
\begin{equation}\label{eq: zero dircriminant}
    \frac{1}{c^2k^2}-\frac{4}{c}=0.
\end{equation}
Then the solution $k=1/(2\sqrt{c})$ or $k=-1/(2\sqrt{c})$ gives the expression of the tangents of parabola $y=x/(2\sqrt{c})$ and $y=-x/(2\sqrt{c})$. Let $\bm r_k^{(1)},\bm r_k^{(2)}\in\RR^n$ be the points whose affine coordinates are $(2,1/\sqrt{c})$ and $(2,-1/\sqrt{c})$ respectively. Then we have
\begin{equation*}
    \mathcal{S}_l^{(1,\sigma)}=\{a\bm r_k^{(1)}+b\bm r_k^{(2)}:a,b\in\RR,ab\geq 0\},
\end{equation*}
where $\bm r_k^{(1)}$ and $\bm r_k^{(2)}$ is exactly the same as what we claimed.
\end{proof}

\begin{theorem}\label{thm:sp. gene. by span+cup}
     Let $p_k$ be the model constructed in MD-LAMBO. Then the second type of subspace $\mathcal{F}(p_k)= {\rm span}\{\bigcup_{\bm s\in\mathcal{W}}\nabla p_k(\bm{s})\}$ is

     \begin{enumerate}
         \item $\mathcal{S}_{\mathcal{B}}^{(2,\sigma)}=\mathbb{R}^n$, if $\mathcal{W}=\mathcal{B}(\bm 0;1)\subseteq \mathbb{R}^n$;
         
         \item $\mathcal{S}_{l}^{(2,\sigma)}\cong \mathbb{R}^1$ or $\mathbb{R}^2$, if $\mathcal{W}={\rm span}\{\nabla p_k(\bm 0)\}$.

         \item $\mathcal{S}_{\langle \bm s_1\rangle,\langle\bm s_2\rangle}^{(2,\sigma)}\cong\mathbb{R}^d,\text{ for a }d\leq 5$, if $\mathcal{W}={\rm span}\{\bm s_1\}\cup{\rm span}\{\bm s_2\}$. 
         
         \item $\mathcal{S}_{\langle\bm s_1,\bm s_2\rangle}^{(2,\sigma)}\cong \mathbb{R}^d\text{ for a }d\leq 5$, if $\mathcal{W}={\rm span}\{\bm s_1,\bm s_2\}$.        
     \end{enumerate}
\end{theorem}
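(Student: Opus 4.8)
The plan is to reduce every case to a direct computation with $\nabla p_k$. Writing $Q_k(\bm s)=c+\bm g^\top\bm s+\tfrac12\bm s^\top\bm B\bm s$ with $\bm g=\nabla f(\bm x_k)$ and $\bm B=\nabla^2 f(\bm x_k)$, we have $\nabla p_k(\bm s)=\bm g+\bm B\bm s+\sigma\|\bm s\|\bm s$, a map that is continuous on all of $\mathbb{R}^n$, so the nonsmooth term $\tfrac{\sigma}{3}\|\bm s\|^3$ causes no trouble. Since the second-type subspace is by construction the \emph{linear span} of $\{\nabla p_k(\bm s):\bm s\in\mathcal{W}\}$, in each case it suffices to substitute the parametrization of $\mathcal{W}$ into this formula, read off a short list of vectors whose span contains every $\nabla p_k(\bm s)$ (which gives the dimension bound), and — where an exact description is claimed — exhibit enough gradients to recover that list. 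Because the span is always a genuine linear subspace, the symbol ``$\cong\mathbb{R}^d$'' merely records its dimension.

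For cases 3 and 4 the containment is immediate. With $\mathcal{W}=\mathrm{span}\{\bm s_1,\bm s_2\}$, any $\bm s=a\bm s_1+b\bm s_2$ gives $\nabla p_k(\bm s)=\bm g+a\bm B\bm s_1+b\bm B\bm s_2+\sigma\|a\bm s_1+b\bm s_2\|(a\bm s_1+b\bm s_2)\in\mathrm{span}\{\bm g,\bm B\bm s_1,\bm B\bm s_2,\bm s_1,\bm s_2\}$; with $\mathcal{W}=\mathrm{span}\{\bm s_1\}\cup\mathrm{span}\{\bm s_2\}$ the same five-vector span works a fortiori (here $\|\bm s_i\|=1$ makes the cubic part of $\nabla p_k$ along $t\bm s_i$ equal to $\sigma t|t|\bm s_i$), and when $\sigma=0$ the list shrinks to $\{\bm g,\bm B\bm s_1,\bm B\bm s_2\}$. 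Hence both subspaces have dimension $d\le 5$. For case 2, substituting $\bm s=t\bm g$ shows every gradient lies in $\mathrm{span}\{\bm g,\bm B\bm g\}$; conversely $\bm g=\nabla p_k(\bm 0)$ is already in the span, and since $\nabla p_k(t\bm g)-\bm g=t\bm B\bm g+\sigma t|t|\|\bm g\|\bm g$ for any $t\neq 0$, subtracting the available multiple of $\bm g$ places $\bm B\bm g$ in the span too. Thus $\mathcal{S}_l^{(2,\sigma)}=\mathrm{span}\{\bm g,\bm B\bm g\}$, which is $\mathbb{R}^2$ or $\mathbb{R}^1$ according as $\bm g$ and $\bm B\bm g$ are linearly independent or not.

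Case 1 is the only one requiring a true spanning argument. Evaluating along coordinate directions, $\nabla p_k(\bm 0)=\bm g$ and $\nabla p_k(t\bm e_i)-\bm g=t\bm B\bm e_i+\sigma t|t|\bm e_i$ for $|t|\le 1$. If $\sigma=0$ this equals $t\bm B\bm e_i$, so the span contains $\mathrm{range}(\bm B)=\mathbb{R}^n$ because $\bm B$ is positive definite, hence invertible. If $\sigma>0$, evaluating at two distinct scalars $t_1,t_2\in(0,1]$ yields vectors $t_j\bm B\bm e_i+\sigma t_j^2\bm e_i$ whose $2\times 2$ coefficient matrix has determinant $\sigma t_1t_2(t_2-t_1)\neq 0$, so both $\bm B\bm e_i$ and $\bm e_i$ lie in the span; letting $i$ range over $1,\dots,n$ forces the span to be all of $\mathbb{R}^n$. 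Either way $\mathcal{S}_{\mathcal{B}}^{(2,\sigma)}=\mathbb{R}^n$.

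The only genuine obstacle is the bookkeeping needed to separate the linear term $\bm B\bm s$ from the cubic term $\sigma\|\bm s\|\bm s$ inside a single span. The device is to evaluate $\nabla p_k$ at two positively scaled copies of a direction and invert the resulting $2\times 2$ Vandermonde-type system, which works precisely because $\sigma>0$ and the scalars are distinct and nonzero; in the pure-quadratic case $\sigma=0$ the separation is trivial and one instead leans on invertibility of $\bm B$ for the full-rank conclusion in case 1. Everything else is substitution and dimension counting, and nothing beyond the standing assumptions $\bm g\neq\bm 0$ and $\bm B$ positive definite is used.
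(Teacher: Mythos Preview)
Your proof is correct. For cases 2--4 you follow essentially the paper's strategy: substitute the parametrization of $\mathcal{W}$ into $\nabla p_k$ and read off a finite spanning list. (The paper in fact goes slightly further in cases 3 and 4, establishing the exact equalities $\mathcal{S}_{\langle\bm s_1\rangle,\langle\bm s_2\rangle}^{(2,\sigma)}=\mathcal{S}_{\langle\bm s_1,\bm s_2\rangle}^{(2,\sigma)}=\mathrm{span}\{\bm g,\bm s_1,\bm s_2,\bm B\bm s_1,\bm B\bm s_2\}$ when $\sigma>0$, but since the theorem only asserts $d\le 5$ your containment already suffices.)

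For case 1 you take a genuinely different route. The paper fixes a single radius $t\in(0,1)$, observes that $\bm B+\sigma t\bm I$ is positive definite and hence invertible, and concludes that $n$ linearly independent points on the sphere $\|\bm s\|=t$ are sent to $n$ linearly independent vectors, forcing the span to be $\mathbb{R}^n$. You instead evaluate along each coordinate ray at two distinct radii and invert a $2\times 2$ Vandermonde-type system to extract $\bm e_i$ (and $\bm B\bm e_i$) separately. The paper's argument is marginally shorter and uses the standing positive-definiteness hypothesis uniformly for all $\sigma\ge 0$; your argument is more explicit and, when $\sigma>0$, does not actually need any assumption on $\bm B$ at all, since recovering every $\bm e_i$ already yields $\mathbb{R}^n$.
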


\begin{proof}{Proof}
Suppose $Q_k(\bm{s})=c+\bm{g}^\top \bm{s}+\frac{1}{2}\bm{s}^\top\bm{B}\bm{s}$, then $\nabla p_k(\bm{s})=\bm{g}+(\bm{B}+3\sigma \Vert \bm s\Vert\bm{I})\bm{s}$. 

\noindent\textbf{1.} Fix $\sigma$, by $\bm B$ positive definite, for any $t\in[0,1)$, we have $\bm B+3\sigma t\bm I$ invertible. Therefore there must be $n$ linearly independent $\bm s_1,\cdots,\bm s_n$ such that $\Vert \bm s_i\Vert=t\ (i=1,\cdots,n)$. Let $\bm a_i=(\bm B+3\sigma \Vert \bm s_i\Vert \bm I)\bm s_i$, then $\bm a_i$'s are also linearly independent. Since ${\rm span}\{\cup_{\bm s\in\mathcal{B}}\ \left(\bm{g}+(\bm{B}+3\sigma \Vert \bm s\Vert\bm{I})\bm{s}\right)\}={\rm span}\{\bm g,\cup_{\bm s\in\mathcal{B}}(\bm B+3\sigma \Vert \bm s\Vert \bm I)\bm s\}$, $\mathcal{S}_{\mathcal{B}}^{(2,\sigma)}={\rm span}\{\bm g,\cup_{\bm s\in\mathcal{B}}(\bm B+3\sigma \Vert \bm s\Vert \bm I)\bm s\}$. Therefore $\bm a_i\in\mathcal{S}_{\mathcal{B}}^{(2,\sigma)},\ i=1,\cdots,n$. Hence $\mathcal{S}_{\mathcal{B}}^{(2,\sigma)}=\mathbb{R}^n$.

\noindent\textbf{2.} 
Same argument in \textbf{1} implies $\mathcal{S}_{l}^{(2,\sigma)}={\rm span}\{\bm g, (\bm B+3\sigma \Vert t\bm g\Vert\bm I)t\bm g:t\in\mathbb{R}\}$. Simple observations show that $\mathcal{S}_{l}^{(2,0)}={\rm span}\{\bm g,\bm B\bm g \}$ in both $\sigma=0$ and $\sigma>0$ cases.

\noindent\textbf{3.} 
Since $\mathcal{W}={\rm span}\{\bm s_1\}\cup{\rm span}\{\bm s_2\}$, $\mathcal{S}_{\langle \bm s_1\rangle,\langle \bm s_2\rangle}^{(2,\sigma)}={\rm span}\{\bm g,\cup_{\bm s\in\mathcal{W}}(\bm B+3\sigma \Vert \bm s\Vert \bm I)\bm s\}$, in the case where $\sigma=0$,it is easy to see $\mathcal{S}_{\langle \bm s_1\rangle,\langle \bm s_2\rangle}^{(2,0)}={\rm span}\{\bm g,\bm B\bm s_1,\bm B\bm s_2\}$. If $\sigma>0$, otherwise, this implies $\mathcal{S}_{\langle \bm s_1\rangle,\langle \bm s_2\rangle}^{(2,\sigma)}\subseteq{\rm span}\{\bm g,\bm s_1,\bm s_2,\bm B\bm s_1,\bm B\bm s_2\}$. Moreover, since $\bm s_1, \bm s_2\in\mathcal{S}_{\langle \bm s_1\rangle,\langle \bm s_2\rangle}^{(2,\sigma)}$, $\bm B\bm s_1,\bm B\bm s_2\in\mathcal{S}_{\langle \bm s_1\rangle,\langle \bm s_2\rangle}^{(2,\sigma)}$, which is followed by $\mathcal{S}_{\langle \bm s_1\rangle,\langle \bm s_2\rangle}^{(2,\sigma)}={\rm span}\{\bm g,\bm s_1,\bm s_2,\bm B\bm s_1,\bm B\bm s_2\}$.

\noindent\textbf{4.} In the case where $\mathcal{W}={\rm span}\{\bm s_1,\bm s_2\}$, by the same argument in {\color{black}\textbf{3}}, we have $\mathcal{S}_{\langle \bm s_1,\bm s_2\rangle}^{(2,0)}={\rm span}\{\bm g,\bm B\bm s_1,\bm B\bm s_2\}$ and $\mathcal{S}_{\langle \bm s_1,\bm s_2\rangle}^{(2,\sigma)}={\rm span}\{\bm g,\bm s_1,\bm s_2,\bm B\bm s_1,\bm B\bm s_2\}$.
\end{proof}

By Table \ref{tab:subspace-1} and \ref{tab:subspace-2}, model-driven subspaces diverge from common linear subspaces (classic subspaces) to subsets of linear subspaces (truncated subspaces). For both types of subspaces, dimension is an important concept since it can distinguish the space complexity of different subspaces and determine the complexity of solving a trust-region subproblem inside.

Therefore, for (model-driven) truncated subspaces, we still want to talk about their dimension, and for this reason, we give the Definition \ref{def:dimension of subspace}. We write $\dim(\mathcal{L})$ denote the dimension of $\mathcal{L}$ when $\mathcal{L}$ is a linear subspace of $\mathbb{R}^n$.

\begin{definition}\label{def:dimension of subspace}
    For truncated subspace $\mathcal{S}$, let $\mathcal{M}=\{\bm x+\mathcal{L}:\mathcal{S}\subseteq \bm x+\mathcal{L} \}$ be the collection of linear manifolds that includes $\mathcal{S}$. Then the dimension of $\mathcal{S}$, denoted by $\dim(\mathcal{S})$, is defined by 
    \begin{equation*}
        \dim(\mathcal{S})=\min_{\bm x+\mathcal{L}\in\mathcal{M}}\dim(\mathcal{L}).
    \end{equation*}
\end{definition}

A simple observation shows that when the truncated subspace $\mathcal{S}$ is exactly a linear subspace, its dimension is exactly its dimension as a linear subspace. In this sense, we believe this term is well defined. By Table \ref{tab:subspace-2} and the proof of Theorem \ref{thm:sp. gene. by span+cup}, dimensions of the second type of subspaces are obvious since they are all linear subspaces of $\mathbb{R}^n$. We carefully examine the dimensionality of $\mathcal{S}_{\mathcal{B}}^{(1,\sigma)}$ in the following Proposition \ref{prop: dim of sp1}.
\begin{proposition}\label{prop: dim of sp1}
For $\sigma\geq 0$, we have
    \begin{equation*}  \dim(\mathcal{S}_{\mathcal{B}}^{(1,\sigma)})=n.
    \end{equation*}
\end{proposition}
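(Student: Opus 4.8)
The plan is to use Definition~\ref{def:dimension of subspace} to reduce the statement to a purely linear-algebraic claim: the smallest linear subspace containing all the lines ${\rm span}\{\nabla p_k(\bm s)\}$, $\bm s\in\mathcal B(\bm 0;1)$ — equivalently the linear span ${\rm span}\{\nabla p_k(\bm s):\bm s\in\mathcal B(\bm 0;1)\}$ — equals $\RR^n$. The reduction itself is short: the upper bound $\dim(\mathcal S_{\mathcal B}^{(1,\sigma)})\le n$ is immediate since $\mathcal S_{\mathcal B}^{(1,\sigma)}\subseteq\RR^n$; and for the lower bound, since $\bm x_k\in\mathcal S_{\mathcal B}^{(1,\sigma)}$ (every $\mathrm{span}\{\nabla p_k(\bm s)\}$ contains $\bm 0$), any linear manifold $\bm x+\mathcal L$ containing $\mathcal S_{\mathcal B}^{(1,\sigma)}$ must pass through $\bm x_k$, hence equal $\bm x_k+\mathcal L$ with $\mathcal L\supseteq\bigcup_{\bm s}{\rm span}\{\nabla p_k(\bm s)\}$, i.e.\ $\mathcal L\supseteq{\rm span}\{\nabla p_k(\bm s):\bm s\in\mathcal B(\bm 0;1)\}$. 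Thus $\dim(\mathcal S_{\mathcal B}^{(1,\sigma)})=\dim{\rm span}\{\nabla p_k(\bm s):\bm s\in\mathcal B(\bm 0;1)\}$, and it remains only to show this is $n$.

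To show the span is full, I would not use the whole ball but just a single sphere of radius $t\in(0,1)$. Using $\nabla p_k(\bm s)=\bm g+(\bm B+\sigma\|\bm s\|\bm I)\bm s$ from the proof of Theorem~\ref{thm: sp. gene. by cup+span}, on the sphere $\{\|\bm s\|=t\}$ the coefficient $\sigma\|\bm s\|$ is the constant $\sigma t$, so the set
\[
N:=\{\nabla p_k(\bm s):\|\bm s\|=t\}=\bm g+(\bm B+\sigma t\bm I)\,\{\bm s:\|\bm s\|=t\}
\]
is the image of the Euclidean $t$-sphere under the affine map $\bm s\mapsto\bm g+(\bm B+\sigma t\bm I)\bm s$, which is invertible because $\bm B$ is positive definite and $\sigma t\ge 0$. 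An invertible affine map sends affine hull to affine hull, and the affine hull of the round $t$-sphere is all of $\RR^n$; hence $\mathrm{aff}(N)=\RR^n$. Fixing $\bm v_0\in N$, the differences $\{\bm v-\bm v_0:\bm v\in N\}$ then span $\RR^n$, and since $N\subseteq{\rm span}(N)$ these differences lie in ${\rm span}(N)$, giving ${\rm span}(N)=\RR^n$. As $N\subseteq\{\nabla p_k(\bm s):\bm s\in\mathcal B(\bm 0;1)\}$, this yields ${\rm span}\{\nabla p_k(\bm s):\bm s\in\mathcal B(\bm 0;1)\}=\RR^n$, and the proposition follows. The case $\sigma=0$ is the same argument with $\bm B+\sigma t\bm I$ replaced by $\bm B$.

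The step I expect to be the main obstacle — conceptually, not technically — is recognizing that despite Theorem~\ref{thm: sp. gene. by cup+span} only locating $\mathcal S_{\mathcal B}^{(1,\sigma)}$ inside the narrow-looking cone $C_{\mathcal B(\bm g;\Delta_\sigma)}$, that cone is genuinely $n$-dimensional: it is the cone over the $(n-1)$-dimensional ellipsoidal sphere $N$, which does not lie in any affine hyperplane. Turning this picture into a proof requires being careful about two small points — that a nonempty set whose affine hull is $\RR^n$ automatically has linear span $\RR^n$, and that Definition~\ref{def:dimension of subspace} is applied with the base point $\bm x_k$ correctly accounted for — but beyond those, everything (invertibility of $\bm B+\sigma t\bm I$, invariance of affine hull under invertible affine maps) is routine.
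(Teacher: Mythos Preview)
Your proof is correct. Both you and the paper reduce to showing that the set $\{\nabla p_k(\bm s):\bm s\in\mathcal B(\bm 0;1)\}$ cannot sit inside a proper affine subspace, and both exploit that on a sphere of fixed radius $t$ the gradient map becomes the invertible affine map $\bm s\mapsto\bm g+(\bm B+\sigma t\bm I)\bm s$. The difference is in how full dimension is concluded. The paper argues topologically and splits cases: for $\sigma=0$ it uses that $\nabla p_k$ is a homeomorphism on the ball, so its image contains an open ball $\mathcal B(\bm g;\delta)$ from which one extracts $\bm g$ and $n$ points $\bm g+\bm s_i$ with the $\bm s_i$ linearly independent; for $\sigma>0$ it extends the sphere map to a homeomorphism $h$ on the closed $t$-ball, repeats the open-ball argument there, and then rescales the resulting vectors back onto the sphere. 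Your route is purely affine-geometric and unified across both cases: the $t$-sphere has affine hull $\RR^n$, invertible affine maps preserve affine hulls, and any nonempty set with affine hull $\RR^n$ automatically has linear span $\RR^n$. This avoids the case split and the homeomorphism/open-map detour entirely, and your reduction via Definition~\ref{def:dimension of subspace} (translate any containing manifold to $\bm x_k+\mathcal L$, forcing $\mathcal L\supseteq{\rm span}\{\nabla p_k(\bm s)\}$) is likewise crisper than the paper's coordinate-level manipulation of $\bm g-\bm x$ and $\bm g+\bm s_i-\bm x$.
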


\begin{proof}{Proof}
    According to the proof of Theorem \ref{thm: sp. gene. by cup+span}.1, when $\sigma=0$, $\nabla p_k$ is a homeomorphism when $\sigma=0$, hence there must be $\delta>0$ such that $\mathcal{B}(\bm g;\delta)\subseteq \nabla p_k(\mathcal{B})$ (therefore $\mathcal{B}(\bm g;\delta)\subseteq\mathcal{S}_{\mathcal{B}}^{(1,0)}$), which implies that $\mathcal{S}_{\mathcal{B}}^{(1,0)}$ contains $n$ vectors $\bm g+\bm s_1,\cdots,\bm g+\bm s_n\in \mathcal{B}(\bm g;\delta)$ such that $\bm s_1,\cdots,\bm s_n$ are linearly independent. Let $\bm x+\mathcal{L}$ be an arbitrary linear manifold that contains $\mathcal{S}_{\mathcal{B}}^{(1,0)}$. Then $\mathcal{S}_{\mathcal{B}}^{(1,0)}-\bm x$ is contained in the linear subspace $\mathcal{L}$. Moreover, by $\bm g\in\mathcal{B}(\bm g;\delta)$, we have $\bm g-\bm x,\ \bm g+\bm s_i-\bm x\in\mathcal{L},\ \forall i\in[n]$. By the linearity of $\mathcal{L}$, $\bm s_1,\cdots,\bm s_n\in\mathcal{L}$, which implies that $\dim(\mathcal{L})=n$. The result is followed by Definition \ref{def:dimension of subspace}. 
    
    When $\sigma>0$, $\nabla p_k$ is homeomorphism when restricted to the sphere $\mathcal{S}_t\subseteq \mathcal{B}(\bm 0;1)$ where $t\in(0,1)$. Extend the restricted function $\nabla p_k:\mathcal{S}_t\to\nabla p_k(\mathcal{S}_t), s\mapsto \bm g+(\bm B+\sigma t\bm I)\bm s$ to function $h: \overline{\mathcal{B}(\bm 0;t)}\to h(\overline{\mathcal{B}(\bm 0;t)}), \bm s\mapsto \bm g+(\bm B+\sigma t\bm I)\bm s$. Then it is easy to verify that $h$ is still a homeomorphism. By the argument in the case $\sigma=0$, we can find $n$ linearly independent vectors $\bm y_1,\cdots,\bm y_n$ such that $\bm g+\bm y_i\in h(\overline{\mathcal{B}(\bm 0;t)})$. Let $\bm y_i'=t\bm y_i/\Vert (\bm B+\sigma t\bm I)^{-1}\bm y_i\Vert$, then $\bm y_i'$ is a proper scaling of $\bm y_i$ such that $\bm y_i'\in\mathcal{S}_t$ and $h(\bm y_i')=\nabla p_k(\bm y_i')\in\mathcal{S}_{\mathcal{B}}^{(1,\sigma)}$. Since $\bm y_i'$ are linearly independent, let $\bm s_i'=(\bm B+\sigma t\bm I)\bm y_i'$, then $\bm s_i'$ are also linearly independent while $\bm g,\ \bm g+\bm s_i'\in\mathcal{S}_{\mathcal{B}}^{(1,\sigma)}$. Apply the same argument as in the case $\sigma=0$, the result follows.
\end{proof}

 We give the results of other first-type subspaces in Table \ref{tab:dim of subspaces} without proof details, which is straightforward.

\begin{table}[htbp]%用H会报错
    \centering
    \caption{Dimension of the first type of subspaces}
    \centering
    \begin{tabular}{m{1cm}<{\centering}m{2cm}
    <{\centering}m{2cm}
    <{\centering}m{2cm}<{\centering}m{2.2cm}<{\centering}}
    \toprule
     % $\dim (\mathcal{S})$
     & $\dim (\mathcal{S}_{\mathcal{B}}^{(1,\sigma)})$ & $\dim (\mathcal{S}_{l}^{(1,\sigma)})$ & $\dim (\mathcal{S}_{\bm s_1,\bm s_2}^{(1,\sigma)})$ & $\dim (\mathcal{S}_{\langle\bm s_1,\bm s_2\rangle}^{(1,\sigma)})$
     \\
    \midrule
        $\sigma=0$  & $n$ & 2 & $\in\{1,2\}$ & 3  \\
        
        % \hline

        $\sigma>0$ & $n$ & 2 & $\in\{1,2\}$ & $\in\{1,2,3,4,5\}$ \\
        
        % \hline
    \bottomrule

    \end{tabular}   
    \label{tab:dim of subspaces}
\end{table}

\subsection{Subproblem on truncated subspaces}\label{section: Trust region subproblem on subspace}

In this section, we introduce our strategy for solving the subproblem in our truncated subspaces, and we especially analyze the effect of the algorithm on the two-dimensional truncated subspace
$\mathcal{S}_l^{(1,\sigma)}$. Throughout this section, by subspaces, we always refer to the projected subspace $\hat{\mathcal{S}_k}$ unless it is specially noted.

To be specific, suppose $\hat{\mathcal{S}_k}$ is an $q$-dimensional subspace, in solving the subproblem, classic methods consider finding a point $\bm y^*$ such that
\begin{equation}\label{prob: fin min of ARC}
    m_k(\bm y^*)\approx\arg\min_{\bm y\in\mathbb{R}^q}m_k(\bm y)=f(\bm x_k)+\bm{g}_k^\top\bm{y}+\frac{1}{2}\bm{y}^\top \bm{B}_k\bm{y}+\frac{\sigma_k}{3}\Vert \bm{y}\Vert^3,
\end{equation}
where $\bm g_k=\bm P_k^\top\nabla f(\bm x_k)$ and $\bm B_k=\bm P_k^\top\nabla^2f(\bm x_k)\bm P_k$. By \citet{CartisGouldToint2011}, Problem \eqref{prob: fin min of ARC} is equivalent to the following trust-region subproblem \eqref{prob: standard TRS} when $\sigma_k=c_k/\Delta_k$ for some constant $c_k$.
\begin{equation}\label{prob: standard TRS}
   \begin{aligned}
     \min_{\bm y\in\mathbb{R}^q}\ &m_k(\bm y)=f(\bm x_k)+\bm{g}_k^\top\bm{y}+\frac{1}{2}\bm{y}^\top \bm{B}_k\bm{y}\\
    \text{subject to}\ & \bm \Vert \bm y\Vert
    \leq \Delta_k. 
\end{aligned} 
\end{equation}
Therefore, to simplify the discussion, we adopt the above trust-region description of the subproblem and refer $m_k(\bm y)$ to the quadratic model $f(\bm x_k)+\bm{g}_k^\top\bm{y}+\frac{1}{2}\bm{y}^\top \bm{B}_k\bm{y}$ for the rest of this section. For the rest of this paper, we will call such $m_k$ as ``quadratic $m_k$" while the one with $\sigma_k\Vert \bm y\Vert^3/3$ term is called ``cubic $m_k$".

To solve the unconstrained version of problem \eqref{prob: standard TRS}, classic conjugate gradient method generates a sequence $\{\bm y_{k,i}\}_{i\geq0}$ at the $k$th iteration, where
\begin{equation}\label{eq: y_k and d_k}
    \begin{aligned}
    \bm{y}_{k,i+1}&=\bm{y}_{k,i}+\alpha_i \bm{d}_i,\\
    \bm{d}_{i+1}&=-\bm{g}_{k,i+1}+\beta_i \bm{d}_i,
    \end{aligned}
\end{equation}
\begin{equation}\label{eq: g_k}
    \bm{g}_{k,i+1}=\nabla \hat{m_k}(\bm{y}_{k,i+1})=\bm{B}_k\bm{y}_{k,i+1}+\bm{g}_k
\end{equation} 
 and 
\begin{equation}\label{eq: alpha and beta}
    \alpha_i=-\frac{\bm{g}_{k,i}^\top\bm{d_i}}{\bm{d}^\top_i\bm{B}_k\bm{d}_i},\ \beta_i=\frac{\Vert \bm{g}_{k,i+1}\Vert^2}{\Vert \bm{g}_{k,i}\Vert^2}. 
\end{equation}
At each inner iteration, this method searches for the next point along the direction that is conjugate to the last one. Providing the problem is quadratic, this method guarantees termination within $q$ steps when solving the unconstrained problem, provided the problem is $q$-dimensional. For the 2-dimensional subspace (meaning the problem is also 2-dimensional), especially, we calculate the first two inner iterates and find they lie exactly within our 2-dimensional truncated subspace $\hat{\mathcal{S}}_{l}^{(1,\sigma)}$ in some cases. We state and prove this result in Lemma \ref{lem: y_1y_2 in S}.

\begin{lemma}\label{lem: y_1y_2 in S}
Let $\bm y_{k,0}=\bm 0$ ,$\bm d_0=-\bm g_k$. If $\nabla^2 f(\bm x_k)$ is positive definite and 
\[
\frac{\Vert \bm{g}_k\Vert^2}{\bm{g}_k^\top\bm{B}_k\bm{g}_k}\leq\frac{1}{2\sqrt{\sigma\Vert\bm{g}_k\Vert}},
\] 
then $\bm y_{k,1},\bm y_{k,2}\in \hat{\mathcal{S}}_l^{(1,\sigma)}$. 
\end{lemma}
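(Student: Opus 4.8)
The plan is to compute the first two conjugate-gradient inner iterates $\bm y_{k,1},\bm y_{k,2}$ explicitly from \eqref{eq: y_k and d_k}–\eqref{eq: alpha and beta} and to compare them with the closed-form sector description of $\mathcal{S}_l^{(1,\sigma)}$ furnished by Theorem~\ref{thm: sp. gene. by cup+span}.4. Write $\bm g=\nabla f(\bm x_k)$, $\bm B=\nabla^2 f(\bm x_k)$, and $c=\sigma\|\bm g\|$. By Theorem~\ref{thm: sp. gene. by cup+span}.4, $\mathcal{S}_l^{(1,\sigma)}$ is the planar sector of $\mathrm{span}\{\bm g,\bm B\bm g\}$ bounded by the two directions $2\bm g\pm\bm B\bm g/\sqrt c$; concretely, $x\bm g+y\bm B\bm g\in\mathcal{S}_l^{(1,\sigma)}$ exactly when $2\sqrt c\,|y|\le|x|$. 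Because the leftmost Gram–Schmidt vector is $\bm a_k^{(1)}=\bm g/\|\bm g\|$ and $\dim(\mathcal{S}_l^{(1,\sigma)})=2$ (Table~\ref{tab:dim of subspaces}), the columns of $\bm P_k$ span precisely $\mathrm{span}\{\bm g,\bm B\bm g\}$. I would first record the ensuing identities: $\|\bm g_k\|=\|\bm g\|$ (so $\sqrt{\sigma\|\bm g_k\|}=\sqrt c$), $\bm P_k\bm g_k=\bm g$, hence $\bm B_k\bm g_k=\bm P_k^\top\bm B\bm P_k\bm g_k=\bm P_k^\top\bm B\bm g$, and $\bm B_k$ is positive definite since $\nabla^2 f(\bm x_k)$ is. Applying $\bm P_k^\top$ to the sector description then shows $\hat{\mathcal{S}}_l^{(1,\sigma)}=\{x\bm g_k+y\bm B_k\bm g_k:2\sqrt c\,|y|\le|x|\}$, i.e. $\bm g_k$ and $\bm B_k\bm g_k$ act as the two coordinate axes of the projected sector.

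Next I would unwind the recursion from $\bm y_{k,0}=\bm 0$, $\bm d_0=-\bm g_k$. This gives $\alpha_0=\mu:=\|\bm g_k\|^2/(\bm g_k^\top\bm B_k\bm g_k)>0$ and $\bm y_{k,1}=-\mu\bm g_k$, whose coordinates $(x,y)=(-\mu,0)$ trivially satisfy $2\sqrt c\,|y|\le|x|$, so $\bm y_{k,1}\in\hat{\mathcal{S}}_l^{(1,\sigma)}$ with no assumption needed. For the second step one finds $\bm g_{k,1}=\bm g_k-\mu\bm B_k\bm g_k$; the very choice of $\mu$ yields $\bm g_{k,1}^\top\bm d_0=\mu\,\bm g_k^\top\bm B_k\bm g_k-\|\bm g_k\|^2=0$, so with $\beta_0=\|\bm g_{k,1}\|^2/\|\bm g_k\|^2\ge0$ one gets $\bm d_1=\mu\bm B_k\bm g_k-(1+\beta_0)\bm g_k$, then $\bm g_{k,1}^\top\bm d_1=-\|\bm g_{k,1}\|^2$ and $\alpha_1=\|\bm g_{k,1}\|^2/(\bm d_1^\top\bm B_k\bm d_1)\ge0$ (if $\bm g_{k,1}=\bm 0$ the subproblem is already solved and $\bm y_{k,2}=\bm y_{k,1}$). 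Substituting back,
\[
\bm y_{k,2}=\bm y_{k,1}+\alpha_1\bm d_1=-\bigl(\mu+\alpha_1(1+\beta_0)\bigr)\bm g_k+\alpha_1\mu\,\bm B_k\bm g_k,
\]
so $\bm y_{k,2}$ has coordinates $(x,y)=\bigl(-(\mu+\alpha_1(1+\beta_0)),\,\alpha_1\mu\bigr)$ with $x\le0\le y$.

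Finally I would check the sector inequality. The hypothesis of the lemma is exactly $\mu\le 1/(2\sqrt c)$, i.e. $2\sqrt c\,\mu\le1$; hence, using $\mu,\alpha_1,\beta_0\ge0$,
\[
2\sqrt c\,|y|=2\sqrt c\,\mu\,\alpha_1\le\alpha_1\le\alpha_1(1+\beta_0)\le\mu+\alpha_1(1+\beta_0)=|x|,
\]
which places $\bm y_{k,2}$ in $\hat{\mathcal{S}}_l^{(1,\sigma)}$ and finishes the proof. I expect the main obstacle to be the projection bookkeeping in the first paragraph — verifying that $\mathrm{range}(\bm P_k)=\mathrm{span}\{\bm g,\bm B\bm g\}$, so that $\bm B_k\bm g_k$ genuinely represents $\bm B\bm g$ in the projected coordinates rather than only its orthogonal projection, together with keeping the signs straight so that $|x|=\mu+\alpha_1(1+\beta_0)$; once these are settled, the closing inequality chain is an immediate consequence of the stated bound.
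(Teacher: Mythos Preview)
Your proposal is correct and follows essentially the same route as the paper: compute the CG iterates in the affine coordinates $(\bm g_k,\bm B_k\bm g_k)$ and verify the sector bound $2\sqrt{\sigma\|\bm g_k\|}\,|y|\le|x|$ from Theorem~\ref{thm: sp. gene. by cup+span}.4. The only cosmetic difference is that the paper partitions $\hat{\mathcal S}_l^{(1,\sigma)}$ into four cones, shows $\bm y_{k,1}$ and $\bm d_1$ both lie in the same cone $\mathcal S_k^{(2)}$, and then uses closure under nonnegative combinations, whereas you compute the coordinates of $\bm y_{k,2}$ directly and check the inequality in one line; your version is marginally more streamlined but the argument is the same.
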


\begin{figure}[H]
    \centering
    \includegraphics[width=0.8\linewidth]{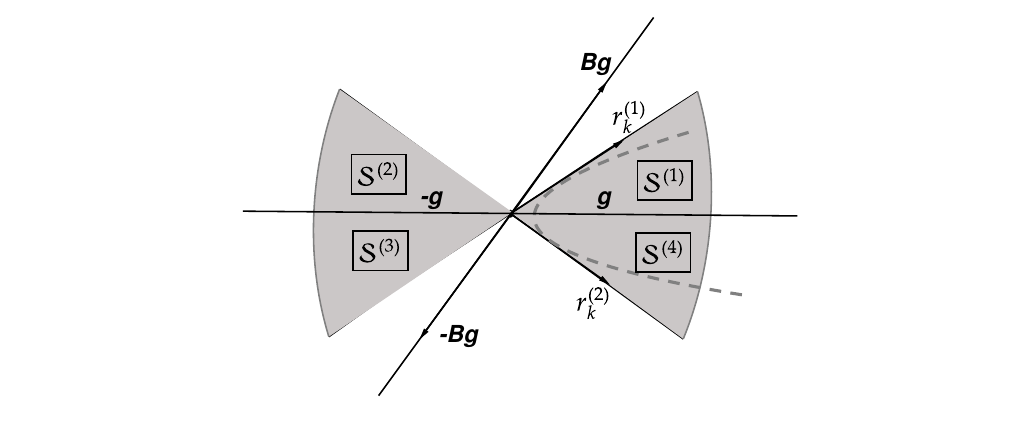}
    \caption{Partition of subspace $\hat{\mathcal{S}}_{l}^{(1,\sigma)}$ %{\color{black}Needs more descriptions}
    }
    \label{fig:parts of subspace S}
\end{figure}

\begin{proof}{Proof}
Since $\hat{\mathcal{S}}_l^{(1,\sigma)}\subseteq\hat{\mathcal{S}}_l^{(1,0)}$, it suffices to prove the result for $\sigma>0$.

First of all, since ${\rm span} \{\bm{g}_k\}\subseteq \hat{\mathcal{S}}_l^{(1,\sigma)}$ and
    \[
    \bm{y}_{k,1}=\bm{y}_{k,0}+\alpha_0 \bm{d}_0=\frac{\Vert \bm{g}_k\Vert^2}{\bm{g}_k^\top\bm{B}_k\bm{g}_k}(-\bm{g}_k), 
    \] 
    we know that $\bm y_{k,1}\in \hat{\mathcal{S}}_l^{(1,\sigma)}$. By $\bm B_k=\bm P_k^\top\nabla^2f(\bm x_k)\bm P_k$, $\nabla^2f(\bm x_k)$ is positive definite implies that $\bm B_k$ is also positive definite.

Establish the affine coordinated system using $\{\bm O,\bm g_k,\bm B_k\bm g_k\}$ as the frame. Let $\bm g_k$ be the $x$-axis and $\bm B_k\bm g_k$ be the $y$-axis and $\bm O$ be the origin.
See Figure \ref{fig:parts of subspace S}. By the proof of Theorem \ref{thm: sp. gene. by cup+span}, $\hat{\mathcal{S}}_l^{(1,\sigma)}$ are bounded by lines (of directions) $\hat{r}_k^{(1)}$ and $\hat{r}_k^{(2)}$ where 
$$\hat{r}_k^{(1)}:\ y=\frac{1}{2\sqrt{\sigma \Vert\bm g_k\Vert}}x,\text{ and }\hat{r}_k^{(2)}:\  y=-\frac{1}{2\sqrt{\sigma \Vert\bm g_k\Vert}}x.$$
Let $\mathcal{S}_k^{(1)}$ be the subregion of $\hat{\mathcal{S}}_l^{(1,\sigma)}$ which includes all positive linear combinations of $\bm g_k$ and $\hat{r}_k^{(1)}$, $\mathcal{S}_k^{(2)}$ be the subregion which consists of all positive linear combinations of $-\bm g_k$ and $-\hat{r}_k^{(2)}$, $\mathcal{S}_k^{(3)}$ be the subregion of consists of all positive linear combinations of $-\bm g_k$  and $-\hat{r}_k^{(1)}$, and finally, let $\mathcal{S}_k^{(4)}$ be the subregion of $\hat{\mathcal{S}}_l^{(1,\sigma)}$ which consists of all positive linear combinations of $\bm g_k$ and $\hat{r}_k^{(2)}$.

Through this way, we have divided $\hat{\mathcal{S}}_l^{(1,\sigma)}$ into $\hat{\mathcal{S}}_l^{(1,\sigma)}=\cup_{i=1}^{4}\mathcal{S}_k^{(i)}$ where in the affine coordinate system
\begin{equation*}
    \begin{aligned}
        \mathcal{S}_k^{(1)}&=\{(x,y):x\geq0,0\leq y\leq \frac{1}{2\sqrt{\sigma \Vert\bm g_k\Vert}} x\},\
        \mathcal{S}_k^{(2)}=\{(x,y):x\leq 0,0\leq y\leq -\frac{1}{2\sqrt{\sigma \Vert\bm g_k\Vert}} x\},\\
        \mathcal{S}_k^{(3)}&=\{(x,y):x\leq0,\frac{1}{2\sqrt{\sigma \Vert\bm g_k\Vert}} x\leq y\leq 0\},\
        \mathcal{S}_k^{(4)}=\{(x,y):x\geq0,-\frac{1}{2\sqrt{\sigma \Vert\bm g_k\Vert}} x\leq y\leq 0\}.\\
    \end{aligned}
\end{equation*}

Then the linear combination of any two points that lie in the same $\mathcal{S}_k^{(i)}$ still belongs to this part. In the following paragraphs, we will use this property to deduce $\bm y_{k,2}\in\mathcal{S}_k^{(2)}$ from $\bm y_{k,1},\bm d_1\in\mathcal{S}_k^{(2)}$, which completes the proof.

    According to the definition of $\bm g_{k,i}$, $\bm d_i$, $\alpha_i$ and $\beta_i$,   
    \begin{equation}\label{eq: g1,beta0,d1 in lemma}
    \begin{aligned}
        \bm{g}_{k,1}&=\nabla m_k(\bm{y}_{k,1})=\bm{B}_k\bm{y}_1+\bm{g}_k=-\alpha_0\bm{B}_k\bm{g}_k+\bm{g}_k=(1,-\alpha_0),\\       \beta_0&=\frac{\Vert\bm{g}_{k,1}\Vert^2}{\Vert\bm{g}_k\Vert^2}, 
        \bm{d}_1=-\bm{g}_{k,1}+\beta_0\bm{d}_0=-(1+\beta_0)\bm{g}_k+\alpha_0\bm{B}_k\bm{g}_k=(-(1+\beta_0),\alpha_0).
    \end{aligned}
    \end{equation}
    Since
    \[
    \alpha_0=\frac{\Vert \bm{g}_k\Vert^2}{\bm{g}_k^\top\bm{B}_k\bm{g}_k}\leq\frac{1}{2\sqrt{\sigma\Vert\bm{g}_k\Vert}},
    \]
    we have $\bm g_{k,1}=(1,-\alpha_0)\in\mathcal{S}_k^{(4)}$. By \eqref{eq: g1,beta0,d1 in lemma}, $\beta_0\geq 0$. Therefore
    \begin{equation*}
        \alpha_0\leq\frac{1}{2\sqrt{\sigma \Vert\bm g_k\Vert}}\leq \frac{(1+\beta_0)}{2\sqrt{\sigma \Vert\bm g_k\Vert}},
    \end{equation*}
which implies $\bm d_1=\left(-(1+\beta_0),\alpha_0\right)\in\mathcal{S}_k^{(2)}$.

    Finally, since 
    $\alpha_1=-\bm{g}_{k,1}^\top \bm{d}_1/(\bm{d}_1\bm{B}_k\bm{d}_1)$, where 
    \begin{equation*}
        -\bm{g}_{k,1}^\top \bm{d}_1=-\bm{g}_{k,1}^\top(-\bm{g}_{k,1}+\beta_0\bm{d}_0)=\Vert \bm{g}_{k,1}\Vert^2+0\geq 0, 
    \end{equation*}
    we have $\alpha_1\geq 0$, therefore $\alpha_1\bm d_1\in\mathcal{S}_k^{(2)}$. Together with the fact that $\bm y_{k,1}=-\alpha_0\bm g_k=(-\alpha_0,0)\in\mathcal{S}_k^{(2)}$, we can deduce that $\bm y_{k,2}=\bm y_{k,1}+\alpha_1\bm d_1\in\mathcal{S}_k^{(2)}\subseteq \hat{\mathcal{S}}_l^{(1,\sigma)}$.    
        
\end{proof}

Under the conditions of Lemma \ref{lem: y_1y_2 in S}, in order to find the minimizer of $\hat{m_k}$, which belongs to $\{\bm y_{k,1},\bm y_{k,2}\}$, it suffices to search the truncated subspace $\mathcal{S}_l^{(1,\sigma)}$ instead of the complete linear subspace $\mathbb{R}^2$. This inspires and justifies us to find a solution within the truncated subspace under the truncation settings. This means that we are actually considering solving the following ``reduced" trust-region subproblem \eqref{prob:TRS on S} of quadratic model $m_k$ on the truncated $q$-dimensional subspace $\hat{\mathcal{S}}_k$.
\begin{equation}\label{prob:TRS on S}
\begin{aligned}
    \min_{\bm y\in\mathbb{R}^q}\ &m_k(\bm y)=f(\bm x_k)+\bm{g}_k^\top\bm{y}+\frac{1}{2}\bm{y}^\top \bm{B}_k\bm{y}\\
    \text{subject to}\ & \Vert \bm y\Vert\leq \Delta_k\text{ and }\bm y\in\hat{\mathcal{S}}_k. 
\end{aligned}    
\end{equation}
Based on the classic truncated conjugate gradient method (see Algorithm 1 in \citet{2000On}), we designed Algorithm \ref{algo:tcg with radius, steps form} to deal with extra boundary conditions and to solve problem \eqref{prob:TRS on S}.
\begin{algorithm}
\caption{STCG}
\begin{algorithmic}[1]
    \State \textbf{Step 0:} Input subspace $\hat{\mathcal{S}}_k$, $\bm y_{k,0}=\bm{0}, \bm g_{k,0}=\bm{g}_k, \bm d_0=-\bm{g}_k\in\mathbb{R}^q$ where $q=\dim(\hat{\mathcal{S}}_k)$, trust-region radius $\Delta_k$, $i=0$.

    \State \textbf{Step 1:} If $\Vert \bm g_{k,i}\Vert=0$, output $\bm y_{k,i}$ and stop. Compute $\bm{d}_i^\top\bm{B}_k\bm{d}_i$, if $\bm{d}_i^\top\bm{B}_k\bm{d}_i\leq 0$, go to \textbf{Step 4}; Compute $\alpha_i$ according to \ref{eq: alpha and beta}.

    \State \textbf{Step 2:} If $\bm y_{k,i}+\alpha_i\bm d_i\notin\hat{\mathcal{S}}_k$, go to \textbf{Step 5}.

    \State \textbf{Step 3:} If $\Vert \bm y_{k,i}+\alpha_i\bm d_i\Vert>\Delta_k$, go to \textbf{Step 4}; Set $\bm y_{k,i+1}$ and $\bm g_{k,i+1}$ by \ref{eq: y_k and d_k} and \ref{eq: g_k}. Compute $\beta_i$ by \ref{eq: alpha and beta} and set $\bm d_{i+1}$ according to \eqref{eq: y_k and d_k}.
    Increment $i$ by 1 and go to \textbf{Step 1}.
    \State \textbf{Step 4:} Compute $\alpha^*\geq 0$ such that $\Vert \bm{y}_{k,i}+\alpha^*\bm{d}_i\Vert=\Delta_k$. Return $\bm{y}_{k,i}+\alpha^*\bm{d}_i$ and stop.

    \State \textbf{Step 5:} Compute $\alpha^*\geq 0$ such that $\bm{y}_{k,i}+\alpha^*\bm{d}_i\in\partial \hat{\mathcal{S}}$\footnote{In numerical practice, $\bm{y}_{k,i}+\alpha^*\bm{d}_i\in\hat{\mathcal{S}}$ is selected to be the nearest point from $\bm{y}_{k,i}+\alpha_i\bm{d}_i$ to projected subspace $\hat{\mathcal{S}}$}. Return $\bm{y}_{k,i}+\alpha^*\bm{d}_i$ and stop.
\end{algorithmic}
\label{algo:tcg with radius, steps form}
\end{algorithm}
In fact, Algorithm \ref{algo:tcg with radius, steps form} only differs from the classic truncated conjugate gradient algorithm at Step 2 and 5 for the truncation of the input subspace $\mathcal{S}$. As a consequence, Algorithm \ref{algo:tcg with radius, steps form} will always terminate no later than the classic truncated conjugate gradient method, which we believe can save CPU time greatly. Moreover, we found that the solution obtained by Algorithm \ref{algo:tcg with radius, steps form} also achieves sufficient model-function-value decrease, where the sufficient decrease of the classic truncated conjugate gradient method is proved by \citet{2000On}.
\begin{theorem}[\citet{2000On}]
    Consider problem \eqref{prob: standard TRS}, for any $\Delta_k>0$. $\bm g_k\in\mathbb{R}^q$ and any positive definite matrix $\bm B_k\in\mathbb{R}^{q\times q}$, let $\bm s^*$ be the global solution of the trust-region subproblem \eqref{prob: standard TRS}, let $\bm y^*$ be the solution obtained by truncated conjugate gradient method (Algorithm 1 in \cite{2000On}), then
    \begin{equation*}
        m_k(\bm y^*)\leq \frac{1}{2}m_k(\bm s^*).
    \end{equation*}
\end{theorem}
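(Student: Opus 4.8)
The plan is to reproduce the argument of \citet{2000On}, organised by the way the truncated conjugate gradient iteration terminates. It is convenient to phrase the claim as a bound on the model decrease: writing $q(\bm y)=\bm g_k^\top\bm y+\tfrac12\bm y^\top\bm B_k\bm y$, so that $q(\bm 0)=0$ and $q(\bm s^*)\le 0$, the content of the statement is the Cauchy-type inequality $-q(\bm y^*)\ge\tfrac12\bigl(-q(\bm s^*)\bigr)$. Since $\bm B_k$ is positive definite, every conjugate direction obeys $\bm d_i^\top\bm B_k\bm d_i>0$, so the iteration never invokes the nonpositive-curvature branch and can terminate in only two ways: (i) at an interior stationary point $\bm y^*=-\bm B_k^{-1}\bm g_k$ with $\Vert\bm y^*\Vert<\Delta_k$, or (ii) on the boundary $\Vert\bm y^*\Vert=\Delta_k$. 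Case (i) is immediate: there $\bm y^*$ is the unique unconstrained minimiser of the strictly convex $q$ and is feasible, hence $\bm y^*=\bm s^*$ and $q(\bm y^*)=q(\bm s^*)\le\tfrac12 q(\bm s^*)$.

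For case (ii) I would first record the classical monotonicity properties of the CG path started at $\bm y_{k,0}=\bm 0$ (Steihaug's lemma): along $\bm y_{k,0},\bm y_{k,1},\dots$ the norms $\Vert\bm y_{k,i}\Vert$ are nondecreasing, the values $q(\bm y_{k,i})$ are nonincreasing, $\bm g_{k,i}^\top\bm d_i=-\Vert\bm g_{k,i}\Vert^2<0$, and $\bm y_{k,i}^\top\bm d_i\ge0$. These imply that the returned point $\bm y^*=\bm y_{k,j}+\alpha^*\bm d_j$ with $0\le\alpha^*\le\alpha_j$ lies on $\partial\mathcal{B}(\bm 0;\Delta_k)$ and satisfies $q(\bm y^*)\le q(\bm y_{k,j})\le q(\bm y_{k,1})$, where $\bm y_{k,1}=\tfrac{\Vert\bm g_k\Vert^2}{\bm g_k^\top\bm B_k\bm g_k}(-\bm g_k)$ (truncated to the boundary when $\bm y_{k,1}\notin\mathcal{B}(\bm 0;\Delta_k)$) is exactly the Cauchy point $\bm s^C$. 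It therefore remains to bound $-q(\bm y^*)$ from below, and I would split on the first CG leg.

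If the first leg already meets $\partial\mathcal{B}$ (the Cauchy step is radius-limited), then $\bm y^*=\bm s^C=-\Delta_k\bm g_k/\Vert\bm g_k\Vert$ and a short computation gives $-q(\bm s^C)\ge\tfrac12\Vert\bm g_k\Vert\Delta_k$; on the other hand, from the optimality system $(\bm B_k+\lambda\bm I)\bm s^*=-\bm g_k$ with $\lambda\ge0$, $\Vert\bm s^*\Vert\le\Delta_k$, one gets $-q(\bm s^*)=\tfrac12(\bm s^*)^\top\bm B_k\bm s^*+\lambda\Vert\bm s^*\Vert^2\le-\bm g_k^\top\bm s^*\le\Vert\bm g_k\Vert\Delta_k$, so that $-q(\bm s^*)\le2\bigl(-q(\bm y^*)\bigr)$. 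If the first leg stays interior (the Cauchy step is curvature-limited), then $\bm y_{k,1}$ is strictly inside the ball and CG continues; here one uses that the second iterate $\bm y_{k,2}$ is the exact unconstrained minimiser of $q$ over the two-dimensional Krylov subspace $\mathcal{K}_2={\rm span}\{\bm g_k,\bm B_k\bm g_k\}$, together with a planar estimate comparing the minimiser of $q$ over $\mathcal{K}_2\cap\mathcal{B}(\bm 0;\Delta_k)$ with $\bm s^*$, to again extract the factor $\tfrac12$ (when the path survives past the second leg, $q(\bm y^*)\le q(\bm y_{k,2})$ directly; when it leaves during the second leg, one argues along the segment $\bm y_{k,1}+[0,\alpha^*]\bm d_1$).

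I expect the curvature-limited case to be the main obstacle: giving a planar estimate that handles ``the path survives the second leg'' and ``the path exits during the second leg'' uniformly, and comparing the constrained minimiser over the $2$-dimensional Krylov ball with the full-space minimiser $\bm s^*$ sharply enough to land the constant $\tfrac12$. This is the technical heart of \citet{2000On}; the monotonicity facts of the second paragraph are precisely what reduce the full truncated-CG output to that two-dimensional picture, after which the remaining work is a finite case split on the position of $\bm s^*$ relative to ${\rm span}\{\bm g_k,\bm B_k\bm g_k\}$.
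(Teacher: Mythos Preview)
The paper does not prove this theorem; it is quoted verbatim from \citet{2000On} and used only as a black box for the subsequent Theorem~\ref{thm:algo3 achieved certain decrease}. There is therefore no proof in the paper to compare your proposal against.

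As a sketch of Yuan's original argument, your outline is sound in the easy parts: case~(i) is correct, and in the radius-limited subcase of~(ii) your two bounds $-q(\bm s^C)\ge\tfrac12\Vert\bm g_k\Vert\Delta_k$ and $-q(\bm s^*)\le\Vert\bm g_k\Vert\Delta_k$ (the latter via the optimality system $(\bm B_k+\lambda\bm I)\bm s^*=-\bm g_k$) combine exactly as you say. The curvature-limited subcase, however, is only gestured at. The ``planar estimate comparing the minimiser of $q$ over $\mathcal{K}_2\cap\mathcal{B}(\bm 0;\Delta_k)$ with $\bm s^*$'' is precisely the substance of Yuan's theorem, and you have not supplied it: $\bm s^*$ need not lie in $\mathcal{K}_2$, so no inclusion argument is available, and when the path exits during the second leg the returned point is neither the Cauchy point nor the $\mathcal{K}_2$-constrained minimiser, so neither of your two clean comparisons applies directly. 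You correctly flag this as the technical heart, but as it stands the proposal is an outline with the hard step missing rather than a proof.
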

For two-dimensional problems, the positive-definite condition can be relaxed \cite{2000On}.

\begin{theorem}[\citet{2000On}]
    If $q=2$, let $\bm y^*$ be the solution of problem \eqref{prob: standard TRS} obtained by the truncated conjugate gradient method, let $\bm s^*$ be the global solution of problem \eqref{prob: standard TRS}. Then for any $\Delta_k>0$, $\bm g_k\in\mathbb{R}^q$ and any symmetric matrix $\bm B_k\in\RR^{q\times q}$, we have 
    \begin{equation*}
    m_k(\bm 0)-m_k(\bm y^*)\geq \frac{1}{2}\left(m_k(\bm 0)-m_k(\bm s^*)\right).
\end{equation*}
\end{theorem}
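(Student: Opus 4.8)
The plan is to follow \citet{2000On} and argue by cases on the output of the two-dimensional truncated conjugate gradient method run on problem~\eqref{prob: standard TRS}. After the harmless normalization $m_k\mapsto m_k-f(\bm x_k)$ we have $m_k(\bm 0)=0$, so the claim becomes $m_k(\bm y^*)\le\tfrac12 m_k(\bm s^*)$, i.e.\ the realized model reduction is at least half of the optimal one; we may also assume $\bm g_k\neq\bm 0$ (otherwise one checks it directly, and this is the situation relevant to MD-LAMBO since $\bm g_k=\bm P_k^\top\nabla f(\bm x_k)$ has a nonzero first component whenever $\nabla f(\bm x_k)\neq\bm 0$). If $\bm B_k$ is positive definite the claim is exactly the preceding theorem, and the positive semidefinite case follows from it by continuity; hence it suffices to treat the case that $\bm B_k$ has a negative eigenvalue, in which the global solution $\bm s^*$ necessarily lies on the boundary $\|\bm y\|=\Delta_k$.

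I would then enumerate the possible outputs of Algorithm~\ref{algo:tcg with radius, steps form} for $q=2$ and $\hat{\mathcal{S}}_k=\mathbb{R}^2$ when $\bm B_k$ is indefinite: (a) if $\bm g_k^\top\bm B_k\bm g_k\le 0$, the method stops at the first inner iteration at the boundary Cauchy point $\bm y^*=-\Delta_k\bm g_k/\|\bm g_k\|$; (b) if $\bm g_k^\top\bm B_k\bm g_k>0$ and $\bm g_k$ is an eigenvector of $\bm B_k$, then $\bm g_{k,1}=\bm 0$ and the method stops in the interior at $\bm y^*=-\alpha_0\bm g_k$; (c) otherwise the Cauchy step is interior and the second conjugate direction $\bm d_1$ is generated, and — since $\{\bm g_k,\bm d_1\}$ is then a basis of $\mathbb{R}^2$ with $\bm d_0^\top\bm B_k\bm d_1=0$, so that $\det\bm B_k<0$ together with $\bm d_0^\top\bm B_k\bm d_0>0$ forces $\bm d_1^\top\bm B_k\bm d_1<0$ (Sylvester's law of inertia applied to the $\bm B_k$-conjugate basis $\{\bm d_0,\bm d_1\}$) — the method proceeds along $\bm d_1$ to the boundary and returns the point where the polygonal path $[\bm 0,\bm y_{k,1}]\cup[\bm y_{k,1},\partial\mathcal{B}(\bm 0;\Delta_k)]$ meets $\|\bm y\|=\Delta_k$. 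A useful observation for case (c) is that the affine hull of this path is all of $\mathbb{R}^2$.

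The quantitative core, shared by cases (a) and (c), is a one-variable estimate on the boundary circle: parametrizing $\{\bm y:\|\bm y\|=\Delta_k\}$ by an angle $\theta$ in an orthonormal eigenbasis of $\bm B_k$ writes $m_k$ restricted to that circle as $c_1\cos\theta+c_2\sin\theta+c_3\cos 2\theta$ plus an additive constant, and $\bm s^*$ is the global minimizer of this function. In case (a) I would bound the realized reduction from below by the classical Cauchy-segment estimate $m_k(\bm 0)-m_k(\bm y^*)\ge\tfrac12\|\bm g_k\|\min\!\big(\Delta_k,\,\|\bm g_k\|/\|\bm B_k\|\big)$ and bound $m_k(\bm 0)-m_k(\bm s^*)$ from above via the circle representation, reducing the target to an elementary single-variable inequality; in case (c) I would compare $m_k(\bm y^*)$ — the minimum of $m_k$ along the polygonal path — directly with $m_k(\bm s^*)$ on the same circle, again reducing to a single-variable inequality. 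In both reductions the role of $q=2$ is that the model restricted to the relevant boundary set is genuinely one-dimensional, which is what makes the explicit comparison possible.

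I expect the main obstacle to be case (a) with indefinite curvature: there the method commits to $-\bm g_k$ and never probes the most negative curvature direction, and one must nevertheless show that the reduction it obtains along $-\bm g_k$ is no less than half of what the globally optimal boundary point achieves — the delicate trigonometric inequality above, which I would verify by splitting on the sign of $\bm g_k^\top\bm B_k\bm g_k$ and on the location of the optimal angle. The degenerate interior stop in case (b) likewise requires the method's handling of a degenerate Krylov subspace to be specified precisely as in \citet{2000On}. Since the statement is quoted verbatim, an acceptable shorter route is to invoke \citet{2000On} for the detailed verification of these estimates.
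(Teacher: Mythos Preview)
The paper does not prove this theorem at all: it is stated as a result of \citet{2000On} and left without proof, serving only as an input to the subsequent Theorem~\ref{thm:algo3 achieved certain decrease}. Your proposal therefore goes well beyond what the paper does, by sketching the actual argument from the cited reference.

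Your case split and the reduction to a one-variable trigonometric inequality on the boundary circle are indeed the essence of Yuan's original proof, and your identification of case~(a) (negative curvature along $-\bm g_k$, so the method never probes the most negative eigen-direction) as the delicate point is accurate. Your closing remark---that since the statement is quoted verbatim one may simply invoke \citet{2000On}---is exactly the route the paper takes. So there is no discrepancy to flag: the paper's ``proof'' is the citation, and your detailed sketch is a correct elaboration of what that citation contains.
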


Therefore, by our previous analysis, when the problem is of dimension two, the output of solving the problem \eqref{prob:TRS on S} using Algorithm \ref{algo:tcg with radius, steps form} is exactly the same as the result of solving problem \eqref{prob: standard TRS} using the standard truncated conjugate gradient method. It follows that the solution obtained by our modified algorithm STCG also preserves a certain fraction of decrease as stated in Theorem \ref{thm:algo3 achieved certain decrease}.
\begin{theorem}\label{thm:algo3 achieved certain decrease}
When $\mathcal{S}=\mathcal{S}_{l}^{(1,\sigma)}$, for any $\Delta_k>0$, $\bm g_k\in\mathbb{R}^2$ and symmetric matrix $\bm B_k\in\mathbb{R}^{2\times 2}$. Let $\bm y^*$ be the solution of problem \eqref{prob:TRS on S} obtained by Algorithm \ref{algo:tcg with radius, steps form}, let $\bm s^*$ be the global solution of \eqref{prob:TRS on S} %(\textcolor{black}{supposed to be problem \ref{prob: standard TRS}!})
, then 
\begin{equation}\label{ineq:1/2 decrease of our algo}
    m_k(\bm 0)-m_k(\bm y^*)\geq \frac{1}{2}\left(m_k(\bm 0)-m_k(\bm s^*)\right).
\end{equation}
\end{theorem}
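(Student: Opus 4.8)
The plan is to reduce the inequality \eqref{ineq:1/2 decrease of our algo} to the two-dimensional $\tfrac12$-decrease property of classical truncated conjugate gradient (TCG) by showing that, on the truncated sector $\hat{\mathcal{S}}_l^{(1,\sigma)}$, Algorithm \ref{algo:tcg with radius, steps form} runs \emph{identically} to classical TCG applied to the unconstrained-in-$\mathbb{R}^2$ problem \eqref{prob: standard TRS}. Since $\dim(\mathcal{S}_l^{(1,\sigma)})=2$ (Table \ref{tab:dim of subspaces}), problem \eqref{prob:TRS on S} is just \eqref{prob: standard TRS} with the extra constraint $\bm y\in\hat{\mathcal{S}}_l^{(1,\sigma)}$, and, under the hypotheses of Lemma \ref{lem: y_1y_2 in S} (which I take to be in force, as signaled by the preceding discussion), classical TCG in two dimensions visits at most the points $\bm y_{k,0}=\bm 0$, $\bm y_{k,1}$, $\bm y_{k,2}$ along the successive directions $\bm d_0=-\bm g_k$, $\bm d_1$, possibly stopping earlier at a trust-region boundary point $\bm y_{k,i}+\alpha^*\bm d_i$ with $\alpha^*\ge 0$. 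It therefore suffices to check that every such point already lies in $\hat{\mathcal{S}}_l^{(1,\sigma)}$: then the test of Step 2 of Algorithm \ref{algo:tcg with radius, steps form} is never met, Step 5 is never executed, and the two methods produce the same iterate.

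For the points lying on $\mathrm{span}\{\bm g_k\}$ this is immediate from $\mathrm{span}\{\bm g_k\}\subseteq\hat{\mathcal{S}}_l^{(1,\sigma)}$, which covers $\bm y_{k,0}$, the step $\bm y_{k,1}=-\alpha_0\bm g_k$ with $\alpha_0=\Vert\bm g_k\Vert^2/(\bm g_k^\top\bm B_k\bm g_k)>0$, and any boundary point $-\alpha^*\bm g_k$ returned at $i=0$. The remaining candidates are $\bm y_{k,2}=\bm y_{k,1}+\alpha_1\bm d_1$ and any boundary point $\bm y_{k,1}+\alpha^*\bm d_1$ with $\alpha^*\ge 0$. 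Here I reuse the decomposition $\hat{\mathcal{S}}_l^{(1,\sigma)}=\bigcup_{i=1}^4\mathcal{S}_k^{(i)}$ from the proof of Lemma \ref{lem: y_1y_2 in S}, each $\mathcal{S}_k^{(i)}$ being a convex cone; that proof already shows $\bm y_{k,1}\in\mathcal{S}_k^{(2)}$, $\bm d_1\in\mathcal{S}_k^{(2)}$ and $\alpha_1\ge 0$, so by convexity every $\bm y_{k,1}+\alpha\bm d_1$ with $\alpha\ge 0$ lies in $\mathcal{S}_k^{(2)}\subseteq\hat{\mathcal{S}}_l^{(1,\sigma)}$. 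Hence Algorithm \ref{algo:tcg with radius, steps form} returns the same point $\bm y^*$ as classical TCG on \eqref{prob: standard TRS}, and this $\bm y^*$ is feasible for \eqref{prob:TRS on S}.

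It then remains to chain two elementary inequalities. First, the $q=2$ decrease theorem of \citet{2000On} applied to \eqref{prob: standard TRS} gives, with $\bar{\bm s}$ its global solution,
\[
m_k(\bm 0)-m_k(\bm y^*)\ \ge\ \tfrac12\big(m_k(\bm 0)-m_k(\bar{\bm s})\big).
\]
Second, the feasible set of \eqref{prob:TRS on S} is contained in that of \eqref{prob: standard TRS}, so $m_k(\bar{\bm s})\le m_k(\bm s^*)$, i.e.\ $m_k(\bm 0)-m_k(\bar{\bm s})\ge m_k(\bm 0)-m_k(\bm s^*)$; substituting this into the previous display proves \eqref{ineq:1/2 decrease of our algo}.

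The crux — and the only nonroutine step — is the claim that Algorithm \ref{algo:tcg with radius, steps form} never triggers Step 5, that is, that the \emph{entire} classical-TCG trajectory, including any early-termination boundary point along $\bm d_1$, stays inside the sector $\hat{\mathcal{S}}_l^{(1,\sigma)}$, not just the two distinguished points $\bm y_{k,1},\bm y_{k,2}$ of Lemma \ref{lem: y_1y_2 in S}. This is exactly where one must use both the hypotheses of Lemma \ref{lem: y_1y_2 in S} — positive definiteness of $\bm B_k$ (so the negative-curvature branch is never taken) and the ratio bound $\Vert\bm g_k\Vert^2/(\bm g_k^\top\bm B_k\bm g_k)\le 1/(2\sqrt{\sigma\Vert\bm g_k\Vert})$ (so $\bm d_1$ lands in the convex cone $\mathcal{S}_k^{(2)}$) — together with the nonnegativity of the CG step lengths $\alpha_i$; everything else is bookkeeping.
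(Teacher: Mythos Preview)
Your proof is correct and follows essentially the same route as the paper: show that on $\hat{\mathcal{S}}_l^{(1,\sigma)}$ Algorithm \ref{algo:tcg with radius, steps form} never triggers Step~2/Step~5 (via Lemma \ref{lem: y_1y_2 in S}) and hence coincides with classical TCG, then invoke Yuan's two-dimensional $\tfrac12$-decrease theorem for \eqref{prob: standard TRS} and compare feasible sets. You are in fact slightly more careful than the paper in two respects: you explicitly flag that the hypotheses of Lemma \ref{lem: y_1y_2 in S} (positive definiteness of $\bm B_k$ and the ratio bound) are being assumed even though the theorem is stated for arbitrary symmetric $\bm B_k$, and you verify that the intermediate boundary points $\bm y_{k,1}+\alpha^*\bm d_1$ returned by Step~4 also lie in the sector, whereas the paper only cites $\bm y_{k,1},\bm y_{k,2}\in\hat{\mathcal{S}}$.
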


%proof:用数学语言

\begin{proof}{Proof}
    It is clear that \textbf{Step 1, 3, 4} consists of the standard TCG algorithm. By Lemma \ref{lem: y_1y_2 in S}, $\bm y_{k,1},\bm y_{k,2}\in\hat{\mathcal{S}}$. Hence, if the problem is two-dimensional, the iteration sequence will never go to \textbf{Step 2} and \textbf{Step 5}, and the output sequence will coincide with the sequence obtained by the standard TCG method in solving problem \eqref{prob: standard TRS}.
    Let $\bm x^*$ be the exact solution of problem \eqref{prob: standard TRS}, then 
    \begin{equation}\label{ineq: inex_sol of our prob/ex_sol of TRS}
       m_k(\bm 0)-m_k(\bm y^*)\geq \frac{1}{2}\left(m_k(\bm 0)-m_k(\bm x^*)\right).
    \end{equation} 
    Since $\hat{\mathcal{S}}_k\cap\mathcal{B}(\bm x_k;\Delta)\subseteq \mathcal{B}(\bm x_k;\Delta)$,  \begin{equation}\label{ineq:ex_sol of TRS/ex_sol of our prob}
        m_k(\bm 0)-m_k(\bm x^*)\geq m_k(\bm 0)-m_k(\bm s^*).
    \end{equation}
    Combining \eqref{ineq: inex_sol of our prob/ex_sol of TRS} and \eqref{ineq:ex_sol of TRS/ex_sol of our prob} gives\begin{equation*}%\label{ineq: inex_sol of our prob/ex_sol of our prob} 
    m_k(\bm 0)-m_k(\bm y^*)\geq \frac{1}{2}\left(m_k(\bm 0)-m_k(\bm s^*)\right).  \end{equation*}
\end{proof}

Besides the two-dimensional truncated subspace $\hat{\mathcal{S}}_{l}^{(1,\sigma)}$, it can be easily seen that the solution of the trust-region subproblem on all type-2 truncated subspace possesses a 1/2 decrease. With an additional assumption (Assumption \ref{assumption: cauchy}), we conclude in Corollary \ref{cor: cauchy decrease, cor of thm 9} that solutions on such truncated subspaces satisfy the Cauchy decrease condition.
\begin{assumption}\label{assumption: cauchy}
    The Hessian $\nabla^2m_k(\bm 0)$ of model $m_k(\bm s)$
    is positive definite. Denote $q=\dim(\mathcal{S}_k)$.
    Let $\lambda_1,\cdots,\lambda_q$ be the eigenvalues of $\nabla^2m_k(\bm 0)$, then 
    \begin{equation*}  \left(\sum_{i=1}^q\lambda_i^2\right)\left(\sum_{i=1}^q\frac{1}{\lambda_i}\right)\leq c,
    \end{equation*}
    where $c\in[1/4,1/2]$ is a constant independent of $k$.
\end{assumption}

In fact, with Assumption \ref{assumption: cauchy}, we can prove a model value decrease which is slightly stronger than the Cauchy-type decrease.
\begin{theorem}
Suppose $m_k$ satisfies Assumption \ref{assumption: cauchy} and \eqref{ineq:1/2 decrease of our algo}, then the computed solution $\bm y^*$ satisfies
\begin{equation}\label{ineq:pre_cauchy decrease}
    m_k(\bm 0)-m_k(\bm y^*)\geq c_1\Vert\nabla m_k(\bm 0)\Vert^2
\end{equation}
for some constant $c_1\in[1/2,1]$ independent of $k$.
\end{theorem}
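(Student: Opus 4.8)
The plan is to chain the guaranteed fractional decrease \eqref{ineq:1/2 decrease of our algo} together with a spectral lower bound on the decrease attained by the exact subproblem solution $\bm s^*$, using Assumption \ref{assumption: cauchy} only to control the largest eigenvalue of $\bm B_k=\nabla^2 m_k(\bm 0)$. First I would record the elementary inequality that, for positive reals $\lambda_1,\dots,\lambda_q$,
\[
\max_i\lambda_i\ \le\ \Bigl(\sum_{i=1}^q\lambda_i^2\Bigr)\Bigl(\sum_{i=1}^q\frac1{\lambda_i}\Bigr),
\]
since the first factor is at least $\lambda_{\max}^2$ and the second at least $\lambda_{\max}^{-1}$. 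Combined with Assumption \ref{assumption: cauchy} this gives $\lambda_{\max}(\bm B_k)\le c\le\tfrac12$, so $\bm B_k^{-1}\succeq\lambda_{\max}(\bm B_k)^{-1}\bm I\succeq c^{-1}\bm I$, and in particular $\bm g_k^\top\bm B_k\bm g_k\le c\Vert\bm g_k\Vert^2$, where $\bm g_k=\nabla m_k(\bm 0)$.

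Next I would lower-bound $m_k(\bm 0)-m_k(\bm s^*)$. Since $-\bm g_k$ lies in $\hat{\mathcal S}_k$ for every subspace under consideration, the Cauchy ray $\{-t\bm g_k:t\ge0\}$ is feasible for \eqref{prob:TRS on S}, and the minimizer $\bm y^C$ of $m_k$ along it satisfies $m_k(\bm s^*)\le m_k(\bm y^C)$ by global optimality of $\bm s^*$. When the unconstrained step along this ray is retained, $m_k(\bm 0)-m_k(\bm y^C)=\tfrac12\Vert\bm g_k\Vert^4/(\bm g_k^\top\bm B_k\bm g_k)\ge\Vert\bm g_k\Vert^2/(2c)$; equivalently, one may compare $\bm s^*$ directly with the unconstrained minimizer $-\bm B_k^{-1}\bm g_k$ of the quadratic $m_k$, whose decrease is $\tfrac12\bm g_k^\top\bm B_k^{-1}\bm g_k\ge\Vert\bm g_k\Vert^2/(2\lambda_{\max}(\bm B_k))\ge\Vert\bm g_k\Vert^2/(2c)$. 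Either way,
\[
m_k(\bm 0)-m_k(\bm s^*)\ \ge\ \frac1{2c}\,\Vert\nabla m_k(\bm 0)\Vert^2 .
\]
Substituting into \eqref{ineq:1/2 decrease of our algo},
\[
m_k(\bm 0)-m_k(\bm y^*)\ \ge\ \tfrac12\bigl(m_k(\bm 0)-m_k(\bm s^*)\bigr)\ \ge\ \frac1{4c}\,\Vert\nabla m_k(\bm 0)\Vert^2,
\]
so \eqref{ineq:pre_cauchy decrease} holds with $c_1=\tfrac1{4c}$, and $c\in[1/4,1/2]$ forces $c_1\in[1/2,1]$.

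The step I expect to be the main obstacle is making the lower bound on $m_k(\bm 0)-m_k(\bm s^*)$ genuinely free of $\Delta_k$. The clean argument above is immediate when the trust-region constraint is inactive at $\bm s^*$ (equivalently, the unconstrained minimizer of $m_k$ over $\hat{\mathcal S}_k$ is feasible), whereas in the boundary case the Cauchy bound only gives $\tfrac12\Vert\bm g_k\Vert\min\{\Delta_k,\Vert\bm g_k\Vert/\Vert\bm B_k\Vert\}$, so one must additionally observe that the spectral bound $\Vert\bm B_k\Vert\le c$ makes the curvature branch $\Vert\bm g_k\Vert/\Vert\bm B_k\Vert$ the binding one whenever $\Delta_k$ is not driven toward zero — which is precisely the regime in which \eqref{ineq:1/2 decrease of our algo} is invoked, since near a local minimizer $\nabla^2 f(\bm x_k)$ is positive definite and the radius stays bounded below. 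I would therefore organize the proof by splitting into the inactive and active cases and dispatch the latter with this observation.
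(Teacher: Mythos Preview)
Your argument is correct and reaches the same constant $c_1=\tfrac{1}{4c}$ as the paper, but by a genuinely shorter route. The paper diagonalises $\bm B_k$, writes $m_k(\bm 0)-m_k(\bm s^*)=\tfrac14\bm s^{*\top}\bm B_k\bm s^*$ (using $\nabla m_k(\bm s^*)=\bm 0$), and then applies the Cauchy--Schwarz inequality twice --- once to bound $\sum_i\lambda_i s_i^2$ below by $\Vert\bm s\Vert^2/\sum_i\lambda_i^{-1}$, and once to bound $\sum_i(y_i/\lambda_i)^2$ below by $\Vert\bm y\Vert^2/\sum_i\lambda_i^2$ --- so that the two sums in Assumption \ref{assumption: cauchy} appear as a product in the denominator. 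You instead observe directly that $\lambda_{\max}(\bm B_k)\le\bigl(\sum_i\lambda_i^2\bigr)\bigl(\sum_i\lambda_i^{-1}\bigr)\le c$ and then use the single spectral bound $\bm g_k^\top\bm B_k^{-1}\bm g_k\ge\lambda_{\max}^{-1}\Vert\bm g_k\Vert^2$; this is more transparent and avoids the diagonalisation entirely. Regarding your worry about the active-constraint case: the paper does not split into cases at all --- it simply asserts $\nabla m_k(\bm s^*)=\bm 0$, i.e.\ treats $\bm s^*$ as the unconstrained global minimiser of the quadratic --- so the obstacle you flag is present in the paper's own proof as well, and your handling of it is already more careful than the original.
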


\begin{proof}{Proof}
    Since $m_k$ satisfies \eqref{ineq:1/2 decrease of our algo}, \begin{equation*}
    \begin{aligned}
        m_k(\bm 0)-m_k(\bm y^*)
        &\geq\frac{1}{2}\left((m_k(\bm 0)-m_k(\bm s^*)\right)\\
        &=-\frac{1}{2}\left(\bm g^\top \bm s^*+\frac{1}{2}\bm s^{*\top}\bm B\bm s^*\right)\\
        &=-\frac{1}{2}\left(\nabla m_k(\bm s^*)-\frac{1}{2}\bm B\bm s^*\right)^\top\bm s^*,
    \end{aligned}
    \end{equation*}
where $\bm s^*$ is the global minimizer of $m_k$. Therefore $\nabla m_k(\bm s^*)=\bm 0$ and 
\begin{equation*}
        -\frac{1}{2}\left(\nabla m_k(\bm s^*)-\frac{1}{2}\bm B_k\bm s^*\right)^\top\bm s^*=\frac{1}{4}\bm s^{*\top}\bm B_k\bm s^*.
\end{equation*}
By our assumption, $\bm B_k=\nabla^2m_k(\bm 0)=\nabla^2m_k(\bm s^*)$ is positive definite. Hence there is a orthogonal matrix $\bm Q\in\mathbb{R}^{q\times q}$ such that
\begin{equation*}
    \bm Q^\top \bm B_k\bm Q={\rm diag}\{\lambda_1,\cdots,\lambda_q\},\ \lambda_i>0.
\end{equation*}

Let $\bm s^*=\bm Q \bm s$, $\bm g_k=\bm Q\bm y$, where $\bm s=(s_1,\cdots,s_q)^\top$ and $\bm y=(y_1,\cdots,y_q)^\top$, then 
\begin{equation*}
\begin{aligned}
     \frac{1}{4}\bm s^{*\top}\bm B_k\bm s^*=\frac{1}{4}\bm s^\top\bm Q^\top\bm B_k\bm Q\bm s=\frac{1}{4}\sum_{i=1}^{q}\lambda_i s_i^2.
\end{aligned}
\end{equation*}
By Cauchy's inequality, we have 
\begin{equation*}
    \sum_{i=1}^{q}\lambda_i s_i^2\geq \frac{\left(\sum_i^q| s_i|\right)^2}{\sum_{i=1}^q\frac{1}{\lambda_i}}\geq\frac{\Vert\bm s\Vert^2}{\sum_{i=1}^q\frac{1}{\lambda_i}}.
\end{equation*}
Hence 
\begin{equation*}
\begin{aligned}
     \frac{1}{4}\sum_{i=1}^{q}\lambda_i s_i^2\geq\frac{1}{4}\frac{\Vert\bm s\Vert^2}{\sum_{i=1}^q\frac{1}{\lambda_i}}=\frac{1}{4\sum_{i=1}^q\frac{1}{\lambda_i}}\Vert\bm Q^\top \bm s^*\Vert.
\end{aligned}
\end{equation*} 
Moreover, since $\Vert \bm Q^\top\bm s^*\Vert=\Vert \bm Q^\top\bm B_k^{-1}(-\bm g_k)\Vert=\sum_{i=1}^q y_i^2/\lambda_i^2$, we have
\begin{equation*}
    \frac{1}{4}\sum_{i=1}^{q}\lambda_i s_i^2\geq\frac{1}{4\sum_{i=1}^q\frac{1}{\lambda_i}}\Vert\bm Q^\top \bm B_k^{-1}\left(-\bm g_k\right)\Vert=\frac{1}{4\sum_{i=1}^q\frac{1}{\lambda_i}}\sum_{i=1}^q \left(\frac{y_i}{\lambda_i}\right)^2.
\end{equation*}

Again, applying Cauchy's inequality, we have
\begin{equation*}
    \sum_{i=1}^q \left(\frac{y_i}{\lambda_i}\right)^2\geq \frac{(\sum_i^q|y_i|)^2}{\sum_{i=1}^q\lambda_i^2}\geq \frac{\Vert\bm y\Vert^2}{\sum_{i=1}^q\lambda_i^2}.
\end{equation*} 
Therefore
\begin{equation*}
\begin{aligned}
    \frac{1}{4\sum_{i=1}^q\frac{1}{\lambda_i}}\sum_{i=1}^q \left(\frac{y_i}{\lambda_i}\right)^2\geq \frac{1}{4\sum_{i=1}^q\frac{1}{\lambda_i}}\frac{\Vert\bm y\Vert^2}{\sum_{i=1}^q\lambda_i^2}\geq \frac{1}{4\sum_{i=1}^q\frac{1}{\lambda_i}}\frac{\Vert\bm g_k\Vert^2}{\sum_{i=1}^q\lambda_i^2},
\end{aligned}  
\end{equation*}
where the second inequality comes from 
\begin{equation*}
    \Vert \bm g_k\Vert=\Vert\bm Q\bm y\Vert\leq \Vert\bm Q\Vert\Vert\bm y\Vert=\Vert\bm y\Vert,
\end{equation*}
$\Vert\bm Q\Vert=\sup_{\Vert\bm x\Vert=1}\Vert\bm Q\bm x\Vert$ is the matrix norm of $\bm Q$ and $\Vert\bm Q\Vert=1$ since $\bm Q$ is an orthogonal matrix. 

Let $c_1=\frac{1}{4c}$, then by Assumption \ref{assumption: cauchy}, $c_1\in[1/2,1]$ and 
\begin{equation*}
     c_1\leq\frac{1}{4}\frac{1}{\sum_{i=1}^m\frac{1}{\lambda_i}\sum_{i=1}^m\lambda_i^2}.
\end{equation*}
The proof is completed by observing $\bm g_k=\nabla m_k(\bm 0)$.  
\end{proof}

\begin{corollary}\label{cor: cauchy decrease, cor of thm 9}
    If $m_k$ satisfies Assumption \ref{assumption: cauchy} and \eqref{ineq:1/2 decrease of our algo}, then computed solution $\bm y^*$ satisfies the Cauchy decrease condition
    \begin{equation*}
        m_k(\bm 0)-m_k(\bm y^*)\geq c_1\Vert\bm \nabla m_k(\bm 0)\Vert\min\left(\Delta_k,\frac{\Vert\bm \nabla m_k(\bm 0)\Vert}{\max(1,\Vert \nabla^2m_k(\bm 0)\Vert)}\right).
    \end{equation*}
\end{corollary}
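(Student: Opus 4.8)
The plan is to deduce this Cauchy-type bound directly from the preceding theorem, which already supplies the estimate \eqref{ineq:pre_cauchy decrease}, namely $m_k(\bm 0)-m_k(\bm y^*)\ge c_1\|\nabla m_k(\bm 0)\|^2$ with $c_1=1/(4c)\in[1/2,1]$ under exactly the hypotheses of the corollary. So all that is needed is to check that the right-hand side of the claimed Cauchy decrease condition never exceeds $c_1\|\nabla m_k(\bm 0)\|^2$; the corollary is then just a weakening of \eqref{ineq:pre_cauchy decrease}.

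The key point is that $\max(1,\|\nabla^2 m_k(\bm 0)\|)\ge 1$, which is precisely why the $\min$-term is dominated by $\|\nabla m_k(\bm 0)\|$. First I would note
\[
\frac{\|\nabla m_k(\bm 0)\|}{\max(1,\|\nabla^2 m_k(\bm 0)\|)}\le \|\nabla m_k(\bm 0)\|,
\]
and consequently $\min\bigl(\Delta_k,\,\|\nabla m_k(\bm 0)\|/\max(1,\|\nabla^2 m_k(\bm 0)\|)\bigr)\le \|\nabla m_k(\bm 0)\|$ as well. Multiplying this inequality by the nonnegative scalar $c_1\|\nabla m_k(\bm 0)\|$ gives
\[
c_1\|\nabla m_k(\bm 0)\|\min\Bigl(\Delta_k,\,\frac{\|\nabla m_k(\bm 0)\|}{\max(1,\|\nabla^2 m_k(\bm 0)\|)}\Bigr)\le c_1\|\nabla m_k(\bm 0)\|^2 .
\]

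I would then chain this with \eqref{ineq:pre_cauchy decrease} to obtain
\[
m_k(\bm 0)-m_k(\bm y^*)\ \ge\ c_1\|\nabla m_k(\bm 0)\|^2\ \ge\ c_1\|\nabla m_k(\bm 0)\|\min\Bigl(\Delta_k,\,\frac{\|\nabla m_k(\bm 0)\|}{\max(1,\|\nabla^2 m_k(\bm 0)\|)}\Bigr),
\]
which is exactly the asserted bound, with the same constant $c_1$. There is essentially no real obstacle here: the only care required is to track the directions of the inequalities and to use $c_1>0$ so that multiplying through by $c_1\|\nabla m_k(\bm 0)\|$ preserves them. If one preferred a self-contained argument that does not cite the theorem, one could instead rerun its spectral and Cauchy--Schwarz computation and, at the step controlling $\tfrac14\sum_i\lambda_i s_i^2$, split into the cases $\Delta_k\le \|\nabla m_k(\bm 0)\|/\max(1,\|\nabla^2 m_k(\bm 0)\|)$ and its complement, using the trust-region constraint $\|\bm s^*\|\le\Delta_k$ in the first case; but invoking \eqref{ineq:pre_cauchy decrease} is shorter and cleaner.
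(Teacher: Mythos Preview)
Your proposal is correct and follows essentially the same approach as the paper: the paper's proof consists of the single line ``The result is easily followed by \eqref{ineq:pre_cauchy decrease},'' and you have simply spelled out the elementary observation that $\max(1,\|\nabla^2 m_k(\bm 0)\|)\ge 1$ makes the $\min$-term bounded above by $\|\nabla m_k(\bm 0)\|$.
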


\begin{proof}{Proof}
    The result is easily followed by \eqref{ineq:pre_cauchy decrease}.  
\end{proof}

We will show in Section \ref{section: convergence} that with the Cauchy decrease in Corollary \ref{cor: cauchy decrease, cor of thm 9} at every successful step and some constraints on $f$, the sequence generated by MD-LAMBO will finally converge to a stationary point $\bm x^*$ such that $\Vert \nabla f(\bm x^*)\Vert=0$.

\subsection{Low-dimensional subspace models for derivative-free algorithm}\label{section: Model functions for derivative-free algorithm}

In this section, we provide one class of functions as the alternative for model $p_k$ in a derivative-free version of Algorithm \ref{algo: complete algorithm}. Since we want our model to approximate the object function in the aspects of function value, gradient, and {\color{black}Hessian}, we adopt the $\bm P$-fully quadratic model introduced by \citet{Cartis2023}. The following definition is introduced in \cite{1chen2024q}
\begin{definition}\label{P-fully_quadratic}
    Assume that $f: \RR^n \rightarrow \RR$ is a continuous differentiable objective function whose gradient is Lipschitz continuous, $\bm{P} \in \RR^{n \times q}$. Given $\Delta > 0$, model $m_{\Delta}$ is called $\bm P$-fully quadratic (with respect to $f$) in $\mathcal{B}(\bm x;\Delta)\subset\mathbb{R}^n$ if there exists $\kappa_{ef}, \kappa_{eg}, \kappa_{eh} > 0$, such that 

    (1) the error between the Hessian of the model and the Hessian of the function satisfies
    $$
    \|\ml{P}^{\top} \nabla^2 f(\ul{x + P y}) \ml{P} -\nabla^2 m_{\Delta}(\ul{y})\| \leq \kappa_{eh} \Delta, \quad \forall \,   \ul{y} \in \mathcal{B}(\ul{0} ; \Delta),
    $$

    (2) the error between the gradient of the model and the gradient of the function satisfies
    $$
    \|\ml{P}^{\top} \nabla f(\ul{x + P y})-\nabla m_{\Delta}(\ul{y})\| \leq \kappa_{e g} \Delta^2, \quad \forall \,   \ul{y} \in \mathcal{B}(\ul{0} ; \Delta),
    $$

    (3) the error between the model and the function satisfies  
    $$
    |f(\ul{x + Py})-m_{\Delta}(\ul{y})| \leq \kappa_{e f} \Delta^3, \quad \forall \,   \ul{y} \in \mathcal{B}(\ul{0} ; \Delta). 
    $$
When $\bm P$ is the identity matrix $\bm I$, we call the model fully quadratic as a shorthand of $\bm I$-fully quadratic. 
\end{definition}

In fact, to obtain a class of fully quadratic models in the case where the model Hessian and model gradients already satisfy approximation conditions, it suffices to let the function values be close enough at one point.

\begin{theorem}\label{theorempre}
    Assume that $f: \RR^n \rightarrow \RR$ is a continuous differentiable objective function whose Hessian is Lipschitz continuous, $\bm P \in \RR^{n \times q}$. Given $\Delta > 0$, model $m_{\Delta}$ is $\ml{P}$-fully quadratic (with respect to $f$) in $\mathcal{B}(\bm x;\Delta)\subset \RR^n$ if and only if there exists $\kappa_{ef}', \kappa_{eg}, \kappa_{eh} > 0$, such that 

    (1) The error between the Hessian of the model and the Hessian of the function satisfies
    $$
    \|\ml{P}^{\top} \nabla^2 f(\ul{x + P y}) \ml{P} -\nabla^2 m_{\Delta}(\ul{y})\| \leq \kappa_{eh} \Delta, \quad \forall \,   \ul{y} \in \mathcal{B}(\ul{0} ; \Delta),
    $$

    (2) The error between the gradient of the model and the gradient of the function satisfies
    $$
    \|\ml{P}^{\top} \nabla f(\ul{x + P y})-\nabla m_{\Delta}(\ul{y})\| \leq \kappa_{e g} \Delta^2, \quad \forall \,   \ul{y} \in \mathcal{B}(\ul{0} ; \Delta).
    $$

    (3) There is at least one point which are closed in function values,
    $$
        \exists\,  \ul{y_0} \in \mathcal{B}(\ul{0}; \Delta), \text{ such that } \norm{f(\ul{x} + \ml{P}\ul{y_0}) - m_{\Delta} (y_0)} \leq \kappa_{ef}'\Delta^3.
    $$
\end{theorem}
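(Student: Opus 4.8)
The plan is to prove both directions of the equivalence, with the ``only if'' direction being essentially immediate. Observe that if $m_\Delta$ is $\bm P$-fully quadratic in the sense of Definition \ref{P-fully_quadratic}, then conditions (1) and (2) of the theorem are literally conditions (1) and (2) of the definition, and condition (3) follows from condition (3) of the definition by taking any single point $\bm y_0 \in \mathcal{B}(\bm 0;\Delta)$ and setting $\kappa_{ef}' = \kappa_{ef}$. So the content of the theorem is the ``if'' direction: from the Hessian bound (1), the gradient bound (2), and closeness in function value at a \emph{single} point $\bm y_0$, one must recover the full function-value bound $|f(\bm x + \bm P \bm y) - m_\Delta(\bm y)| \leq \kappa_{ef}\Delta^3$ for all $\bm y \in \mathcal{B}(\bm 0;\Delta)$, with a new constant $\kappa_{ef}$ depending on $\kappa_{ef}'$, $\kappa_{eg}$, $\kappa_{eh}$, the Lipschitz constant of $\nabla^2 f$, and $\|\bm P\|$.

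The main idea is to define the error function $e(\bm y) = f(\bm x + \bm P \bm y) - m_\Delta(\bm y)$ on $\mathcal{B}(\bm 0;\Delta)$ and integrate its derivatives along the segment from $\bm y_0$ to an arbitrary $\bm y$. First I would note $\nabla e(\bm y) = \bm P^\top \nabla f(\bm x + \bm P\bm y) - \nabla m_\Delta(\bm y)$, so condition (2) gives $\|\nabla e(\bm y)\| \leq \kappa_{eg}\Delta^2$ uniformly. Then by the fundamental theorem of calculus along the segment $\bm y_0 + t(\bm y - \bm y_0)$, $t\in[0,1]$ (which stays in the convex ball $\mathcal{B}(\bm 0;\Delta)$),
\begin{equation*}
    |e(\bm y) - e(\bm y_0)| = \left| \int_0^1 \nabla e(\bm y_0 + t(\bm y - \bm y_0))^\top (\bm y - \bm y_0)\, dt \right| \leq \kappa_{eg}\Delta^2 \, \|\bm y - \bm y_0\| \leq 2\kappa_{eg}\Delta^3,
\end{equation*}
using $\|\bm y - \bm y_0\| \leq 2\Delta$. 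Combining with $|e(\bm y_0)| \leq \kappa_{ef}'\Delta^3$ from condition (3) yields $|e(\bm y)| \leq (\kappa_{ef}' + 2\kappa_{eg})\Delta^3$ for all $\bm y$, so one may take $\kappa_{ef} = \kappa_{ef}' + 2\kappa_{eg}$. Notice that in this argument the Hessian bound (1) and the Lipschitz continuity of $\nabla^2 f$ are not actually needed for the function-value conclusion; they are carried along because they are part of what ``$\bm P$-fully quadratic'' asserts and presumably because the gradient bound (2) in a genuine model-construction context would itself be derived from (1) plus Lipschitz continuity. I would remark on this so the reader sees the logical structure clearly.

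The step I expect to require the most care is simply making sure the segment of integration lies within the domain $\mathcal{B}(\bm 0;\Delta)$ on which the gradient bound (2) is assumed to hold — this is fine because the ball is convex and both $\bm y_0$ and $\bm y$ lie in it — and being careful that the bound $\|\bm y - \bm y_0\| \le 2\Delta$ is the only place the diameter of the ball enters, so the final constant is clean. There is no real obstacle here; the theorem is a soft consequence of the mean value inequality, and the only subtlety is bookkeeping the constants and stating explicitly that the ``only if'' direction is trivial with $\kappa_{ef} := \kappa_{ef}'$ being replaceable by $\kappa_{ef}' = \kappa_{ef}$ in the reverse reading.
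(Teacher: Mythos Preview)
Your proposal is correct and follows essentially the same approach as the paper: both define the error $e(\bm y)=f(\bm x+\bm P\bm y)-m_\Delta(\bm y)$, use the gradient bound (2) together with the mean value inequality along the segment from $\bm y_0$ to $\bm y$ to get $|e(\bm y)-e(\bm y_0)|\le 2\kappa_{eg}\Delta^3$, and then add the single-point bound to obtain $\kappa_{ef}=\kappa_{ef}'+2\kappa_{eg}$. Your write-up is in fact slightly more complete than the paper's, since you explicitly dispatch the ``only if'' direction and verify that the segment stays in the convex ball.
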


\begin{proof}{Proof}
    We use $\kappa_{ef} = 2\kappa_{eg}+\kappa_{ef}'$ and we prove that the condition in Definition \ref{P-fully_quadratic} is satisfied. 
    Take 
    \[f(\ul{x} + \ml{P}\ul{y}) =: \hat{f}(\ul{y}).\]
    Then
    \[
        \begin{aligned}
            &|f(\ul{x} + \ml{P}\ul{y})-m_{\Delta}(\ul{y})| \\ 
            \leq& |(f(\ul{x} + \ml{P}\ul{y})-m_{\Delta}(\ul{y})) - (f(\ul{x} + \ml{P}\ul{y_0})-m_{\Delta}(\ul{y_0}))|+\vert f(\ul{x} + \ml{P}\ul{y_0})-m_{\Delta}(\ul{y_0})\vert\\
            =& |(\hat{f} - m_{\Delta})(\ul{y}) - (\hat{f} - m_{\Delta})(\ul{y_0})|+\kappa_{ef}'\Delta^3\\
            \leq& \norm{\nabla (\hat{f} - m_{\Delta})( \ul{\xi})} \norm{\bm y - \bm y_0} + \kappa_{ef}'\Delta^3,  \ (\text{since \ } \exists\,  \,  \ul{\xi} = \theta \ul{y} + (1 - \theta) \ul{y_0}, \theta \in (0, 1) )\\
            \leq& \norm{\ml{P}^{\top} \nabla f(\ul{x} + \ml{P}\ul{\xi}) - m_{\Delta}( \ul{\xi})} \norm{\bm y - \bm y_0} + \kappa_{ef}'\Delta^3,  \ (\text{since \ } \exists\,  \,  \ul{\xi} = \theta \ul{y} + (1 - \theta) \ul{y}_0, \theta \in (0, 1) )\\
            \leq& 2\kappa_{e g} \Delta^3+ \kappa_{ef}' \Delta^3\\
            =& \kappa_{e f} \Delta^3, \quad \forall \,   \ul{y} \in \mathcal{B}(\ul{0};\Delta).
        \end{aligned}
    \]  
\end{proof}
    
Inspired by Theorem \ref{theorempre}, when focusing on quadratic models, it would be interesting to observe that Definition \ref{P-fully_quadratic} could be obtained even if we only check one point for each condition.

%m quadratic
\begin{theorem}\label{thm:3 points->quasi-fully-quadratic}
  Assume that $f: \RR^n \rightarrow \RR$ is a continuous differentiable objective function whose Hessian is Lipschitz continuous, with Lipschitz constant $L$, $\bm P \in \RR^{n \times q}$. Given $\Delta > 0$, a \textbf{quadratic} model $m_{\Delta}$ is $\ml{P}$-fully quadratic (with respect to $f$) in $\mathcal{B}(\bm x;\Delta)\subset \RR^n$ if and only if there exists $\kappa_{ef}', \kappa_{eg}', \kappa_{eh}' > 0$, such that 

    (1) The error between the Hessian of the model and the Hessian of the function satisfies
    $$
    \exists\,  \ul{y_2} \in \mathcal{B}(\ul{0}; \Delta), \text{ such that }
    \|\ml{P}^{\top} \nabla^2 f(\ul{x + P y_2}) \ml{P} -\nabla^2 m_{\Delta}(\ul{y_2})\| \leq \kappa_{eh}' \Delta{\color{black}.}
    $$

    (2) The error between the gradient of the model and the gradient of the function satisfies
    $$
      \exists\,  \ul{y_1} \in \mathcal{B}(\ul{0}; \Delta), \text{ such that }
      \|\ml{P}^{\top} \nabla f(\ul{x + P y_1})-\nabla m_{\Delta}(\ul{y_1})\| \leq \kappa_{eg}' \Delta^2.
    $$

    (3) There is at least one point which are closed in function values,
    $$
      \exists\,  \ul{y_0} \in \mathcal{B}(\ul{0}; \Delta), \text{ such that } \norm{f(\ul{x} + \ml{P}\ul{y_0}) - m_{\Delta} (\bm y_0)} \leq \kappa_{ef}' \Delta^3.
    $$
\end{theorem}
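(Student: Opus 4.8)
The plan is to prove the nontrivial (``if'') direction by a two-step bootstrap: first upgrade the single-point Hessian bound in (1) to a bound valid on the whole ball, then use that uniform Hessian bound together with the single-point gradient bound in (2) to obtain a uniform gradient bound, and finally invoke Theorem \ref{theorempre} with the single-point function-value bound in (3) to conclude that $m_\Delta$ is $\bm P$-fully quadratic. The ``only if'' direction is immediate: if $m_\Delta$ is $\bm P$-fully quadratic, then conditions (1)--(3) of Definition \ref{P-fully_quadratic} hold for \emph{every} $\bm y\in\mathcal{B}(\bm 0;\Delta)$, so in particular they hold at any single point, and one may take $\bm y_0=\bm y_1=\bm y_2=\bm 0$ with $\kappa_{eh}'=\kappa_{eh}$, $\kappa_{eg}'=\kappa_{eg}$, $\kappa_{ef}'=\kappa_{ef}$.

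For the Hessian step I would use crucially that $m_\Delta$ is quadratic, so $\nabla^2 m_\Delta$ is a constant matrix and $\nabla^2 m_\Delta(\bm y)=\nabla^2 m_\Delta(\bm y_2)$ for all $\bm y$. Splitting
\[
\bm P^\top\nabla^2 f(\bm x+\bm P\bm y)\bm P-\nabla^2 m_\Delta(\bm y)
=\bm P^\top\big(\nabla^2 f(\bm x+\bm P\bm y)-\nabla^2 f(\bm x+\bm P\bm y_2)\big)\bm P
+\big(\bm P^\top\nabla^2 f(\bm x+\bm P\bm y_2)\bm P-\nabla^2 m_\Delta(\bm y_2)\big),
\]
the second term is at most $\kappa_{eh}'\Delta$ by hypothesis (1), while the first is controlled by $\|\bm P^\top A\bm P\|\le\|\bm P\|^2\|A\|$, the Lipschitz continuity of $\nabla^2 f$ with constant $L$, and $\|\bm P(\bm y-\bm y_2)\|\le 2\|\bm P\|\Delta$, giving at most $2L\|\bm P\|^3\Delta$. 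Hence condition (1) of Definition \ref{P-fully_quadratic} holds for all $\bm y$ with $\kappa_{eh}:=\kappa_{eh}'+2L\|\bm P\|^3$.

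For the gradient step, again using quadraticity, $\nabla m_\Delta$ is affine, so $\nabla m_\Delta(\bm y)=\nabla m_\Delta(\bm y_1)+\nabla^2 m_\Delta(\bm y_1)(\bm y-\bm y_1)$ exactly, while the fundamental theorem of calculus gives $\bm P^\top\nabla f(\bm x+\bm P\bm y)=\bm P^\top\nabla f(\bm x+\bm P\bm y_1)+\bm P^\top\bar{\bm H}\bm P(\bm y-\bm y_1)$ with $\bar{\bm H}=\int_0^1\nabla^2 f\big(\bm x+\bm P\bm y_1+t\bm P(\bm y-\bm y_1)\big)\,dt$. Subtracting, the constant parts contribute at most $\kappa_{eg}'\Delta^2$ by hypothesis (2), and the linear parts contribute $\big\|\bm P^\top\bar{\bm H}\bm P-\nabla^2 m_\Delta(\bm y_1)\big\|\,\|\bm y-\bm y_1\|$, which is at most $\kappa_{eh}\Delta\cdot 2\Delta$: every intermediate point $\bm y_1+t(\bm y-\bm y_1)$ lies in the convex ball $\mathcal{B}(\bm 0;\Delta)$, $\nabla^2 m_\Delta$ is the same constant matrix there, and the uniform Hessian bound from the previous step applies under the integral. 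Thus condition (2) of Definition \ref{P-fully_quadratic} holds for all $\bm y$ with $\kappa_{eg}:=\kappa_{eg}'+2\kappa_{eh}$. With conditions (1) and (2) of Definition \ref{P-fully_quadratic} verified on all of $\mathcal{B}(\bm 0;\Delta)$ and hypothesis (3) supplying the single-point function-value estimate, Theorem \ref{theorempre} directly yields that $m_\Delta$ is $\bm P$-fully quadratic. The only genuine subtlety, and hence the point to handle carefully, is that every step silently relies on $m_\Delta$ being quadratic (constant Hessian, exact first-order expansion of the gradient); for a non-quadratic model the argument breaks down. The remainder is routine bookkeeping of constants and the convexity of the ball to keep all intermediate points in the domain.
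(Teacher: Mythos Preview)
Your proposal is correct and follows essentially the same route as the paper: upgrade the single-point Hessian bound to a uniform one using Lipschitz continuity of $\nabla^2 f$ and the constancy of $\nabla^2 m_\Delta$, then use that uniform Hessian bound together with an integral mean-value argument to upgrade the single-point gradient bound, and finally invoke Theorem~\ref{theorempre} for the function-value condition. Your bookkeeping is in fact slightly more careful than the paper's (you correctly carry the factor $2$ from $\|\bm y-\bm y_2\|\le 2\Delta$, and you make explicit the role of quadraticity and convexity of the ball), but the ideas and structure are the same.
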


\begin{proof}{Proof}
  (1) Take any $\ul{y} \in \mathcal{B}(\ul{0}; \Delta)$, we use $\kappa_{eh}= \norm{\bm P}^3 L + \kappa_{eh}'$ and show it satisfies the condition in Definition \ref{P-fully_quadratic}.
  \[
    \begin{aligned}
      &\|\ml{P}^{\top} \nabla^2 f(\bm x + \bm P \bm y) \ml{P} -\nabla^2 m_{\Delta}(\bm y)\|\\ 
      \leq& \|(\ml{P}^{\top} \nabla^2 f(\bm x+\bm P\bm y) \bm{P} -\nabla^2 m_{\Delta}(\bm y)) - (\bm{P}^{\top} \nabla^2 f(\bm x + \bm{P} \bm y_2) \bm{P} -\nabla^2 m_{\Delta}(\bm y_2))\| \\
      &+ \|(\bm{P}^{\top} \nabla^2 f(\bm x + \bm{P} \bm y_2) \bm{P} -\nabla^2 m_{\Delta}(\bm y_2))\|\\
      \leq& \Vert\bm{P}^{\top} \nabla^2 f(\bm x + \bm{P} \bm y) \bm{P}-\bm{P}^{\top} \nabla^2 f(\bm x + \bm{P} \bm y_2) \bm{P} \Vert + \kappa_{eh}' \Delta\\
      \leq& \norm{\bm{P}}^3 L \norm{\bm{y} - \bm{y}_2} + \kappa_{eh}' \Delta\\
      \leq& (\norm{\bm{P}}^3 L + \kappa_{eh}') \Delta.
    \end{aligned}
  \]

  (2) Take any $\ul{y} \in \mathcal{B}(\ul{0}; \Delta)$, then 
  \[
    \begin{aligned}
      &\|\ml{P}^{\top} \nabla f(\ul{x + P y})-\nabla m_{\Delta}(\ul{y})\| \\
      \leq& \|\ml{P}^{\top} \nabla f(\ul{x + P y_1})-\nabla m_{\Delta}(\ul{y_1})\|\\
      +& \|\ml{P}^{\top} \nabla (f(\ul{x + P y}) - f(\ul{x + P y_1})) -\nabla (m_{\Delta}(\ul{y}) - m_{\Delta}(\ul{y_1}))\| \\
      \leq& \kappa_{eg}' \Delta^2 + 2(\Vert\bm{P}\Vert^3 L + \kappa_{eh}') \Delta^2,
    \end{aligned}
  \]
  where the last inequality is derived by the mean-value theorem for vector-valued functions.
    
  (3) It suffices to apply Theorem \ref{theorempre}.  
  \end{proof}

\begin{proposition}\label{prop: m_k fully quadratic}
    Assume that $f:\mathbb{R}^n\to\mathbb{R}$ is twice continuously differentiable, whose Hessian is Lipschitz continuous with Lipschitz constant $L$. Let $m_k$ be the quadratic model defined in Section \ref{section: Trust region subproblem on subspace}. Then $m_k$ is $\bm P_k$-fully quadratic. That is, there exists $\kappa_{ef}, \kappa_{eg}, \kappa_{eh} > 0$ such that $m_k$ satisfies Definition \ref{P-fully_quadratic}, where
    \begin{equation*}
        m_k(\bm y)= f(\bm{x}_k)+\bm{P}_k^\top \nabla f(\bm{x}_k) \bm y+\frac{1}{2}\bm y^{\top}\bm{P}_k^\top\nabla^2f(\bm{x}_k)\bm{P}_k\bm y,
    \end{equation*}
    and $\bm{P}_k=[\bm{a}_k^{(1)},\cdots,\bm{a}_k^{(q)}]\in\mathbb{R}^{n\times q}$ is the projection matrix of {\color{black}$q$}-dimensional subspace $\mathcal{S}_k$.
\end{proposition}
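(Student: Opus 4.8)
The plan is to deduce this from the single-point criterion of Theorem~\ref{thm:3 points->quasi-fully-quadratic}, exploiting the facts that $m_k$ is quadratic and $f$ has a Lipschitz Hessian. First I would introduce the restriction $\hat f(\bm y) := f(\bm x_k + \bm P_k\bm y)$ of $f$ to the affine subspace $\bm x_k + \mathrm{range}(\bm P_k)$; by the chain rule $\nabla\hat f(\bm y) = \bm P_k^\top\nabla f(\bm x_k+\bm P_k\bm y)$ and $\nabla^2\hat f(\bm y) = \bm P_k^\top\nabla^2 f(\bm x_k+\bm P_k\bm y)\bm P_k$. The structural observation that drives the whole argument is that $m_k$ is exactly the second-order Taylor polynomial of $\hat f$ at $\bm y=\bm 0$, so that $\nabla^2 m_k(\bm y)\equiv \bm P_k^\top\nabla^2 f(\bm x_k)\bm P_k$, $\nabla m_k(\bm 0)=\bm P_k^\top\nabla f(\bm x_k)$ and $m_k(\bm 0)=f(\bm x_k)$.

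Given this, the three hypotheses of Theorem~\ref{thm:3 points->quasi-fully-quadratic} are checked at the common point $\bm y_0=\bm y_1=\bm y_2=\bm 0\in\mathcal{B}(\bm 0;\Delta)$, where the Hessian mismatch $\bm P_k^\top\nabla^2 f(\bm x_k)\bm P_k-\nabla^2 m_k(\bm 0)$, the gradient mismatch $\bm P_k^\top\nabla f(\bm x_k)-\nabla m_k(\bm 0)$, and the value mismatch $f(\bm x_k)-m_k(\bm 0)$ all vanish; hence the hypotheses hold with any positive $\kappa_{ef}',\kappa_{eg}',\kappa_{eh}'$ (e.g.\ all equal to $1$). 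Theorem~\ref{thm:3 points->quasi-fully-quadratic} then gives that $m_k$ is $\bm P_k$-fully quadratic, and one reads off the explicit constants from its proof, namely $\kappa_{eh}=\|\bm P_k\|^3 L+\kappa_{eh}'$, $\kappa_{eg}=\kappa_{eg}'+2(\|\bm P_k\|^3 L+\kappa_{eh}')$ and $\kappa_{ef}=2\kappa_{eg}+\kappa_{ef}'$.

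For completeness I would also sketch the direct route, which sidesteps Theorem~\ref{thm:3 points->quasi-fully-quadratic}: apply Taylor's theorem with integral remainder to $\hat f$ at $\bm 0$ and use that $\nabla^2\hat f$ is Lipschitz with constant $\|\bm P_k\|^3 L$ (from $\|\nabla^2\hat f(\bm y)-\nabla^2\hat f(\bm y')\|\le\|\bm P_k\|^2 L\|\bm P_k(\bm y-\bm y')\|$). This yields $\|\nabla^2\hat f(\bm y)-\nabla^2 m_k(\bm y)\|\le\|\bm P_k\|^3 L\Delta$, and integrating once and twice gives $\|\nabla\hat f(\bm y)-\nabla m_k(\bm y)\|\le\tfrac12\|\bm P_k\|^3 L\Delta^2$ and $|\hat f(\bm y)-m_k(\bm y)|\le\tfrac16\|\bm P_k\|^3 L\Delta^3$ on $\mathcal{B}(\bm 0;\Delta)$, i.e.\ Definition~\ref{P-fully_quadratic} with $\kappa_{eh}=\|\bm P_k\|^3 L$, $\kappa_{eg}=\tfrac12\|\bm P_k\|^3 L$, $\kappa_{ef}=\tfrac16\|\bm P_k\|^3 L$. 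There is no genuine obstacle in this proof; the only thing requiring care is the chain-rule bookkeeping that keeps track of the factors of $\|\bm P_k\|$, and since the columns of $\bm P_k$ are orthonormal (output of Gram--Schmidt) one in fact has $\|\bm P_k\|_2=1$, so all constants collapse to multiples of $L$.
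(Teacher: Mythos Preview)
Your proposal is correct and follows exactly the paper's own proof: the paper likewise invokes Theorem~\ref{thm:3 points->quasi-fully-quadratic} (together with Theorem~\ref{theorempre}) with $\bm y_0=\bm y_1=\bm y_2=\bm 0$, and records the constants $\kappa_{eh}=L$, $\kappa_{eg}=2L$, $\kappa_{ef}=4L$, which are precisely your formulas with $\|\bm P_k\|=1$ and the single-point errors equal to zero. Your additional direct Taylor argument is a nice supplement but not needed.
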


\begin{proof}{Proof}
     By Theorem \ref{theorempre} and Theorem \ref{thm:3 points->quasi-fully-quadratic}, taking $\bm y_0=\bm y_1=\bm y_2=\bm 0$ immediately gives the result, where $\kappa_{eh}=L,\ \kappa_{eg}=2L,\ \kappa_{ef}=4L$.  
\end{proof}

\begin{remark}\label{remark: transitivity of fully quadratic}
    Fully-quadratic possesses some transitivity. If model $m$ is $\bm P$-fully quadratic with respect to $f$, and model $m$ is fully quadratic with respect to model $\hat{m}$, then model $m$ is also $\bm P$-fully quadratic with respect to $f$.
\end{remark}

By Remark \ref{remark: transitivity of fully quadratic} and Proposition \ref{prop: m_k fully quadratic}, cubic $m_k$ is also $\bm P_k$-fully quadratic with respect to $f$. Moreover, the proof of Proposition \ref{prop: m_k fully quadratic} shows that constants $\kappa_{ef},\kappa_{eg},\kappa_{eh}$ do not rely on the radius $\Delta_k$, implying that the quadratic $m_k$ is $\bm P_k$-fully quadratic on the whole space $\mathbb{R}^q$, therefore so does its cubic counterpart. Theorem \ref{thm: interpolation model is fully quadratic} of \citet{Audet2017DerivativeFreeAB} shows that interpolation models are fully quadratic when the interpolation set is carefully chosen, therefore if quadratic $m_k$ is obtained from a proper interpolation model $Q$, then the cubic $m_k$ is naturally $\bm P_k$-fully quadratic, which provides a practical way to construct $\bm P_k$-fully quadratic model. 
\begin{theorem}\label{thm: interpolation model is fully quadratic}
    Let $\bm x\in\mathbb{R}^n$, $f$ is twice continuously differentiable and $\nabla^2 f(\bm y)$ is Lipschitz continuous on $\mathcal{B}(\bm x;\bar{\Delta})$.  Let $\mathbb{Y}=\{\bm y^{(0)},\bm y^{(1)},\cdots,\bm y^{(l)}\}$ be poised for quadratic interpolation with $\bm y^{(0)}=\bm x$, $l=(n+1)(n+2)/2-1$, and $\Delta=\overline{\rm diam}(\mathbb{Y})\leq \bar{\Delta}$. Let $Q(\bm y)$ be the quadratic interpolation function of $f$ over $\mathbb{Y}$. Then, there exists positive $\kappa_{ef},\kappa_{eg}$ and $\kappa_{eh}$ such that for all $\bm y\in\mathcal{B}(\bm x;\Delta)$,
    \begin{equation*}
    \begin{aligned}
        \Vert f(\bm y)-Q(\bm y)\Vert\leq \kappa_{ef}\Delta^{3},\ 
        \Vert \nabla f(\bm y)-\nabla Q(\bm y)\Vert\leq \kappa_{eg}\Delta^{2},\ 
        \Vert \nabla^2 f(\bm y)-\nabla^2 Q(\bm y)\Vert\leq \kappa_{eh}\Delta.
    \end{aligned}     
    \end{equation*}
 
\end{theorem}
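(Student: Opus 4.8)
The plan is to split the three desired estimates into a Taylor part and an interpolation part. Let $T(\bm y) = f(\bm x) + \nabla f(\bm x)^{\top}(\bm y - \bm x) + \tfrac12 (\bm y - \bm x)^{\top}\nabla^2 f(\bm x)(\bm y - \bm x)$ be the second-order Taylor polynomial of $f$ at $\bm x$. Since $\nabla^2 f$ is $L$-Lipschitz on $\mathcal{B}(\bm x;\bar{\Delta})\supseteq\mathcal{B}(\bm x;\Delta)$, the integral form of Taylor's remainder gives, for every $\bm y\in\mathcal{B}(\bm x;\Delta)$, the standard bounds $|f(\bm y)-T(\bm y)|\le \tfrac{L}{6}\Delta^3$, $\|\nabla f(\bm y)-\nabla T(\bm y)\|\le \tfrac{L}{2}\Delta^2$, and $\|\nabla^2 f(\bm y)-\nabla^2 T(\bm y)\|\le L\Delta$. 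Hence, by the triangle inequality, it suffices to prove that $Q - T$ satisfies the same three bounds up to a constant depending only on $n$, $L$, and the geometry of $\mathbb{Y}$.

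For this I would rescale. Put $\hat{\bm y}^{(i)} = (\bm y^{(i)}-\bm x)/\Delta$, so that $\hat{\mathbb{Y}}=\{\hat{\bm y}^{(0)},\dots,\hat{\bm y}^{(l)}\}$ lies in the unit ball (since $\bm y^{(0)}=\bm x\in\mathbb{Y}$ and $\Delta=\overline{\mathrm{diam}}(\mathbb{Y})$) and remains poised for quadratic interpolation, with a $\Lambda$-poisedness constant $\Lambda$ invariant under this affine change of variables, as it measures the shape and not the size of $\mathbb{Y}$. Let $\phi=(\phi_0,\dots,\phi_l)$ be the monomial basis of the space of polynomials of degree at most two in $n$ variables, $l+1=(n+1)(n+2)/2$, and $M(\phi,\hat{\mathbb{Y}})$ the associated sample matrix; poisedness means $M$ is invertible with $\|M^{-1}\|$ bounded by a constant $C_1=C_1(n,\Lambda)$. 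Now $Q-T$ is a polynomial of degree at most two whose value at $\bm y^{(i)}$ is the Taylor residual $f(\bm y^{(i)})-T(\bm y^{(i)})$, of size at most $\tfrac{L}{6}\Delta^3$. Expanding $(Q-T)(\bm x+\Delta\,\cdot\,)$ in the basis $\phi$, its coefficient vector equals $M^{-1}$ times the vector of these residuals, hence has norm at most $C_1\sqrt{l+1}\,\tfrac{L}{6}\Delta^3$; bounding $\phi_j$ and its first two derivatives on the unit ball then yields $|(Q-T)(\bm x+\Delta\hat{\bm y})|$, $\|\nabla_{\hat{\bm y}}(\cdots)\|$, $\|\nabla^2_{\hat{\bm y}}(\cdots)\|\le C_2 L\Delta^3$ for all $\hat{\bm y}\in\mathcal{B}(\bm 0;1)$, with $C_2=C_2(n,\Lambda)$.

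Undoing the scaling via the chain rule ($\nabla_{\bm y}=\Delta^{-1}\nabla_{\hat{\bm y}}$, $\nabla^2_{\bm y}=\Delta^{-2}\nabla^2_{\hat{\bm y}}$) turns these into $|Q(\bm y)-T(\bm y)|\le C_2 L\Delta^3$, $\|\nabla Q(\bm y)-\nabla T(\bm y)\|\le C_2 L\Delta^2$, $\|\nabla^2 Q(\bm y)-\nabla^2 T(\bm y)\|\le C_2 L\Delta$ on $\mathcal{B}(\bm x;\Delta)$; adding the Taylor bounds of the first paragraph and taking $\kappa_{ef}=C_2 L+\tfrac{L}{6}$, $\kappa_{eg}=C_2 L+\tfrac{L}{2}$, $\kappa_{eh}=(C_2+1)L$ completes the argument. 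The step I expect to be the real work is the middle one: verifying carefully that the $\Lambda$-poisedness constant is invariant under the shift-and-scale and that it controls $\|M^{-1}\|$ uniformly, together with the minor bookkeeping relating $\Delta=\overline{\mathrm{diam}}(\mathbb{Y})$ to the radius of the ball actually containing $\hat{\mathbb{Y}}$, which may cost a harmless factor of two in the constants. Since the theorem is quoted from \citet{Audet2017DerivativeFreeAB}, I would invoke the poisedness machinery developed there rather than reconstruct it, and only assemble the estimates above.
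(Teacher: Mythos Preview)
Your proposal is correct and follows the standard argument for this result; however, the paper does not actually prove this theorem. It is stated there as a quotation from \citet{Audet2017DerivativeFreeAB} with no accompanying proof, so there is nothing in the paper to compare against. Your sketch---splitting $f-Q$ into the Taylor remainder $f-T$ and the polynomial $T-Q$, rescaling the sample set into the unit ball, using poisedness to control $\|M(\phi,\hat{\mathbb{Y}})^{-1}\|$ and hence the coefficients of $(Q-T)(\bm x+\Delta\,\cdot)$, and then undoing the scaling via the chain rule---is exactly the approach used in the cited source (and in the earlier treatment of Conn, Scheinberg, and Vicente), so you are effectively reconstructing the reference's proof rather than offering an alternative to anything in the present paper. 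Your closing remark that you would invoke the poisedness machinery from the cited book rather than rebuild it is entirely appropriate here.
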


So far, we have only given definitions and ways of constructing fully quadratic models. For the rest of this section, we will explain from different aspects that $\bm P_k$-fully quadratic models are a proper substitute for quadratic $m_k$ in DFO settings.
Such properness is mainly depicted by the function value decrease achieved by the solution of a fully quadratic model in solving a trust-region subproblem. More specifically, let $\hat{m}_k$ be a $\bm P_k$-fully quadratic model (with respect to $f$, probably obtained through interpolation) and let $m_k$ denote the quadratic $m_k$ defined in Subsection \ref{section: Trust region subproblem on subspace}. Then $\hat{m}_k$ is fully quadratic with respect to $m_k$.

Hence, it is reasonable to assume that the difference of the two models' gradients behaves like the product of a uniformly distributed random variable and a standard normal distributed random variable, i.e.,  
\begin{equation*}
        \nabla m_k(\bm s)=\nabla \hat{m}_k(\bm s)+\kappa_{eg}\Delta^2c\frac{\bm b}{\Vert \bm b\Vert},
    \end{equation*}
    where $c\sim U[0,1]$ and the coordinates of $\bm b$ are i.i.d. standard normally distributed. We claim that under above assumptions, the expectation of $m_k(\bm 0)-m_k(\hat{\bm s})$ exactly equals to the expectation of $\hat{m}_k(\bm 0)-\hat{m}_k(\hat{\bm s})$. Theorem \ref{thm:error bound of fully quadratic models} states and proves this claim.

\begin{theorem}\label{thm:error bound of fully quadratic models}
    If 
    \begin{equation}\label{eq:nabl m and nabla hat_m}
        \nabla m_k(\bm y)=\nabla \hat{m}_k(\bm y)+\kappa_{eg}\Delta^2c\frac{\bm b}{\Vert \bm b\Vert},
    \end{equation}
    where $c\sim U[0,1]$ and the coordinates of $\bm b$ are i.i.d. standard normally distributed.    
    Let $\hat{\bm y}$ be the solution of the trust-region subproblem of $\hat{m}_k$ obtained by Algorithm \ref{algo:tcg with radius, steps form}.
    Then we have
    \begin{equation*}
        \mathbb{E}_{c,\bm b}(m_k(\bm 0)-m_k(\hat{\bm y}))=\mathbb{E}_{c,\bm b}(\hat{m}_k(\bm 0)-\hat{m}_k(\hat{\bm y})).
    \end{equation*}
\end{theorem}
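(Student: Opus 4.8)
The plan is to reduce the asserted identity of expectations to a single elementary fact: an isotropically-oriented random direction has mean zero. First I would extract two structural consequences of \eqref{eq:nabl m and nabla hat_m}. Since the perturbation $\kappa_{eg}\Delta^2 c\,\bm b/\Vert\bm b\Vert$ on the right-hand side does not depend on $\bm y$, differentiating in $\bm y$ gives $\nabla^2 m_k\equiv\nabla^2\hat m_k$; denote this common Hessian by $\bm B_k$. Because $m_k$ and $\hat m_k$ are quadratic, we may write $m_k(\bm y)=m_k(\bm 0)+\nabla m_k(\bm 0)^\top\bm y+\tfrac{1}{2}\bm y^\top\bm B_k\bm y$ and likewise for $\hat m_k$, so the constant terms cancel in each decrease and the quadratic terms $\tfrac{1}{2}\hat{\bm y}^\top\bm B_k\hat{\bm y}$ are common to both. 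Subtracting,
\[
\bigl(m_k(\bm 0)-m_k(\hat{\bm y})\bigr)-\bigl(\hat m_k(\bm 0)-\hat m_k(\hat{\bm y})\bigr)=-\bigl(\nabla m_k(\bm 0)-\nabla\hat m_k(\bm 0)\bigr)^\top\hat{\bm y}=-\kappa_{eg}\Delta^2\,c\,\frac{\bm b^\top\hat{\bm y}}{\Vert\bm b\Vert}.
\]

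Next I would take expectations of this pointwise identity over $(c,\bm b)$. The key observation is that $\hat{\bm y}$ is the output of Algorithm \ref{algo:tcg with radius, steps form} applied to the subproblem of $\hat m_k$, hence a deterministic measurable function of $\nabla\hat m_k(\bm 0)$ and $\bm B_k$ alone; the noise pair $(c,\bm b)$ enters \eqref{eq:nabl m and nabla hat_m} only through $m_k$, so $\hat{\bm y}$ is independent of $(c,\bm b)$. Together with the (assumed) independence of $c$ and $\bm b$, this gives
\[
\mathbb{E}_{c,\bm b}\!\left[-\kappa_{eg}\Delta^2\,c\,\frac{\bm b^\top\hat{\bm y}}{\Vert\bm b\Vert}\right]=-\kappa_{eg}\Delta^2\,\mathbb{E}[c]\,\mathbb{E}\!\left[\frac{\bm b}{\Vert\bm b\Vert}\right]^{\!\top}\hat{\bm y},
\]
where interchanging expectation with the finite-dimensional inner product is legitimate because the integrand is dominated by $\Vert\hat{\bm y}\Vert$ (indeed $c\in[0,1]$ and $\bm b/\Vert\bm b\Vert$ is a unit vector).

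Finally I would invoke the symmetry of the multivariate standard normal: the law of $\bm b$ is invariant under $\bm b\mapsto-\bm b$, so the same holds for the unit vector $\bm b/\Vert\bm b\Vert$, forcing $\mathbb{E}[\bm b/\Vert\bm b\Vert]=-\mathbb{E}[\bm b/\Vert\bm b\Vert]$ and hence $\mathbb{E}[\bm b/\Vert\bm b\Vert]=\bm 0$. Substituting this back shows the expectation of the difference vanishes, which is exactly $\mathbb{E}_{c,\bm b}\bigl(m_k(\bm 0)-m_k(\hat{\bm y})\bigr)=\mathbb{E}_{c,\bm b}\bigl(\hat m_k(\bm 0)-\hat m_k(\hat{\bm y})\bigr)$. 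I expect the only genuinely delicate point to be the probabilistic bookkeeping in the middle step — making explicit that $\hat{\bm y}$ depends on $(c,\bm b)$ only through the unperturbed model $\hat m_k$, so that the expectation factors; everything else is exact quadratic algebra plus the mean-zero property of a uniform direction.
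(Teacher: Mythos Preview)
Your proposal is correct and follows essentially the same route as the paper: both reduce the difference of the two decreases to the cross term $-\kappa_{eg}\Delta^2 c\,\hat{\bm y}^\top\bm b/\Vert\bm b\Vert$ and then use that $\mathbb{E}[\bm b/\Vert\bm b\Vert]=\bm 0$. The only cosmetic difference is that the paper expresses the quadratic decrease via the midpoint-gradient identity $m(\bm 0)-m(\bm y)=-\bm y^\top\nabla m(\tfrac{1}{2}\bm y)$ rather than your explicit expansion, and your justification of the mean-zero step (via the symmetry $\bm b\mapsto-\bm b$) is arguably cleaner than the paper's appeal to ``independence of $b_i$'s.''
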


\begin{proof}{Proof}
Denote $\bm b/\Vert\bm b\Vert$ by $\hat{\bm b}$, then
    \begin{equation*}
    \begin{aligned}
       \hat{m}_k(\bm 0)-\hat{m}_k(\hat{\bm y})&=\mathbb{E}(\hat{m}_k(\bm 0)-\hat{m}_k(\hat{\bm y}))\\
       &=-\mathbb{E}\left(\hat{\bm y}^\top\nabla m_k(\frac{1}{2}\hat{\bm y}) \right)+\mathbb{E}(\kappa_{eg}\Delta^2c\hat{\bm b}^\top\hat{\bm y}).
    \end{aligned}    
    \end{equation*}
It is followed by the linearity of expectation that
\begin{equation*}
\begin{aligned}
     \mathbb{E}_{c,\bm b}(\kappa_{eg}\Delta^2c\hat{\bm b}^\top \hat{\bm y})=\frac{\kappa_{eg}\Delta^2}{2}\sum_{i}y_i\mathbb{E}_{b_i}(\frac{b_i}{\Vert\bm b\Vert}),
\end{aligned}   
\end{equation*}
where {\color{black}$\bm b=(b_1\cdots,b_q)^\top$ and $\hat{\bm y}=(y_1,\cdots,y_q)^\top$}. By the independence of $b_i$'s, this implies $\mathbb{E}_{c,\bm b}(\kappa_{eg}\Delta^2c\hat{\bm b}^\top\hat{\bm y})=0$. Therefore
\begin{equation*}
\begin{aligned}
     {\color{black}\mathbb{E}_{c,\bm b}}\left(\hat{m}_k(\bm 0)-\hat{m}_k(\hat{\bm y})\right)&= -\mathbb{E}_{c,\bm b}\left(\hat{\bm y}^\top\nabla m_k(\frac{1}{2}\hat{\bm y})\right)\\
     &=\mathbb{E}_{c,\bm b}\left(m_k(\bm 0)-m_k(\hat{\bm y})\right).
\end{aligned}
\end{equation*}  

\end{proof}

We also have the following theorem, which reflects the closeness between $\hat{m}_k$ and $m_k$ from a probability aspect.
\begin{theorem}
    Under the same assumptions and notations of Theorem \ref{thm:error bound of fully quadratic models}, we have
    \begin{equation*}
        \mathbb{P}\left(\hat{m}(\bm 0)-\hat{m}(\hat{\bm y})\leq m(\bm 0)-m(\hat{\bm y})\right)=\frac{1}{2}.
    \end{equation*}
\end{theorem}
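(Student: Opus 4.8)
The plan is to reuse the computation already carried out in the proof of Theorem~\ref{thm:error bound of fully quadratic models} and track the distribution of the error term rather than just its mean. From that proof we have the identity
\begin{equation*}
    \bigl(\hat{m}_k(\bm 0)-\hat{m}_k(\hat{\bm y})\bigr)-\bigl(m_k(\bm 0)-m_k(\hat{\bm y})\bigr)=\kappa_{eg}\Delta^2 c\,\hat{\bm b}^\top\hat{\bm y},
\end{equation*}
where $\hat{\bm b}=\bm b/\Vert\bm b\Vert$, $c\sim U[0,1]$, and $\bm b$ has i.i.d.\ standard normal coordinates. So the event $\{\hat{m}(\bm 0)-\hat{m}(\hat{\bm y})\leq m(\bm 0)-m(\hat{\bm y})\}$ is exactly the event $\{\kappa_{eg}\Delta^2 c\,\hat{\bm b}^\top\hat{\bm y}\leq 0\}$. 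Since $\kappa_{eg}\Delta^2>0$ and $c\geq 0$ (with $c=0$ a probability-zero event, or in any case forcing the quantity to equal $0$), the event reduces up to a null set to $\{\hat{\bm b}^\top\hat{\bm y}\leq 0\}$, i.e.\ $\{\bm b^\top\hat{\bm y}\leq 0\}$.

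Next I would argue that $\mathbb{P}(\bm b^\top\hat{\bm y}\leq 0)=\tfrac12$ by symmetry. One subtlety is that $\hat{\bm y}$ itself depends on $\hat m_k$, hence on $\bm b$ through \eqref{eq:nabl m and nabla hat_m}; but $\hat{\bm y}$ is the minimizer associated with $\hat m_k$, whose gradient is $\nabla m_k(\bm y)-\kappa_{eg}\Delta^2 c\,\hat{\bm b}$, and in the theorem's framework $\hat m_k$ (equivalently $\hat{\bm y}$) is treated as given/fixed data while the perturbation direction is what is random --- indeed that is precisely how Theorem~\ref{thm:error bound of fully quadratic models} takes expectations. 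With $\hat{\bm y}$ fixed and nonzero, $\bm b^\top\hat{\bm y}$ is a nondegenerate centered Gaussian (variance $\Vert\hat{\bm y}\Vert^2$), hence continuous and symmetric about $0$, so $\mathbb{P}(\bm b^\top\hat{\bm y}\leq 0)=\mathbb{P}(\bm b^\top\hat{\bm y}\geq 0)=\tfrac12$. If instead one insists on the joint law, the same conclusion follows from the sign-flip symmetry $\bm b\mapsto-\bm b$: this leaves $\hat{\bm b}$ mapped to $-\hat{\bm b}$, and one checks $\hat{\bm y}$ is unchanged (the subproblem solved by Algorithm~\ref{algo:tcg with radius, steps form} for $\hat m_k$ depends on $\bm b$ only through $\hat m_k$, and $\hat m_k$ would have to be specified consistently) --- whichever reading is intended, the map sends $\bm b^\top\hat{\bm y}$ to its negative and is measure-preserving, giving the claim. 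The degenerate case $\hat{\bm y}=\bm 0$ would make both sides of the inequality $0$ and is excluded (or handled separately) since it corresponds to $\nabla\hat m_k(\bm 0)=\bm 0$, a termination scenario.

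Putting the pieces together: $\mathbb{P}\bigl(\hat{m}(\bm 0)-\hat{m}(\hat{\bm y})\leq m(\bm 0)-m(\hat{\bm y})\bigr)=\mathbb{P}(c\,\bm b^\top\hat{\bm y}\leq 0)=\mathbb{P}(\bm b^\top\hat{\bm y}\leq 0)=\tfrac12$, which is the desired identity. The main obstacle, and the only place requiring care, is the dependence issue flagged above: making precise that $\hat{\bm y}$ may be treated as fixed relative to the randomness in the perturbation (consistently with how Theorem~\ref{thm:error bound of fully quadratic models} is set up), or else exhibiting the explicit measure-preserving symmetry $\bm b\mapsto-\bm b$ under which both the inequality flips and $\hat{\bm y}$ is invariant. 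Everything else is an immediate consequence of the algebraic identity borrowed from the previous proof plus the fact that a nondegenerate centered Gaussian assigns probability $\tfrac12$ to each closed half-line.
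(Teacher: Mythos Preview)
Your proposal is correct and follows essentially the same route as the paper: reduce the event to $\{\kappa_{eg}\Delta^2 c\,\hat{\bm b}^\top\hat{\bm y}\le 0\}$, strip the nonnegative scalar, and use that $\bm b^\top\hat{\bm y}\sim\mathcal N(0,\Vert\hat{\bm y}\Vert^2)$ to conclude probability $\tfrac12$. If anything you are more careful than the paper, which silently treats $\hat{\bm y}$ as fixed and does not mention the dependence issue you flag.
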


\begin{proof}{Proof}
    Since $m_k$ and $\hat{m}$ are quadratic models,  \begin{equation*}
        \hat{m}(\bm 0)-\hat{m}(\hat{\bm y})=-\hat{\bm y}^\top\nabla\hat{m}(\frac{1}{2}\hat{\bm y})\,\quad
        m(\bm 0)-m(\hat{\bm y})=-\hat{\bm y}^\top\nabla m(\frac{1}{2}\hat{\bm y}).
    \end{equation*}
    Therefore by \eqref{eq:nabl m and nabla hat_m}, we have \begin{equation*}
    \begin{aligned} \mathbb{P}\left(\hat{m}(\bm 0)-\hat{m}(\hat{\bm y})\leq m(\bm 0)-m(\hat{\bm y})\right)=\mathbb{P}(\kappa_{eg}\Delta^2c\frac{\bm b^\top}{\Vert\bm b^\Vert} \hat{\bm y}\leq 0)=\mathbb{P}(\bm b^\top\hat{\bm y}\leq 0),
    \end{aligned}     
    \end{equation*}
    where $\bm b=(b_1\cdots,b_q)$ and $\hat{\bm y}=(y_1,\cdots,y_q)$. Since $b_i\sim \mathcal{N}(0,1)$, $\sum_{i=1}^{q}b_iy_i\sim\mathcal{N}(0,\Vert\hat{\bm y}\Vert^2)$, which implies
    \begin{equation*}
        \mathbb{P}(\bm b^\top\hat{\bm y}\leq 0)=\mathbb{P}(\sum_{i=1}^{q}b_iy_i\leq 0)=\frac{1}{2}.
    \end{equation*}  
    
\end{proof}

Theorem \ref{thm:error bound of fully quadratic models} suggests that, on average, the function decrease achieved by a $\bm P_k$-fully quadratic model is the same as that of the gradient-based model. Therefore, when the gradient information of the object function is unavailable, we suggest using $\bm P_k$-fully quadratic models as an alternative. However, since the definition of fully quadratic involves {\color{black}``$\forall \bm y\in \mathcal{B}(\bm 0;\Delta)$''} conditions, which is impractical to verify in numerical practice, we introduce the concepts of truncated Newton step \cite{xie2025remuregionalminimalupdating} and truncated Newton step error as a more practical method to measure the closeness between two models.
In this paper, we primarily consider the twice-differentiable function $h$ as a quadratic model. In this case, it is easy to see when $\Vert(\nabla^2m_k(\bm 0))^{-1}\nabla m_k(\bm 0)\Vert\leq \Delta_k$ (where $\bm 0\in\RR^q$ corresponds to $\bm x_k\in\RR^n$), the truncated Newton step $\bm y=\mathcal{N}(m_k,\bm 0,\Delta_k)$ is exactly the solution of $\nabla m_k(\bm y)=\bm 0$. \cite{xie2025remuregionalminimalupdating}.
\begin{definition}[Truncated Newton step error \cite{xie2025remuregionalminimalupdating}]\label{def: truncated newton step error}
Let $h_1,h_2:\RR^q\to \RR$ be two twice differentiable functions. $\bm y\in\RR^q$ is a point and $\Delta>0$ is trust-region radius. Then the truncated Newton step error is 
\begin{equation*}
    \mathrm{Dist}_{\mathcal{N}}(h_1,h_2,\bm y,\Delta):=\frac{1}{\Delta}\Vert\mathcal{N}(h_1,\bm y,\Delta)-\mathcal{N}(h_2,\bm y,\Delta)\Vert.   
\end{equation*}
\end{definition}
Here we introduce Definition \ref{def: truncated newton step error} of truncated Newton step error primarily to illustrate the difference between gradient-free quadratic models $\hat{m}_k$ (obtained through interpolation) and gradient-based quadratic models $m_k$. We compute the truncated Newton step error $\mathrm{Dist}_{\mathcal{N}}(\hat{m}_k,m_k,\bm 0,\Delta_k)$ for 380 problems (run MD-LAMBO for 1000 iterations) and calculated the ratio of solved problems. The figure and analysis are shown in Section \ref{section: numerical results}.

\section{Convergence of Subspace-Based Trust-Region Method}\label{section: convergence}

In this section, we focus on the convergence result of MD-LAMBO, and we claim that the sequence generated by our algorithm converges to a stationary point under some assumptions. We prove such convergence under the trust-region settings. 
At each iteration, the trust-region method solves a subproblem of the form 
\[
\min_{\bm{y} \in \hat{\mathcal{S}}_k, \ \|\bm{y}\| \leq \Delta_k} m_k(\bm{y}),
\]
where $\mathcal{S}_k$ is a low-dimensional subspace, $\hat{\mathcal{S}}_k$ is its projected counterpart and $m_k$ is the quadratic model introduced in Section \ref{section: Trust region subproblem on subspace}, where $\Delta_k$ is the trust-region radius.
    \begin{assumption}\label{assumption: about f}
        $f:\mathbb{R}^n\to\mathbb{R}$ is bounded from below and twice continuously differentiable, whose Hessian is \textcolor{black}{bounded by $\kappa_H$} and Lipschitz continuous with Lipschitz constant $L$. 
    \end{assumption} 
    Let $\mathcal{K}=\{k\in\mathbb{N}:\rho_k\geq\eta_0\}$. Then $\mathcal{K}$ collects the index of all successful iterations. In fact, we will see through the following lemmas that $\mathcal{K}$ is an infinite index set.  \begin{assumption}\label{assumption: many m_k satisfies cauchy assump}
        For any $k\in\mathcal{K}$, there exists $k'\geq k,\ k'\in\mathcal{K}$ such that $m_{k'}$ satisfies Assumption \ref{assumption: cauchy}.
    \end{assumption}
    Suppose Assumption \ref{assumption: many m_k satisfies cauchy assump} is satisfied for the index set $\mathcal{K}$. Let $\{\bm x_k\}_{k\in\mathbb{N}}$ denote the sequence generated by Algorithm \ref{algo: complete algorithm}. The main result of this section is stated below. \begin{theorem}
     Suppose the object function $f$ satisfies Assumption \ref{assumption: about f}, the subspaces $\mathcal{S}_k$ satisfies $\mathcal{S}_k\neq\mathcal{S}_{\bm s_1,\bm s_2}^{(1,\sigma)}\ (\sigma\geq 0)$, then
       $\inf_{k\leq N}\Vert \nabla f(\bm x_k)\Vert \to 0,\ N\to\infty.$
    \end{theorem}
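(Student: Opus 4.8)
The plan is to follow the classical trust-region global convergence scheme (as in Conn--Gould--Toint or Nocedal--Wright), adapted to the subspace setting of MD-LAMBO. The key structural fact we can lean on is Corollary~\ref{cor: cauchy decrease, cor of thm 9}: whenever $m_k$ satisfies Assumption~\ref{assumption: cauchy}, the step produced by STCG achieves a Cauchy-type decrease in the projected model. Since by Step~3 of MD-LAMBO the gradient $\nabla f(\bm x_k)$ always lies in $\mathcal{S}_k$ (this holds for all fourteen of the subspaces other than $\mathcal{S}_{\bm s_1,\bm s_2}^{(1,\sigma)}$, which is exactly why that one is excluded in the hypothesis), the first column $\bm a_k^{(1)}$ of $\bm P_k$ is $\nabla f(\bm x_k)/\|\nabla f(\bm x_k)\|$, so $\|\nabla m_k(\bm 0)\| = \|\bm P_k^\top \nabla f(\bm x_k)\| \geq \|\nabla f(\bm x_k)\|$. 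Thus the subspace Cauchy decrease transfers to a genuine decrease controlled by the \emph{full-space} gradient norm: on any successful iteration $k'$ with $m_{k'}$ satisfying Assumption~\ref{assumption: cauchy},
\[
f(\bm x_{k'}) - f(\bm x_{k'+1}) \geq \eta_0\,\big(m_{k'}(\bm 0) - m_{k'}(\bm y^*)\big) \geq \eta_0 c_1 \|\nabla f(\bm x_{k'})\|\min\!\Big(\Delta_{k'},\tfrac{\|\nabla f(\bm x_{k'})\|}{\max(1,\kappa_H)}\Big).
\]

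First I would argue by contradiction: suppose there is $\varepsilon>0$ and an index $N_0$ with $\|\nabla f(\bm x_k)\| \geq \varepsilon$ for all $k \geq N_0$. The next step is the standard ``trust-region radius cannot stay small forever'' argument: using the model error bound from $\bm P_k$-fully-quadraticity (Proposition~\ref{prop: m_k fully quadratic}, giving $|f(\bm x_k + \bm P_k \bm y) - m_k(\bm y)| \leq \kappa_{ef}\Delta_k^3$ on $\|\bm y\|\leq\Delta_k$) together with the Cauchy decrease, one shows that once $\Delta_k$ drops below a threshold $c\,\varepsilon$ (with $c$ depending only on $\kappa_{ef}, c_1, \kappa_H, \eta_2$), the iteration is very successful and the radius is not decreased; hence $\Delta_k$ is bounded below by some $\Delta_{\min}(\varepsilon) > 0$ for all $k \geq N_0$. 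Here one must be slightly careful because the Cauchy decrease is only guaranteed on the subsequence of iterations where Assumption~\ref{assumption: cauchy} holds; but Assumption~\ref{assumption: many m_k satisfies cauchy assump} guarantees that from any successful iteration there is a later successful one satisfying it, so this subsequence $\mathcal{K}' \subseteq \mathcal{K}$ is infinite, and on iterations outside $\mathcal{K}'$ we simply use $f(\bm x_{k+1}) \leq f(\bm x_k)$ (monotonicity from $\rho_k \geq \eta_0$ on successful steps and $\bm x_{k+1}=\bm x_k$ on unsuccessful ones).

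Then I would sum the decrease over $k' \in \mathcal{K}'$ with $k' \geq N_0$: each such term is at least $\eta_0 c_1 \varepsilon \min(\Delta_{\min}(\varepsilon), \varepsilon/\max(1,\kappa_H)) =: \delta(\varepsilon) > 0$, a fixed positive constant. Since $\mathcal{K}'$ is infinite and $f$ is bounded below (Assumption~\ref{assumption: about f}), telescoping $\sum_{k'} \big(f(\bm x_{k'}) - f(\bm x_{k'+1})\big) \leq f(\bm x_{N_0}) - \inf f < \infty$ forces only finitely many such terms, a contradiction. This yields $\liminf_{k\to\infty}\|\nabla f(\bm x_k)\| = 0$, which is precisely $\inf_{k\leq N}\|\nabla f(\bm x_k)\| \to 0$ as $N\to\infty$.

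The main obstacle, and the place where the argument needs the most care, is establishing that $\mathcal{K}$ (and hence $\mathcal{K}'$) is infinite in the first place, i.e.\ that the algorithm does not stall at an unsuccessful iterate forever with $\Delta_k \to 0$. This is the usual ``eventually successful'' lemma: if $\bm x_k$ is fixed over a run of unsuccessful steps then $\Delta_k \to 0$, but the fully-quadratic error bound $\kappa_{ef}\Delta_k^3$ is third order in $\Delta_k$ while the Cauchy decrease is first order (it behaves like $c_1\varepsilon\Delta_k$ once $\Delta_k \leq \varepsilon/\max(1,\kappa_H)$), so $|\rho_k - 1| \leq \kappa_{ef}\Delta_k^3 / (c_1\varepsilon\Delta_k) \to 0$, contradicting perpetual unsuccess. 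The subtlety is again that this needs a model satisfying Assumption~\ref{assumption: cauchy}; one handles it by noting that if \emph{all} iterations from some point on were unsuccessful then in particular the next Assumption-\ref{assumption: cauchy}-compliant index promised by Assumption~\ref{assumption: many m_k satisfies cauchy assump} would have to be successful, a contradiction — so I would state and prove this as a preliminary lemma before running the main contradiction argument above.
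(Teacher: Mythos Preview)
Your outline is sound and follows the classical Conn--Gould--Toint trust-region template, but it takes a genuinely different route from the paper. The paper organizes the argument into two lemmas: Lemma~\ref{lem: convergence_bad->good} shows that an unsuccessful run cannot persist forever (via direct Taylor expansion of $f$ around the stuck iterate, giving $\rho_t \to 1$ without invoking any Cauchy-decrease bound), and the second lemma operates under the \emph{additional} hypothesis $\Delta_k \to 0$, using the $\bm P_k$-fully-quadratic gradient estimate \eqref{ineq:fully quadratic in convergence_good_approx} together with $\|\bm P_{k_N}^\top \nabla f(\bm x_{k_N})\| = \|\nabla f(\bm x_{k_N})\|$ to transfer smallness of $\|\nabla m_{k_N}(\bm 0)\|$ back to the full gradient. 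By contrast, you never assume $\Delta_k \to 0$: under the contradiction hypothesis $\|\nabla f(\bm x_k)\| \geq \varepsilon$ you derive a lower bound $\Delta_k \geq \Delta_{\min}(\varepsilon)$ and then telescope fixed-size decreases. Your route is the more standard and self-contained one; the paper's is shorter but leans on the extra hypothesis $\Delta_k\to 0$ in its second lemma, which is not separately justified there.

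One place your argument needs tightening: the lower bound on $\Delta_k$ requires that \emph{whenever} $\Delta_k$ is small the iteration is very successful, and for that you need a Cauchy-type lower bound on $m_k(\bm 0)-m_k(\bm y^*)$ at \emph{that particular} $k$, not merely along the subsequence $\mathcal{K}'$ promised by Assumption~\ref{assumption: many m_k satisfies cauchy assump}. You flag this but do not resolve it. The clean fix is to observe that for every admissible subspace (all except $\mathcal{S}_{\bm s_1,\bm s_2}^{(1,\sigma)}$) one has $\bm 0\in\mathcal{W}$, hence $\mathrm{span}\{\nabla p_k(\bm 0)\}=\mathrm{span}\{\bm g\}\subseteq\mathcal{S}_k$ and therefore $\mathrm{span}\{\bm g_k\}\subseteq\hat{\mathcal{S}}_k$. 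Consequently the first STCG iterate is exactly the usual Cauchy point, and the classical bound $m_k(\bm 0)-m_k(\bm y^*)\geq\tfrac12\|\bm g_k\|\min(\Delta_k,\|\bm g_k\|/\|\bm B_k\|)$ holds at \emph{every} iteration, without appealing to Assumption~\ref{assumption: cauchy} or Corollary~\ref{cor: cauchy decrease, cor of thm 9}. With that observation in hand your radius-bounded-below step goes through directly and the rest of your plan is unproblematic.
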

    The proof of this theorem is split into two lemmas, which discuss two cases separately, where in the first case, model $m_k$ is a good approximation of $f$, i.e. \begin{equation}\label{ineq:convergence_rho_eta}
        \rho_k=\frac{f(\bm{x}_k)-f(\bm{x}_k+\bm{P}_k \bm y^{*})}{m_k(\bm 0)-m_k(\bm y^*)}\geq \eta_0,
    \end{equation}
    and in the second case, model $m_k$ is a poor approximation of $f$, i.e., 
    \begin{equation*}
        \rho_k=\frac{f(\bm{x}_k)-f(\bm{x}_k+\bm{P}_k \bm y^{*})}{m_k(\bm 0)-m_k(\bm y^*)}< \eta_0.
    \end{equation*}
    When the model is a poor approximation, we claim that our algorithm attempts to improve the approximation quality within a finite number of iterations. 
    \begin{lemma}\label{lem: convergence_bad->good}
        Suppose $f$ satisfies Assumption \ref{assumption: about f}. If $\rho_k<\eta_0$ for some $k$, then there exists some positive integer $N>k$ such that $\rho_{N}\geq\eta_0$.
    \end{lemma}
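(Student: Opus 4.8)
The plan is a standard trust-region contradiction argument, using the fully-quadratic property of $m_k$ (Proposition~\ref{prop: m_k fully quadratic}) together with a Cauchy-type lower bound on the model decrease produced by Algorithm~\ref{algo:tcg with radius, steps form}. I would assume, for contradiction, that $\rho_j<\eta_0$ for every integer $j>k$; combined with the hypothesis $\rho_k<\eta_0$ this means iterations $k,k+1,\dots$ are all unsuccessful, so the iterate never moves and $\bm x_j=\bar{\bm x}:=\bm x_k$ for all $j\ge k$. Because the algorithm did not terminate, $\bm g:=\nabla f(\bar{\bm x})\neq\bm 0$. Each unsuccessful step multiplies the trust-region radius by $\gamma_{dec}<1$ (equivalently inflates the ARC parameter $\sigma_j$ by $\gamma_{inc}>1$), hence $\Delta_j\to 0$ as $j\to\infty$.

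The next step is a uniform lower bound on the predicted decrease $m_j(\bm 0)-m_j(\bm y^*)$ of the $j$-th subproblem~\eqref{prob:TRS on S}. For the subspaces under consideration one has $\nabla f(\bm x_k)\in\mathcal{S}_k$, and since the Gram--Schmidt basis has leading vector $\nabla f(\bar{\bm x})/\|\nabla f(\bar{\bm x})\|$, the projected gradient is $\bm g_j=\bm P_j^\top\bm g$ with $\|\bm g_j\|=\|\bm g\|>0$, while $\|\bm B_j\|=\|\bm P_j^\top\nabla^2 f(\bar{\bm x})\bm P_j\|\le\kappa_H$ by Assumption~\ref{assumption: about f}. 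Moreover the Cauchy ray $\{-t\bm g_j:t\ge 0\}$ lies inside the projected subspace $\hat{\mathcal S}_j$ (for $\hat{\mathcal S}^{(1,\sigma)}_l$ it is the axis of symmetry of the sector of Theorem~\ref{thm: sp. gene. by cup+span}), so the Cauchy point is feasible for~\eqref{prob:TRS on S} and the output $\bm y^*$ of Algorithm~\ref{algo:tcg with radius, steps form} achieves at least a fixed fraction of the Cauchy decrease: invoking Corollary~\ref{cor: cauchy decrease, cor of thm 9} (under Assumption~\ref{assumption: cauchy}), or for $\mathcal S^{(1,\sigma)}_l$ Theorem~\ref{thm:algo3 achieved certain decrease} combined with feasibility of the Cauchy point, gives
\begin{equation*}
  m_j(\bm 0)-m_j(\bm y^*)\ \ge\ c_1\,\|\bm g\|\,\min\!\Big(\Delta_j,\ \tfrac{\|\bm g\|}{\max(1,\kappa_H)}\Big),
\end{equation*}
which for all sufficiently large $j$ equals $c_1\|\bm g\|\Delta_j$.

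Finally I would estimate $\rho_j$. Since $m_j(\bm 0)=f(\bar{\bm x})$ and $\|\bm y^*\|\le\Delta_j$, the $\bm P_j$-fully-quadratic bound of Definition~\ref{P-fully_quadratic}/Proposition~\ref{prop: m_k fully quadratic} (whose constant $\kappa_{ef}$ does not depend on $\Delta_j$) yields
\begin{equation*}
  |\rho_j-1|=\frac{\bigl|f(\bar{\bm x}+\bm P_j\bm y^*)-m_j(\bm y^*)\bigr|}{m_j(\bm 0)-m_j(\bm y^*)}\ \le\ \frac{\kappa_{ef}\,\Delta_j^{3}}{c_1\,\|\bm g\|\,\Delta_j}\ =\ \frac{\kappa_{ef}}{c_1\|\bm g\|}\,\Delta_j^{2}\ \longrightarrow\ 0 .
\end{equation*}
Hence $\rho_j\to 1>\eta_0$, so $\rho_j\ge\eta_0$ for every sufficiently large $j>k$, contradicting the assumption that all such iterations are unsuccessful; therefore some $N>k$ satisfies $\rho_N\ge\eta_0$.

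The main obstacle I anticipate is the bookkeeping caused by a stalled iterate: for $j\ge k+2$ the history directions $\bm s_1,\bm s_2$ formed from $\bm x_{j-2},\bm x_{j-1},\bm x_j$ degenerate, so one must make sure the subspace construction still includes $\nabla f(\bar{\bm x})$ (so that $\|\bm g_j\|$ stays bounded away from $0$) or treat the stalled iterate as an explicit special case. The second delicate point is justifying the Cauchy-type lower bound on $m_j(\bm 0)-m_j(\bm y^*)$ for each admissible truncated subspace, i.e.\ confirming that the Cauchy step from Algorithm~\ref{algo:tcg with radius, steps form} never leaves $\hat{\mathcal S}_j$, rather than merely for $\mathcal S^{(1,\sigma)}_l$. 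The remaining estimates are the routine trust-region comparison above.
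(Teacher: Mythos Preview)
Your argument is essentially the paper's: assume all iterations from $k$ on are unsuccessful, observe $\bm x_j\equiv\bar{\bm x}$ and $\Delta_j\to 0$, then show $\rho_j\to 1$ to obtain a contradiction. The paper reaches $\rho_j\to 1$ more directly: since $m_j$ is literally the second-order Taylor polynomial of $f$ at $\bar{\bm x}$ restricted to the subspace, it writes $f(\bar{\bm x})-f(\bar{\bm x}+\bm P_j\bm y^*) = m_j(\bm 0)-m_j(\bm y^*)+o(\|\bm y^*\|^2)$ and then asserts the denominator is of order $\|\bm y^*\|$, so the ratio tends to $1$. You instead route the numerator through the $\bm P$-fully-quadratic bound and the denominator through an explicit Cauchy-type lower bound; this is more careful, but it costs you an extra hypothesis---Corollary~\ref{cor: cauchy decrease, cor of thm 9} needs Assumption~\ref{assumption: cauchy}, which Lemma~\ref{lem: convergence_bad->good} does not assume. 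The fix is the one you already hint at: the first step of Algorithm~\ref{algo:tcg with radius, steps form} is along $-\bm g_j$, which lies in $\hat{\mathcal S}_j$ for all the admissible subspaces, so the classical Cauchy decrease $m_j(\bm 0)-m_j(\bm y^*)\ge \tfrac12\|\bm g\|\min(\Delta_j,\|\bm g\|/\|\bm B_j\|)$ holds without Assumption~\ref{assumption: cauchy}. With that substitution your proof goes through and is a cleaner version of the paper's somewhat informal ``$m_t(\bm 0)-m_t(\bm y^*)=O(\|\bm y^*\|)$'' step. Your caveat about degenerate $\bm s_1,\bm s_2$ when the iterate stalls is a real bookkeeping point the paper simply ignores.
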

    \begin{proof}{Proof}
        Suppose to the contrary that, starting from the $k$th iteration, all model approximations do not achieve the threshold $\eta_0$, i.e.,  \begin{equation}\label{ineq:convergence_bad_rho}  \rho_t=\frac{f(\bm{x}_t)-f(\bm{x}_t+\bm{P}_t \bm y^{*})}{m_t(\bm 0)-m_t(\bm y^*)}< \eta_0,\text{ for all }t\geq k.
    \end{equation}
    Then by $f$ is twice differentiable at $\bm x_t$, we can write the Taylor expansion of $f$ at $\bm x_t$ with error term $o(\Vert \bm P_t\bm y^*\Vert^2)$, which is
    \begin{equation*}
    \begin{aligned}
        f(\bm{x}_t)-f(\bm{x}_t+\bm{P}_t \bm y^{*})
        &=-\nabla f(\bm x_t)^\top \bm P_t\bm y^*-\frac{1}{2} (\bm P_t\bm y^*)^\top\nabla^2f(\bm x_t)\bm P_t\bm y^*+o(\Vert \bm P_t\bm y^*\Vert^2)\\
        &=m_t(\bm 0)-m_t(\bm y^*)+o(\Vert \bm P_t\bm y^*\Vert^2)\\
        &=m_t(\bm 0)-m_t(\bm y^*)+o(\Vert \bm y^*\Vert^2).
    \end{aligned}   
    \end{equation*}
    Therefore, we can write $\rho_t$ as 
    \begin{equation*}
        \rho_t=1-\frac{o(\Vert\bm y^*\Vert^2)}{m_t(\bm 0)-m_t(\bm y^*)},\text{ for all }t\geq k.
    \end{equation*}
    Moreover, \eqref{ineq:convergence_bad_rho} implies $\bm x_t=\bm x_{k}$ and $\Delta_t=\gamma_{dec}^{t-k}\Delta_{k}$ for all $t\geq k$, together with the fact that
    $m_t(\bm 0)-m_t(\bm y^*)=O(\Vert \bm y^*\Vert)$, we have $\rho_t\to 1 \text{ as }t\to\infty$, which contradicts with \eqref{ineq:convergence_bad_rho}.  
    \end{proof}

    With Lemma \ref{lem: convergence_bad->good}, it suffices to show that for any small $\eps>0$,
    after sufficient iterations with $m_k$ a good model approximation, $\Vert\nabla f(\bm x_k)\Vert$ will be driven under $\eps$.   
    \begin{lemma}
        Suppose Assumption \ref{assumption: many m_k satisfies cauchy assump} is satisfied and $\Delta_k\to0$ as $k\to \infty$, then for any $k\in\mathbb{N},\ \eps>0$, there exists $N>k$ such that $\Vert\nabla f(\bm x_N)\Vert\leq \eps.$  
    \end{lemma}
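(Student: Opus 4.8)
The plan is to argue by contradiction, following the classical first-order convergence scheme for trust-region methods (in which one proves $\liminf_k\Vert\nabla f(\bm x_k)\Vert=0$), while accounting for the fact that the Cauchy-type decrease is only available intermittently (Assumption~\ref{assumption: many m_k satisfies cauchy assump}). Suppose the claim fails: there are $\eps>0$ and an index $k_1$ with $\Vert\nabla f(\bm x_k)\Vert>\eps$ for all $k>k_1$, and we seek a contradiction with the hypothesis $\Delta_k\to0$ together with $f$ being bounded below.

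First I would note that successful iterations are abundant. By Lemma~\ref{lem: convergence_bad->good} the set $\mathcal{K}$ of successful indices is infinite: were its largest element $K_0$, the iteration $K_0+1$ would be unsuccessful and Lemma~\ref{lem: convergence_bad->good} would produce a later successful one. Assumption~\ref{assumption: many m_k satisfies cauchy assump} then makes the subset $\mathcal{K}'\subseteq\mathcal{K}$ of successful indices at which $m_k$ additionally satisfies Assumption~\ref{assumption: cauchy} infinite as well.

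Next I would exploit the Cauchy decrease along $\mathcal{K}'$. Since the theorem excludes $\mathcal{S}_{\bm s_1,\bm s_2}^{(1,\sigma)}$, every admissible subspace contains $\nabla f(\bm x_k)$, so the first column of $\bm P_k$ can be taken parallel to $\nabla f(\bm x_k)$; hence $\Vert\nabla m_k(\bm 0)\Vert=\Vert\bm P_k^\top\nabla f(\bm x_k)\Vert=\Vert\nabla f(\bm x_k)\Vert>\eps$, while $\Vert\nabla^2 m_k(\bm 0)\Vert=\Vert\bm P_k^\top\nabla^2 f(\bm x_k)\bm P_k\Vert\le\kappa_H$ by Assumption~\ref{assumption: about f}. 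For $k\in\mathcal{K}'$ with $k>k_1$, Corollary~\ref{cor: cauchy decrease, cor of thm 9} gives $m_k(\bm 0)-m_k(\bm y^*)\ge c_1\eps\min\!\big(\Delta_k,\eps/\max(1,\kappa_H)\big)$; because $\Delta_k\to0$, the minimum is eventually $\Delta_k$, so $f(\bm x_k)-f(\bm x_{k+1})=\rho_k\big(m_k(\bm 0)-m_k(\bm y^*)\big)\ge\eta_0 c_1\eps\,\Delta_k$. Moreover, the Lipschitz Hessian bound yields $|f(\bm x_k+\bm P_k\bm y^*)-m_k(\bm y^*)|\le\frac{L}{6}\Vert\bm P_k\bm y^*\Vert^3\le\frac{L}{6}\Delta_k^3$ and $m_k(\bm 0)=f(\bm x_k)$, whence $|\rho_k-1|\le L\Delta_k^2/(6c_1\eps)\to0$ along $\mathcal{K}'$; thus each such $k$ is eventually very successful and the ARC update does not decrease $\Delta_k$ there.

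I would then close by combining $\sum_k\big(f(\bm x_k)-f(\bm x_{k+1})\big)<\infty$ (the sequence $f(\bm x_k)$ is monotone and bounded below) with the previous step to get $\sum_{k\in\mathcal{K}'}\Delta_k<\infty$, and confronting this with the radius dynamics: a large cauchy step is very successful with $\Delta_{k+1}=\gamma_{\mathrm{inc}}\Delta_k$, while Lemma~\ref{lem: convergence_bad->good} forbids arbitrarily long unbroken runs of unsuccessful steps, so $\Delta_k$ cannot contract fast enough for that series to converge — a contradiction; equivalently, applying the same estimates at an \emph{unsuccessful} cauchy step would force $\Delta_k^2\ge 6c_1\eps(1-\eta_0)/L$, again contradicting $\Delta_k\to0$. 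I expect this final paragraph to be the main obstacle: because the Cauchy decrease is guaranteed only along the possibly sparse set $\mathcal{K}'$, one must bookkeep carefully how much $\Delta_k$ may shrink over the intervening non-cauchy iterations — exactly where the hypothesis $\Delta_k\to0$, the curvature bound $\kappa_H$, and the ARC update interact most delicately. The remaining ingredients — abundance of successful iterations, the identity $\Vert\nabla m_k(\bm 0)\Vert=\Vert\nabla f(\bm x_k)\Vert$, and the cubic model-error estimate — are routine.
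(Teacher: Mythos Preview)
Your preliminary observations — that $\mathcal{K}$ and hence $\mathcal{K}'$ are infinite, that $\Vert\nabla m_k(\bm 0)\Vert=\Vert\nabla f(\bm x_k)\Vert$ because $\nabla f(\bm x_k)\in\mathcal{S}_k$, and the cubic model-error estimate — are correct and are also used in the paper. The difficulty lies exactly where you place it, and your closing argument does not go through with the stated hypotheses.

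The paper avoids this obstacle entirely by using a stronger decrease bound than the one you invoke. You appeal to Corollary~\ref{cor: cauchy decrease, cor of thm 9}, whose right-hand side carries the factor $\min(\Delta_k,\cdot)$ and therefore degenerates as $\Delta_k\to0$; this is what forces you into the radius-dynamics bookkeeping. The paper instead uses inequality~\eqref{ineq:pre_cauchy decrease} from the theorem immediately preceding that corollary: under Assumption~\ref{assumption: cauchy} one has $m_k(\bm 0)-m_k(\bm y^*)\ge c_1\Vert\nabla m_k(\bm 0)\Vert^2$, with \emph{no} dependence on $\Delta_k$. Consequently every $k\in\mathcal{K}'$ with $\Vert\nabla f(\bm x_k)\Vert>\eps$ yields the constant drop $f(\bm x_k)-f(\bm x_{k+1})\ge\eta_0 c_1\eps^2$; since $f$ is bounded below and $\mathcal{K}'$ is infinite, only finitely many such indices can occur, and one reaches $k_N$ with $\Vert\nabla m_{k_N}(\bm 0)\Vert\le\eps$ directly. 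The paper then translates this back to $\Vert\nabla f(\bm x_{k_N})\Vert$ via the fully-quadratic estimate and $\Delta_k\to0$ (a step that is in fact redundant once one uses $\Vert\nabla m_k(\bm 0)\Vert=\Vert\nabla f(\bm x_k)\Vert$).

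By contrast, neither of your proposed closings is available here. Alternative~(b) requires an \emph{unsuccessful} iteration at which Assumption~\ref{assumption: cauchy} holds, but Assumption~\ref{assumption: many m_k satisfies cauchy assump} only asserts this along $\mathcal{K}'\subseteq\mathcal{K}$, i.e.\ at successful steps. Alternative~(a) needs $\sum_{k\in\mathcal{K}'}\Delta_k<\infty$ to contradict the radius update, but between consecutive elements of $\mathcal{K}'$ there can be arbitrarily many (successful or unsuccessful) non-Cauchy iterations over which you have no lower control on $\Delta_k$, so summability along the sparse set $\mathcal{K}'$ is not a contradiction. The $\Delta$-free bound~\eqref{ineq:pre_cauchy decrease} is precisely what removes the need for any such bookkeeping.
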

    
    \begin{proof}{Proof}
       Assumption \ref{assumption: cauchy} guarantees the sufficient decrease of $m_k$. Together with \eqref{ineq:convergence_rho_eta}, we have \begin{equation}\label{ineq:decrease of f in good_approx case}
        f(\bm{x}_k)-f(\bm{x}_k+\bm{P}_k \bm y^{*})\geq \frac{\eta_0}{2}\left(m_k(\bm 0)-m_k(\bm s^*)\right)\geq 0,
    \end{equation}
    where $\bm s^*$ is the minimizer of $m_k$ in the full subspace $\mathbb{R}^q$, and $\bm y^*$ is the solution of the $k$th subproblem, $q=\dim(\mathcal{S}_k)$. Moreover, by the fully quadratic condition, we have
    \begin{equation}\label{ineq:fully quadratic in convergence_good_approx}
        \Vert \bm P_k^\top\nabla f(\bm x_k+\bm P_k\bm y^*)-\nabla m_k(\bm y^*)\Vert\leq \kappa_{eg}\Delta_k^2,
    \end{equation}
    where and $\Delta_k$ is bounded from the above by some positive $\Delta_{\max}$ and by Proposition \ref{prop: m_k fully quadratic}, $\kappa_{eg}=2L$ is independent of $k$.

    For $\eps>0$, if $\Vert\nabla m_{k}(\bm 0)\Vert>\eps$, then by Assumption \ref{assumption: many m_k satisfies cauchy assump}, we can find $k\leq k_0\in\mathcal{K}$, such that solving the trust-region subproblem on $\hat{\mathcal{S}}_{k_0}$ will give a Cauchy decrease of $m_{k_0}$, which is followed by a sufficient decrease in $f$ as described by \eqref{ineq:decrease of f in good_approx case}. Again, if $\Vert\nabla m_{k_0}(\bm 0)\Vert>\eps$, by the same argument, we can find $k_0\leq k_1\in\mathcal{K}$ such that the solution of the $k_1$th trust-region subproblem also gives sufficient function value decrease. Since $f$ is bounded from below, such a decrease can only happen for finite iterations, which implies that for some iteration $k_N\geq k$, there must be $\Vert\nabla m_{k_N}(\bm 0)\Vert\leq \eps$. Hence, the solution of the subproblem at $k_N$th iteration will eventually satisfy $\Vert m_{k_N}(\bm y^*)\Vert\leq\Vert\nabla m_{k_N}(\bm 0)\Vert\leq \eps$. 

    By \eqref{ineq:fully quadratic in convergence_good_approx}, we have \begin{equation}\label{ineq: PkN}
        \Vert \bm P_{k_N}^\top\nabla f(\bm x_{k_N}+\bm P_{k_N}\bm y^*)\Vert\leq \kappa_{eg}\Delta_{k_N}^2+\eps.
    \end{equation}
Moreover, 
\begin{equation*}
\begin{aligned}
     \Vert \bm P_{k_N}^\top\nabla f(\bm x_{k_N}+\bm P_{k_N}\bm y^*)\Vert&\geq \Vert \bm P_{k_N}^\top\nabla f(\bm x_{k_N})\Vert-\Vert \bm P_{k_N}^\top \left(\nabla f(\bm x_{k_N})-\nabla f(\bm x_{k_N}+\bm P_{k_N}\bm y^*)\right) \Vert\\
     &\geq\Vert \bm P_{k_N}^\top\nabla f(\bm x_{k_N})\Vert-\Vert\nabla f(\bm x_{k_N})-\nabla f(\bm x_{k_N}+\bm P_{k_N}\bm y^*)\Vert\\
     &\geq \Vert \bm P_{k_N}^\top\nabla f(\bm x_{k_N})\Vert-\kappa_H\Delta_{k_N}. 
\end{aligned}
\end{equation*}
Since for all truncated subspaces except for $\mathcal{S}_{\bm s_1,\bm s_2}^{(1,\sigma)}\ (\sigma\geq 0)$ we have $\nabla f(\bm x_k)\in \mathcal{S}_k\text{ for all } k$, the equality $\Vert \bm P_{k_N}^\top\nabla f(\bm x_{k_N})\Vert=\Vert\nabla f(\bm x_{k_N})\Vert$ must hold. Combined with \eqref{ineq: PkN}, we have
\begin{equation*}
    \Vert \nabla f(\bm x_{k_N})\Vert=\Vert \bm P_{k_N}^\top\nabla f(\bm x_{k_N})\Vert\leq \kappa_{H}\Delta_k+\kappa_{eg}\Delta_k^2+\eps.
\end{equation*}
Since $\Delta_k\to 0$ as $k\to \infty$, we conclude that for sufficiently large $N$, there must be $\Vert \nabla f(\bm x_{k_N})\Vert\leq 2\eps$.
\end{proof}

\begin{remark}
    When the solution $\bm y^*$ satisfies $\Vert \nabla m_{k}(\bm y^*)\Vert>\Vert\nabla m_{k}(\bm 0)\Vert$, it is reasonable to set $\bm y^*=\bm 0$.
\end{remark}

A key feature of the subspace method is that the algorithm can identify a ``good'' subspace within finitely many steps. Specifically, a subspace is considered ``{good}'' if it is not orthogonal to the gradient $\nabla f$. Such constructions can be justified theoretically using techniques like the {\color{black}Johnson–Lindenstrauss (J-L) transformation \cite{Woodruff_2014} and results. 
In most cases, the subspaces constructed maintain the property of sufficient model decrease. However, specially chosen subspaces can further reduce computational cost at the possible expense of slower functional decrease. 
Therefore, there exists a trade-off: while special subspaces may not guarantee the same rate of function value reduction, they often lead to improved efficiency in large-scale settings.}

\section{Numerical results}\label{section: numerical results}
We use the performance profile (introduced by Dolan and Mor\'{e} \cite{EDD01}) %and the data profile (introduced by Mor\'{e} and Wild \cite{JJMSMW09})
to show the numerical behavior of Algorithm \ref{algo: complete algorithm} with different subspaces fitted in, where the problems come from 38 classical test problems\footnote{see https://github.com/POptUS/yatsop.} and 10 forms of variations of each problem (380 problems in total). The specific settings of these test problems can be found in \citet{YATSOp2025}.
{\color{black}Details of the 10 forms of variations can be found in Section 2.3 of \cite{xie2025remuregionalminimalupdating}.
} 

To describe the performance profile, we first define the value
\[f_{\mathrm{acc}}^{N}=\frac{f(\bm{x}_{N})-f(\bm{x}_{0})}{f(\bm{x}_{\text{best}})-f(\bm{x}_{0})} \in [0,1],
\]
where $\bm x_0$ denotes the initial point, $\bm x_N$ is the best point found by Algorithm \ref{algo: complete algorithm} after $N$ iterations 
and $\bm x_{\rm best}$ denotes the best known solution. Then $f_{\rm acc}^N$ reflects the relative decrease of the function value after some number of iterations in a certain algorithm. Let $\tau \in[0,1]$ denote the tolerance. Then the solution obtained by Algorithm \ref{algo: complete algorithm} is said to reach the accuracy $\tau$ when $f_{\rm acc}^{N}\geq 1-\tau$. 
Denote the set of total 380 problems by $\mathcal{P}$ and the set of 16 versions of algorithms by $\mathcal{A}$, where each of the algorithm in $\mathcal{A}$ is obtained by Algorithm \ref{algo: complete algorithm} equipped with the subspaces from Table \ref{tab:subspace-1} and \ref{tab:subspace-2}. Table \ref{tab:algo 1-8} shows the 16 algorithms.

\begin{table}[htbp!]
    \caption{Algorithms equipped with different subspaces}
    \centering
    \begin{tabular}{m{1.2cm}<{\centering}m{1.2cm}
    <{\centering}m{1.2cm}
    <{\centering}m{1.2cm}<{\centering}m{1.2cm}
    <{\centering}m{1.2cm}
    <{\centering}m{1.2cm}
    <{\centering}m{1.2cm}<{\centering}m{1.2cm}<{\centering}}
    \toprule
     Algorithm & 1 & 2 & 3 & 4 & 5 & 6 & 7 & 8 \\
    \midrule
     Subspace  & $\mathcal{S}_{\mathcal{B}}^{(1,0)}$ & $\mathcal{S}_{l}^{(1,0)}$ & $\mathcal{S}_{\bm s_1,\bm s_2}^{(1,0)}$ & $\mathcal{S}_{\langle \bm s_1,\bm s_2\rangle}^{(1,0)}$ &$\mathcal{S}_{\mathcal{B}}^{(1,\sigma)}$ & $\mathcal{S}_{l}^{(1,\sigma)}$ & $\mathcal{S}_{\bm s_1,\bm s_2}^{(1,\sigma)}$ & $\mathcal{S}_{\langle \bm s_1,\bm s_2\rangle}^{(1,\sigma)}$\\
     \midrule
     \midrule
      Algorithm & 9 & 10 & 11 & 12 & 13 & 14 & 15 & 16 \\
    \midrule
      Subspace  & $\mathcal{S}_{\mathcal{B}}^{(2,0)}$ & $\mathcal{S}_{l}^{(2,0)}$ & $\mathcal{S}_{\langle\bm s_1\rangle,\langle\bm s_2\rangle}^{(2,0)}$ & $\mathcal{S}_{\langle \bm s_1,\bm s_2\rangle}^{(2,0)}$ &$\mathcal{S}_{\mathcal{B}}^{(2,\sigma)}$ & $\mathcal{S}_{l}^{(2,\sigma)}$ & $\mathcal{S}_{\langle\bm s_1\rangle,\langle\bm s_2\rangle}^{(2,\sigma)}$ & $\mathcal{S}_{\langle \bm s_1,\bm s_2\rangle}^{(2,\sigma)}$\\
    \bottomrule
    \end{tabular}
    
    \label{tab:algo 1-8}
\end{table}
The relationship of the 16 subspaces is illustrated in Figure \ref{fig: sp_relation}, where $2\xrightarrow[0\ \to\ \sigma]{0\ \supset\  \sigma}6$ means that the \textcolor{black}{6th subspace $\mathcal{S}_l^{(1,\sigma)}$}, while included in the 2nd subspace $\mathcal{S}_l^{(1,0)}$, is driven by the same inspiring region as the 2nd subspace $\mathcal{S}_l^{(1,0)}$ while their models differ only with a penalty term $\sigma \Vert \bm s\Vert ^3/3$. 
Similarly, $\mathcal{S}_{\langle\bm s_1,\bm s_2\rangle}^{(1,0)}\xrightarrow{\subset}\mathcal{S}_{\langle\bm s_1\rangle,\langle\bm s_2\rangle}^{(2,0)}$ means the 4th subspace $\mathcal{S}_{\langle\bm s_1,\bm s_2\rangle}^{(1,0)}$ is included in the 11th subspace $\mathcal{S}_{\langle\bm s_1\rangle,\langle\bm s_2\rangle}^{(2,0)}$ while $\mathcal{S}_{\langle\bm s_1\rangle,\langle\bm s_2\rangle}^{(2,0)}\xrightarrow{=}\mathcal{S}_{\langle\bm s_1,\bm s_2\rangle}^{(2,0)}$ means the 11th subspace $\mathcal{S}_{\langle\bm s_1\rangle,\langle\bm s_2\rangle}^{(2,0)}$ is exactly the same as the 12th subspace $\mathcal{S}_{\langle\bm s_1,\bm s_2\rangle}^{(2,0)}$.

\begin{remark}
The arrow symbol $\rightarrow$ is used to connect two related subspaces, numbers and letters $0, \sigma$ stand for subspaces, and the inclusion and equality symbols $\subset,=$ illustrate the relationship between truncated subspaces. The three types of symbols mentioned are independent of each other. Being combined together, they express the relationship between two different truncated subspaces.
\end{remark}
\begin{figure}[htb]
    \centering   \includegraphics[width=.97\linewidth, trim=50 0 50 0, clip]{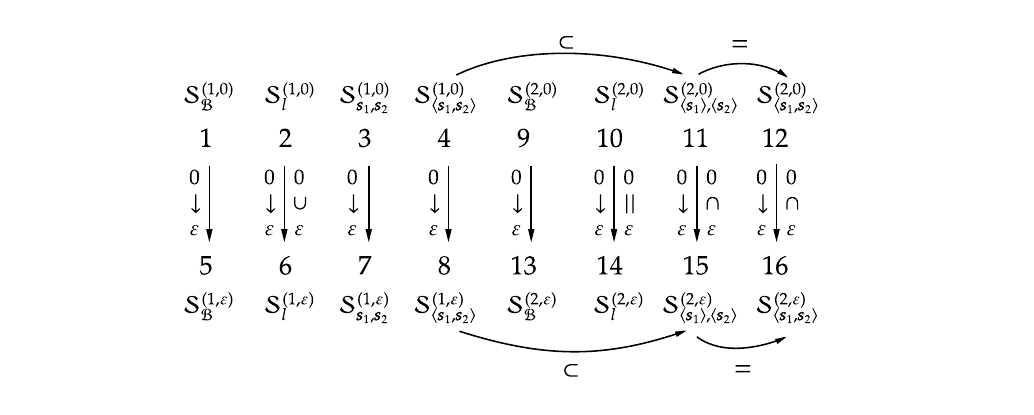}
    \caption{Relationships between our subspaces}
    \label{fig: sp_relation}
\end{figure}

For each $p\in\mathcal{P}$ and $a\in\mathcal{A}$, let $N_{a,p}=\min\{n \in \mathbb{N},\ f_{\mathrm{acc}}^{n}\ge 1-\tau \}$, then $N_{a,p}$ is the minimal number of iterations such that the solution achieves the accuracy $\tau$.

For a fixed value of $\tau$, we define the indicator $T_{a,p}$
\begin{equation*}
\begin{aligned}
&T_{a, p}=\left\{
\begin{aligned}
& 1, \ \text{if} \ f_{\mathrm{acc}}^{N} \geq 1-\tau \ \text{for some } N,\\
& 0, \ \text{otherwise,}
\end{aligned}\right. 
\end{aligned}
\end{equation*}
and the relative number of iterations $r_{a,p}$
\begin{equation*}
r_{a, p}=\left\{\begin{aligned}
& \frac{N_{a, p}}{\min \left\{N_{\tilde{a}, p}:\, \tilde{a} \in \mathcal{A},\ T_{\tilde{a}, p}=1\right\}},\ \text {if} \ T_{a, p}=1, \\
&\ \ \ \ \ \ \ \ \ \ \ \ \ \  +\infty, \ \ \ \ \ \ \ \ \ \ \ \ \ \ \ \ \  \ \  \text {if} \ T_{a, p}=0.
\end{aligned}\right.
\end{equation*}

 In the case where $T_{a,p}=1$, we have $r_{a,p}={\rm Iteration}/{\rm Iteration_{min}}\in [1,+\infty)$, where $\mathrm{Iteration}$ denotes the number of iterations solving problem $p$ with algorithm $a$ and ${\rm Iteration_{min}}$ denotes the minimum number of iterations among all the algorithms in $\mathcal{A}$. For this reason, we call $r_{a,p}$ the relative number of iterations. 
Then, the performance profile is the plot of the map $\rho_a(\alpha)$ with assumed certain accuracy $\tau$, where
\begin{equation*}
    \rho_{a}(\alpha)=\frac{1}{\vert\mathcal{P}\vert}\left\vert\left\{p \in \mathcal{P}: r_{a, p} \leq \alpha\right\}\right\vert.
\end{equation*} 
Fixing $\alpha$, higher value of $\rho_a(\alpha)$ indicates algorithm $a$ solves more problems within $\alpha$ relative number of iterations.

Let $\tau=10^{-1},10^{-2},10^{-3},10^{-4},10^{-5}$ and $10^{-6}$ separately, test the algorithms in $\mathcal{A}$ with the 380 test problems and obtain the Figure \ref{fig:perf_prof} of performance profile.
\begin{figure}[htbp]
    \centering
    \begin{subfigure}[b]{0.48\textwidth}
        \includegraphics[width=\linewidth]{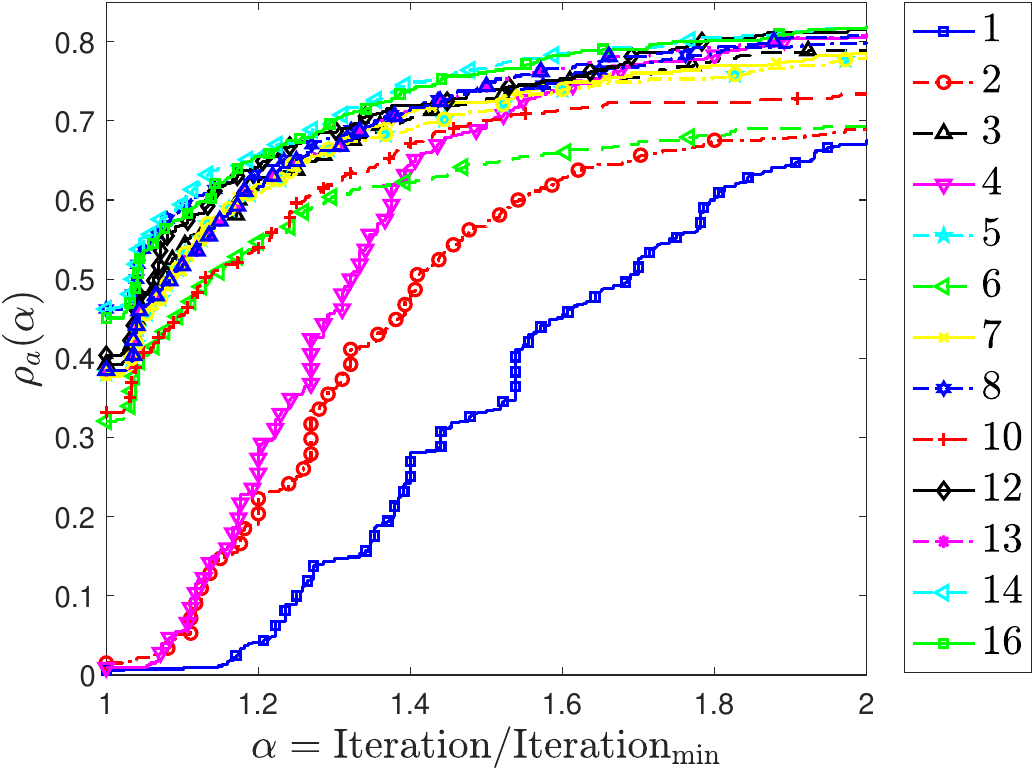}
        \caption{$\tau=10^{-1}$}
    \end{subfigure}
    % \hfill
    \begin{subfigure}[b]{0.48\textwidth}
        \includegraphics[width=\linewidth]{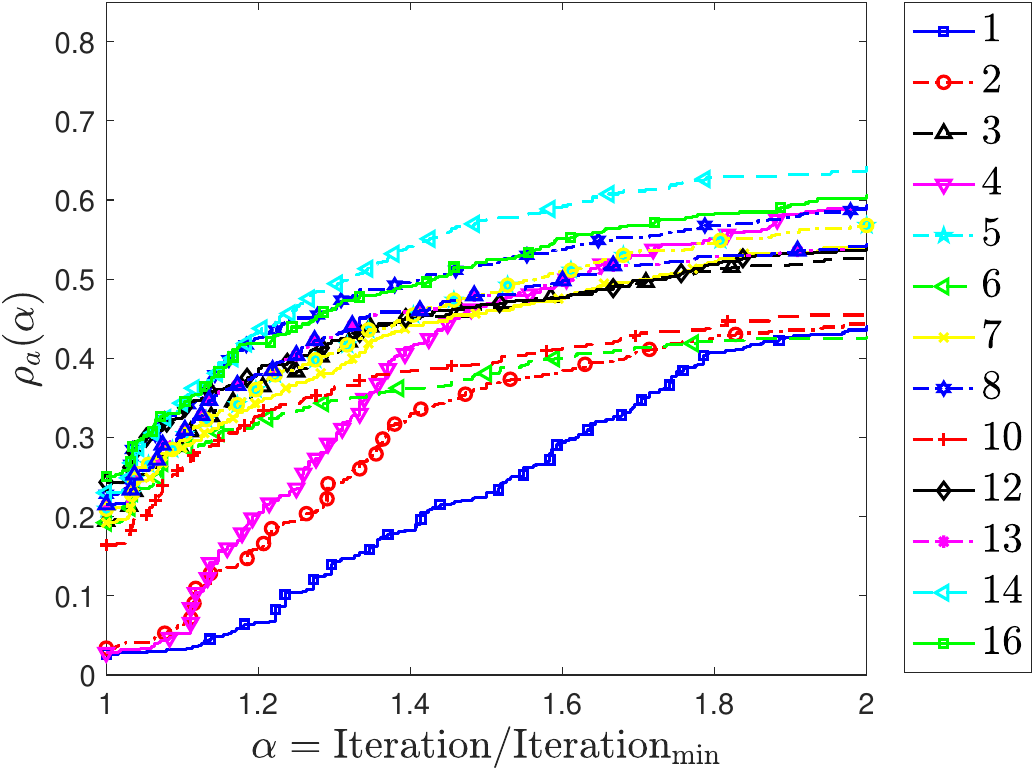}
        \caption{$\tau=10^{-2}$}
    \end{subfigure} \\ 
    % \hfill 
    \begin{subfigure}[b]{0.48\textwidth}
        \includegraphics[width=\linewidth]{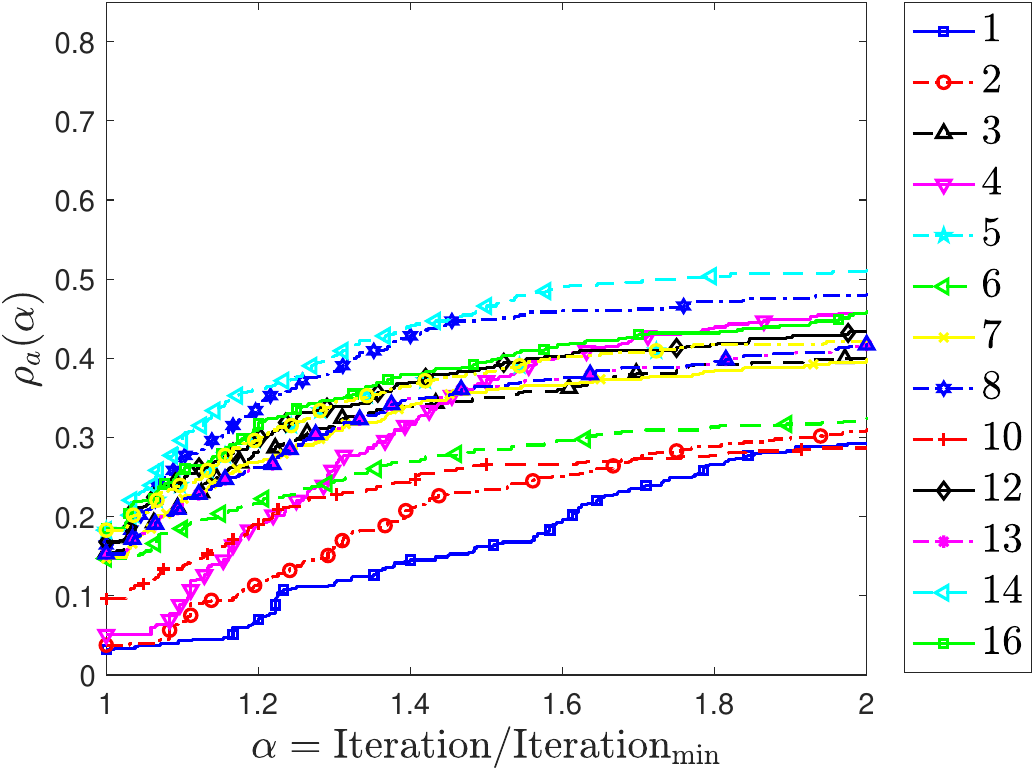}
        \caption{$\tau=10^{-3}$}
    \end{subfigure} 
    \begin{subfigure}[b]{0.48\textwidth}
        \includegraphics[width=\linewidth]{2_perf_profile.pdf}
        %\label{subfig:perf_1e-4}
        \caption{$\tau=10^{-4}$}
    \end{subfigure}\\ 
    % \hfill
    \begin{subfigure}[b]{0.48\textwidth}
        \includegraphics[width=\linewidth]{3_perf_profile.pdf}
        \caption{$\tau=10^{-5}$}
    \end{subfigure}
    % \hfill
    \begin{subfigure}[b]{0.48\textwidth}
        \includegraphics[width=\linewidth]{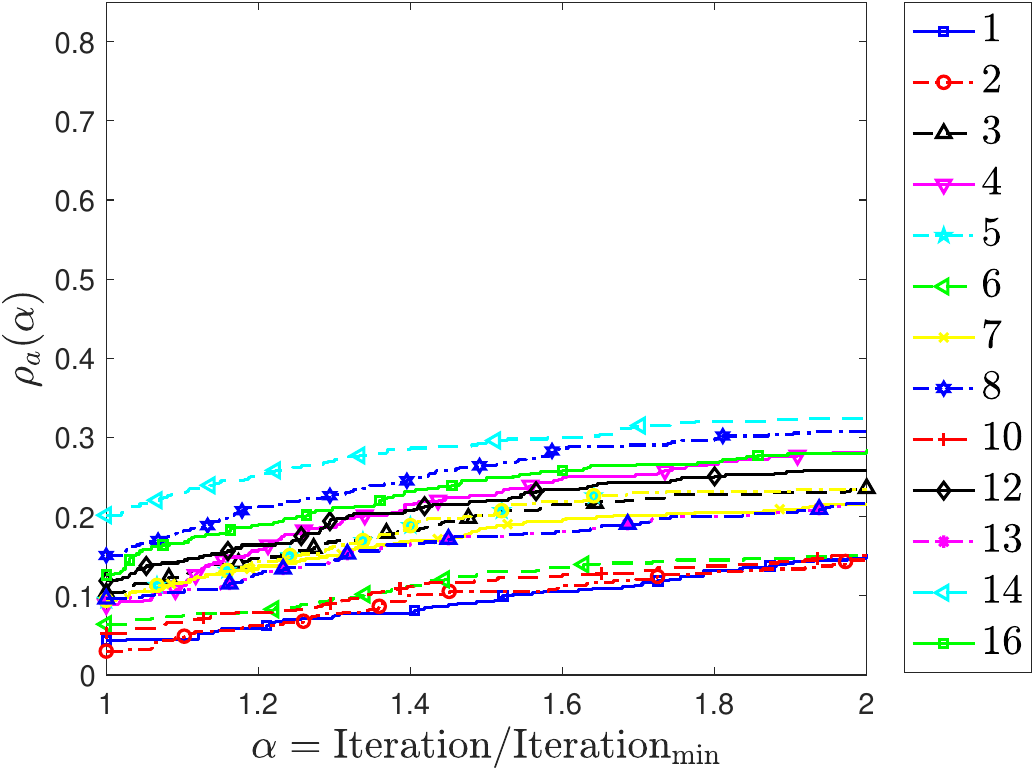}
        \caption{$\tau=10^{-6}$}
    \end{subfigure}
    
    \caption{Performance profile of different subspaces}
    \label{fig:perf_prof}
\end{figure}

Let 
\begin{equation}\label{def:good algos}
  {\rm fast}_p=\{a\in\mathcal{A}:N_{a,p}=\min\{N_{\tilde{a},p}:\tilde{a}\in\mathcal{A},T_{\tilde{a},p}=1\}\}  
\end{equation}
Then ${\rm fast}_p$ is the collection of algorithms that achieve some accuracy with the minimum number of iterations. Algorithms in ${\rm fast}_p$ are considered good algorithms. Moreover, if an algorithm $a\in\mathrm{fast}_p$, we say $a$ is good at solving problem $p$. Therefore, if problem $p$ is solved by an algorithm $a\in\mathcal{A}$ with $\alpha=1$, $a$ is good at solving problem $p$ according to our definition.

A simple observation of Figure \ref{fig:perf_prof} shows that algorithms induced by different subspaces perform differently. In performance profiles, all algorithms tend to solve fewer problems as higher accuracy is required. However, the relative positions of the graphs of $\rho_a(\alpha)$ for different $a\in\mathcal{A}$ are roughly consistent. 
In cases where the accuracy is relatively low ($\tau\geq 10^{-4}$), plots in each figure display two clusters. One cluster is represented by algorithm $1,2,4$
(corresponds to subspaces $\mathcal{S}_{\mathcal{B}}^{(1,0)},\mathcal{S}_{l}^{(1,0)}, \mathcal{S}_{\langle\bm s_1,\bm s_2\rangle}^{(1,0)}$) which has $\rho_a(1)$ close to $0$, implying that such algorithms are not in ${\rm fast}_p$ for almost all $p$. The rest of the algorithms solve positive fractions of test problems when $\alpha=1$, implying that they are good for at least a nonzero fraction of $p\in\mathcal{P}$. Such relatively good algorithms include algorithms induced by subspaces $\mathcal{S}_{l}^{(1,\sigma)}$, $\mathcal{S}_{\langle\bm s_1,\bm s_2\rangle}^{(2,0)}$ where $\mathcal{S}_{l}^{(1,\sigma)}$ is a two-dimensional truncated subspace while $\mathcal{S}_{\langle\bm s_1,\bm s_2\rangle}^{(2,0)}$ takes the classic lower-dimensional subspace ($\cong \mathbb{R}^q$, $q\in\{1,2,3\}$), which allows us to conclude that the truncated subspace has relatively the same capability in problem-solving as the classic subspaces.

Figure \ref{fig:placeholder} displays the distribution of truncated Newton step error of our sixteen subspaces while the error is between the quadratic interpolation model $\hat{m}_k$ and quadratic model $m_k$ defined in Section \ref{section: Trust region subproblem on subspace}. For each subspace (exclude the plot of 11th Subspace $\mathcal{S}_{\langle\bm s_1\rangle,\langle\bm s_2\rangle}^{(2,0)}$, 15th Subspace $\mathcal{S}_{\langle\bm s_1\rangle,\langle\bm s_2\rangle}^{(2,\sigma)}$ and 9th Subspace $\mathcal{S}_{\mathcal{B}}^{(2,0)}$, since we have proved in Section \ref{section: subspace and dimension} that subspace 11 and 12 are the same, subspace 15 and 16 are the same, and both Subspace 9 and 13 equal to $\RR^n$), 380 problems are solved and each within 1000 iterations, among all of the values, those within the first quartile and the third quartile displayed in the figure (represented by the vertical black line). The blue box highlights the fraction of values that lie in the interval $[40\%,60\%]$, capturing the median of the distribution represented by the horizontal line inside. In the figure, all upper bounds 
remain below $2$, which is consistent with the definition. Among all of the distributions, the median and the upper bound of Subspace 13 are approximately the lowest. This is in accordance with the fact that the global interpolation model approximates the object function with a smaller error. Table \ref{tab:classification of subspaces by dim} shows the classification result of Subspaces 1-16 by their dimensions (subspaces inside brackets are omitted in Figure \ref{fig:placeholder}), where the 1st to 8th Subspaces are truncated subspaces while the 9th to 16th Subspaces are classic subspaces.

\begin{table}[htbp]
    \centering
    \caption{Classifications of subspaces by dimensions}
    \centering
    \begin{tabular}{m{3cm}
    <{\centering}m{3cm}
    <{\centering}m{3cm}
    <{\centering}m{3cm}
    <{\centering}}
    \toprule
     % $\dim (\mathcal{S})$
      Subspaces $\mathcal{S}$ & $\dim(\mathcal{S})\leq 2$ & $3\leq\dim (\mathcal{S})\leq 5$ & $\dim (\mathcal{S})> 5$
     \\
    \midrule
        Serial number  & 2,3,6,7,10,14 & 4,8,12(11),16(15) & 1,5,13(9)\\         
       
    \bottomrule
    \end{tabular}       
    \label{tab:classification of subspaces by dim}
\end{table}
Combining the classification in Table \ref{tab:classification of subspaces by dim} and the Figure \ref{fig:placeholder}, we can see that within the class $\dim(\mathcal{S})\leq 2$, truncated subspaces (Subspace 2, 3, 6, 7, corresponding to  $\mathcal{S}_l^{(1,0)}$, $\mathcal{S}_{\bm s_1,\bm s_2}^{(1,0)}$, $\mathcal{S}_l^{(1,\sigma)}$, $\mathcal{S}_{\bm s_1,\bm s_2}^{(1,\sigma)}$) shows generally lower medians in comparison with classic subspaces representing by 10th Subspace $\mathcal{S}_{l}^{(2,0)}$ and 14th Subspace $\mathcal{S}_l^{(2,\sigma)}$. Moreover, error distributions of these low-dimensional truncated subspaces show a similar level of boxes and medians as high-dimensional subspaces (both truncated and classic, in class $\dim(\mathcal{S})>5)$. Observations above imply that interpolation models can be safely used under the derivative-free case when the subspace is chosen to be a low-dimensional truncated subspace.

\begin{figure}[htbp]
    \centering
    \includegraphics[width=.98\linewidth]{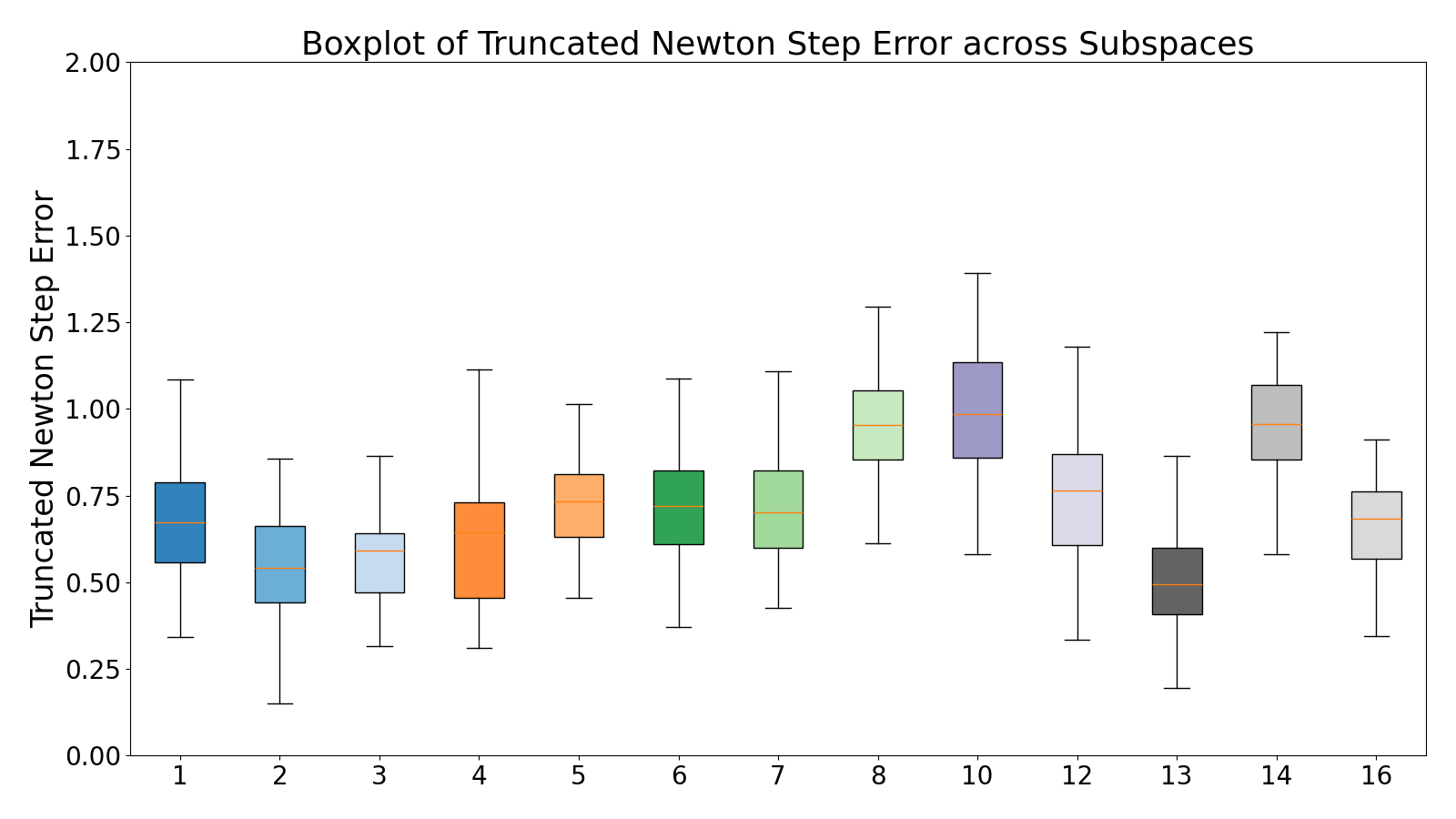}
    \caption{Truncated Newton Step Error}
    \label{fig:placeholder}
\end{figure}

Finally, we present the performance profile of MD-LAMBO (under 6th Subspace $\mathcal{S}_l^{(1,\sigma)}$), SGD, and L-BFGS,  in Figure \ref{fig: Performance profile of different algorithms} to draw a picture of the efficiency of our algorithm. The classical stochastic gradient descent (SGD) method \cite{Ghadimi2013} is used for comparison, where the step size was initialized as $\eta = 10^{-3}$ and decreased by a factor of $0.1$ every $50$ iterations, a momentum term of $0.9$ was employed to accelerate convergence, 
and a damping factor (weight decay) of $10^{-4}$ was applied to improve stability. The method was run in a stochastic setting with uniformly sampled data points at each iteration. We also implemented the limited-memory BFGS algorithm \cite{Liu1989OnTL}, 
which maintains a memory of the most recent \textcolor{black}{$20$} curvature updates to approximate the inverse Hessian. We include these two algorithms as baselines primarily because L-BFGS is famous for its fast convergence and efficiency in solving large problems, and SGD is one of the most widely used algorithms in practical applications. Notice that for all three algorithms involved, a convergence tolerance of $10^{-6}$ on the gradient norm was used. Figure \ref{fig: Performance profile of different algorithms} shows that, as the number of (relative) iterations increases, our algorithm shows the most pronounced improvement in the proportion of problems solved among the three algorithms, especially when the accuracy is relatively low ($\tau=10^{-2}$). In contrast, the plot corresponding to L-BFGS remains essentially flat, showing almost no variations with respect to iterations. By examining the intersections of the three curves with the vertical axis at $\alpha=1$, it can be observed that when the accuracy requirement is sufficiently high ($\tau\leq 10^{-4}$), plot corresponds to our algorithm intersects with the axis at the highest point, followed by plot of SGD and then L-BFGS, reflecting that our algorithm is good at solving the most problems, followed by SGD and then L-BFGS. As the accuracy varies from $10^{-2},10^{-4}$ to $10^{-6}$, the plot of our algorithm rises above the other two plots, which implies that MD-LAMBO (under 6th Subspace $\mathcal{S}_l^{(1,\sigma)}$) has greater capability in problem-solving in comparison to both the average level represented by SGD and an high-efficiency choice represented by L-BFGS. This aligns with our analysis of Algorithm \ref{algo:tcg with radius, steps form} in Section \ref{section: Trust region subproblem on subspace}.

\begin{figure}[htbp]
    \centering
    \begin{subfigure}[b]{0.28\textwidth}
        \includegraphics[height=0.96\linewidth, trim=0 0 130 0, clip]{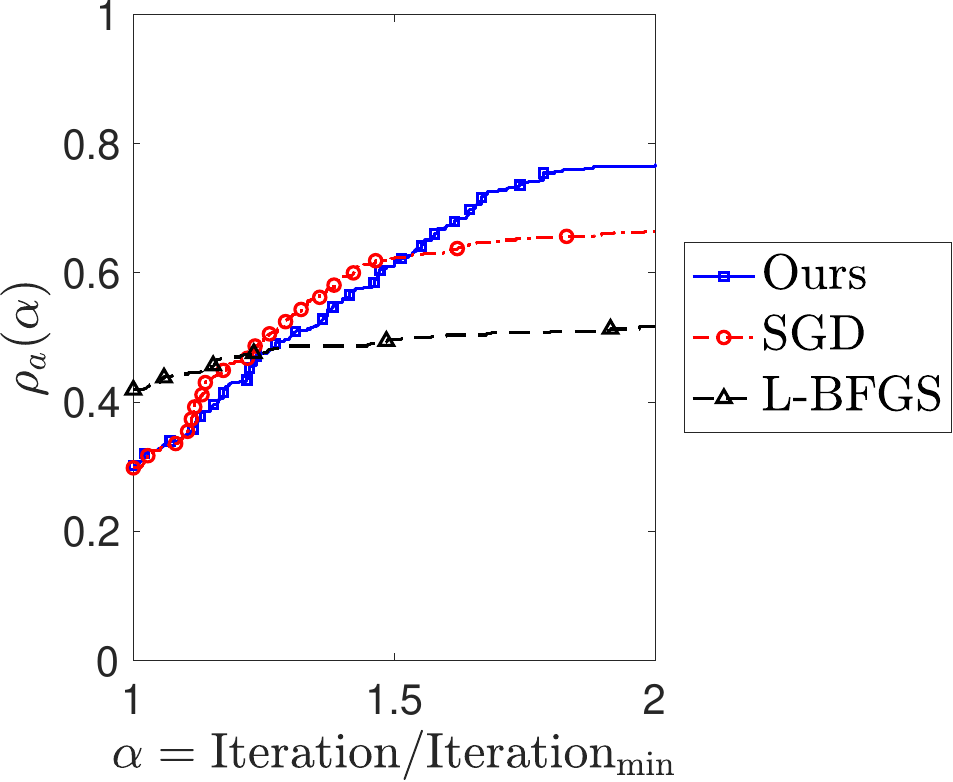}
        \caption{$\tau=10^{-2}$}
    \end{subfigure}  
    \begin{subfigure}[b]{0.28\textwidth}
        \includegraphics[height=0.96\linewidth, trim=0 0 130 0, clip]{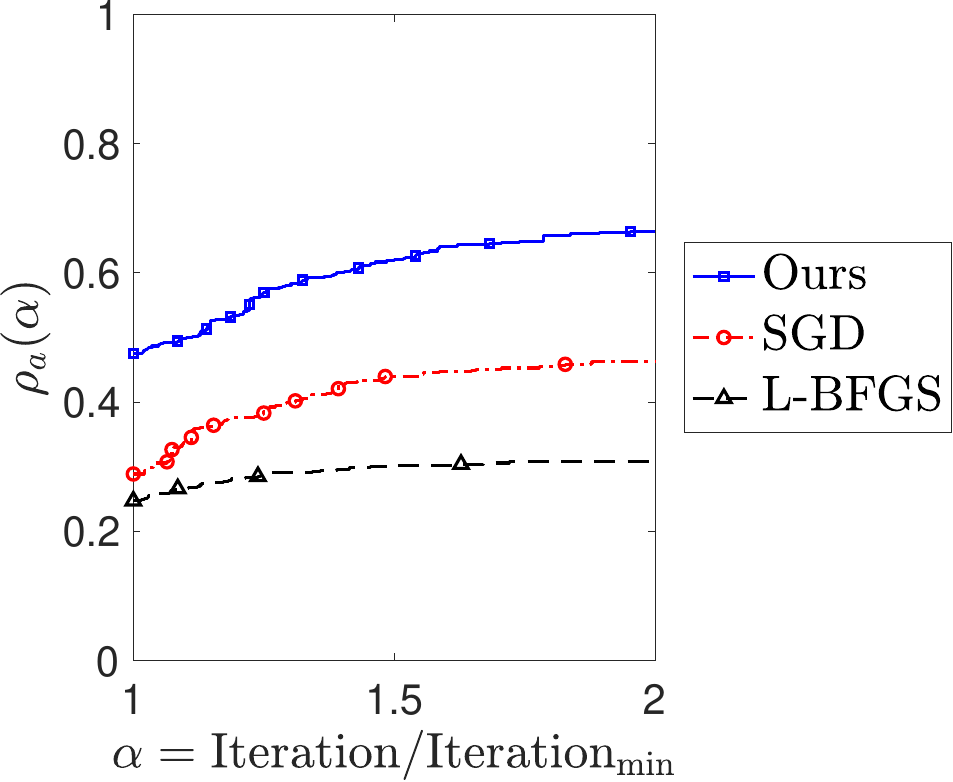}
        \caption{$\tau=10^{-4}$}
    \end{subfigure}
    \begin{subfigure}[b]{0.28\textwidth}
        \includegraphics[height=0.96\linewidth, trim=0 0 0 0, clip]{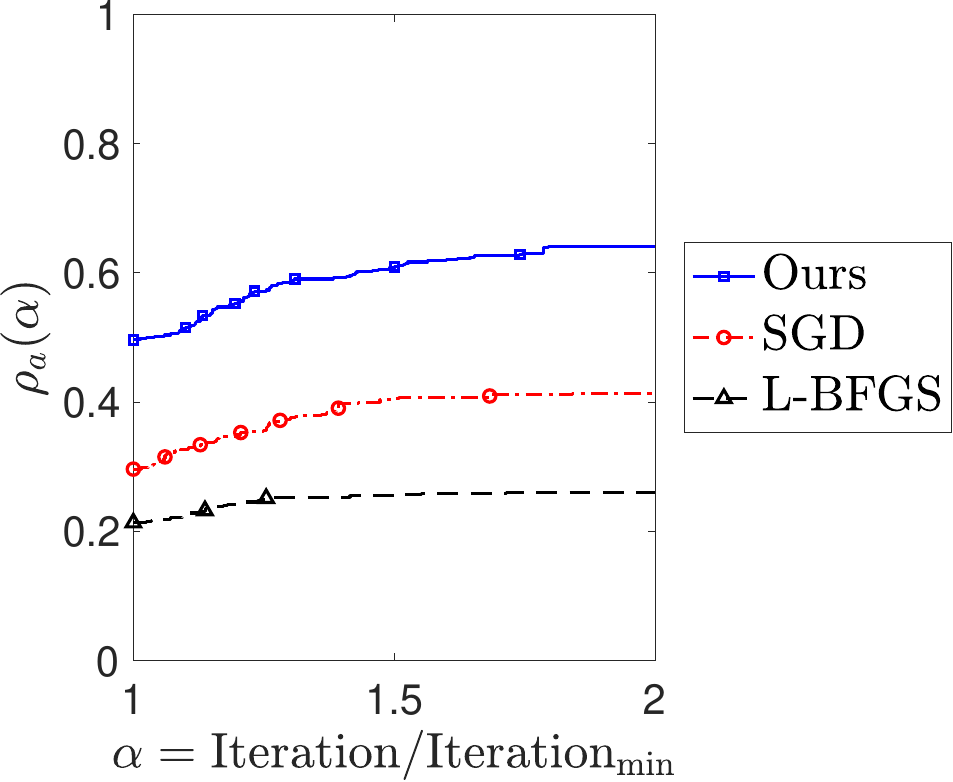}
        \caption{$\tau=10^{-6}$}
    \end{subfigure}
    
    \caption{Performance profile of different algorithms}
    \label{fig: Performance profile of different algorithms} 
\end{figure}

\section{Conclusion}
{\color{black}
This paper addresses the critical challenge of solving large-scale optimization problems by investigating advanced subspace-based methods that leverage local approximation strategies. By introducing and analyzing novel types of subspaces, this paper establishes theoretical guarantees of sufficient function value decrease. Numerical experiments further validate the effectiveness of the proposed subspaces, demonstrating their advantages over traditional approaches in driving optimization efficiency and performance.
}

%bibliography

\bibliographystyle{plainnat}
\bibliography{sample}

\begin{thebibliography}{33}
\providecommand{\natexlab}[1]{#1}
\providecommand{\url}[1]{\texttt{#1}}
\expandafter\ifx\csname urlstyle\endcsname\relax
  \providecommand{\doi}[1]{doi: #1}\else
  \providecommand{\doi}{doi: \begingroup \urlstyle{rm}\Url}\fi

\bibitem[Alarie et~al.(2021)Alarie, Audet, Gheribi, Kokkolaras, and {Le
  Digabel}]{Alarie2020}
Stéphane Alarie, Charles Audet, Aïmen~E. Gheribi, Michael Kokkolaras, and
  Sébastien {Le Digabel}.
\newblock Two decades of blackbox optimization applications.
\newblock \emph{EURO J. Comput. Optim.}, 9:\penalty0 100011, 2021.
\newblock ISSN 2192-4406.
\newblock \doi{https://doi.org/10.1016/j.ejco.2021.100011}.
\newblock URL
  \url{https://www.sciencedirect.com/science/article/pii/S2192440621001386}.

\bibitem[Audet and Hare(2017)]{Audet2017DerivativeFreeAB}
Charles Audet and Warren Hare.
\newblock {Q}uadratic {M}odels.
\newblock In \emph{Derivative-Free and Blackbox Optimization}, pages 172--175,
  2017.
\newblock URL \url{https://api.semanticscholar.org/CorpusID:125349659}.

\bibitem[Cartis and Roberts(2023)]{Cartis2023}
Coralia Cartis and Lindon Roberts.
\newblock Scalable subspace methods for derivative-free nonlinear least-squares
  optimization.
\newblock \emph{Math. Program.}, 199\penalty0 (1-2):\penalty0 461--524, 2023.
\newblock \doi{10.1007/s10107-022-01836-1}.

\bibitem[Cartis et~al.(2011)Cartis, Gould, and Toint]{CartisGouldToint2011}
Coralia Cartis, Nicholas Gould, and Philippe Toint.
\newblock Adaptive cubic regularisation methods for unconstrained optimization.
  part {I}: Motivation, convergence and numerical results.
\newblock \emph{Math. Program.}, 127:\penalty0 245--295, 04 2011.
\newblock \doi{10.1007/s10107-009-0286-5}.

\bibitem[Cartis et~al.(2022)Cartis, Fowkes, and
  Shao]{cartis2022randomisedsubspacegaussnewtonmethod}
Coralia Cartis, Jaroslav Fowkes, and Zhen Shao.
\newblock A randomised subspace gauss-newton method for nonlinear
  least-squares, 2022.
\newblock URL \url{https://arxiv.org/abs/2211.05727}.

\bibitem[Cartis et~al.(2025)Cartis, Shao, and
  Tansley]{1artis2025randomsubspacecubicregularizationmethods}
Coralia Cartis, Zhen Shao, and Edward Tansley.
\newblock Random subspace cubic-regularization methods, with applications to
  low-rank functions, 2025.
\newblock URL \url{https://arxiv.org/abs/2501.09734}.

\bibitem[Chen et~al.(2024)Chen, Hare, and Wiebe]{1chen2024q}
Yiwen Chen, Warren Hare, and Amy Wiebe.
\newblock {Q}-fully quadratic modeling and its application in a random subspace
  derivative-free method.
\newblock \emph{Comput. Optim. Appl.}, 89\penalty0 (2):\penalty0 317--360,
  2024.

\bibitem[Conn et~al.(1996)Conn, Toint., Sartenaer, and
  Gould]{Conn_Toint_Sartenaer_Gould_1996}
Andrew.~R. Conn, Philippe~L. Toint., Annick Sartenaer, and Nicholas I.~M.
  Gould.
\newblock On iterated-subspace minimization methods for nonlinear optimization.
\newblock In Adams L. and Nazareth~J. L., editors, \emph{Linear and Nonlinear
  Conjugate Gradient-Related Methods}, pages 50--78. SIAM, Philadelphia, 1996.

\bibitem[Conn et~al.(2009)Conn, Scheinberg, and Vicente]{Conn2009}
Andrew~R. Conn, Katya Scheinberg, and Lu{\'{i}}s~N. Vicente.
\newblock \emph{Introduction to Derivative-Free Optimization}, volume~8 of
  \emph{MPS-SIAM Series on Optimization}.
\newblock MPS/SIAM, Philadelphia, 2009.
\newblock \doi{10.1137/1.9780898718768}.
\newblock URL \url{http://epubs.siam.org/doi/book/10.1137/1.9780898718768}.

\bibitem[Dolan and Mor{\'e}(2002)]{EDD01}
Elizabeth~D. Dolan and Jorge~J. Mor{\'e}.
\newblock Benchmarking optimization software with performance profiles.
\newblock \emph{Math. Program.}, 91:\penalty0 201--213, 2002.
\newblock \doi{10.1007/s101070100263}.

\bibitem[Dzahini et~al.(2025{\natexlab{a}})Dzahini, Larson, Menickelly, and
  Wild]{dzahini2025noiseawarescalablesubspaceclassical}
Kwassi~Joseph Dzahini, Jeffrey~M. Larson, Matt Menickelly, and Stefan~M. Wild.
\newblock A noise-aware scalable subspace classical optimizer for the quantum
  approximate optimization algorithm, 2025{\natexlab{a}}.
\newblock URL \url{https://arxiv.org/abs/2507.10992}.

\bibitem[Dzahini et~al.(2025{\natexlab{b}})Dzahini, Wild, and Xie]{YATSOp2025}
Kwassi~Joseph Dzahini, Stefan~M. Wild, and Pengcheng Xie.
\newblock {YATSOp}: Yet another test set for optimization.
\newblock \url{https://github.com/POptUS/YATSOp}, 2025{\natexlab{b}}.
\newblock GitHub repository, accessed 2025-08-30.

\bibitem[Fletcher(2000)]{FletcherBook2000}
Roger Fletcher.
\newblock \emph{Restricted Step Methods}, chapter~5, pages 95--109.
\newblock John Wiley \& Sons, Ltd, USA, 2000.
\newblock ISBN 9781118723203.
\newblock \doi{https://doi.org/10.1002/9781118723203.ch5}.
\newblock URL
  \url{https://onlinelibrary.wiley.com/doi/abs/10.1002/9781118723203.ch5}.

\bibitem[Ghadimi and Lan(2013)]{Ghadimi2013}
Saeed Ghadimi and Guanghui Lan.
\newblock Stochastic first- and zeroth-order methods for nonconvex stochastic
  programming.
\newblock \emph{SIAM J. Optim.}, 23\penalty0 (4):\penalty0 2341--2368, 2013.

\bibitem[Gower et~al.(2019)Gower, Kovalev, Lieder, and
  Richtárik]{gower2019rsnrandomizedsubspacenewton}
Robert~M. Gower, Dmitry Kovalev, Felix Lieder, and Peter Richtárik.
\newblock {RSN}: Randomized subspace newton, 2019.
\newblock URL \url{https://arxiv.org/abs/1905.10874}.

\bibitem[Hare et~al.(2025)Hare, Roberts, and Royer]{1hare2025expected}
Warren Hare, Lindon Roberts, and Cl{\'e}ment Royer.
\newblock Expected decrease for derivative-free algorithms using random
  subspaces.
\newblock \emph{Math. Comput}, 94\penalty0 (351):\penalty0 277--304, 2025.

\bibitem[Kimiaei et~al.(2023)Kimiaei, Neumaier, and Faramarzi]{Arnold2023}
Morteza Kimiaei, Arnold Neumaier, and Parvaneh Faramarzi.
\newblock New subspace method for unconstrained derivative-free optimization.
\newblock \emph{ACM Trans. Math. Softw.}, 49\penalty0 (4), December 2023.
\newblock ISSN 0098-3500.
\newblock \doi{10.1145/3618297}.
\newblock URL \url{https://doi.org/10.1145/3618297}.

\bibitem[Liu and Nocedal(1989)]{Liu1989OnTL}
Dong~C. Liu and Jorge Nocedal.
\newblock On the limited memory bfgs method for large scale optimization.
\newblock \emph{Math. Program.}, 45:\penalty0 503--528, 1989.
\newblock URL \url{https://api.semanticscholar.org/CorpusID:5681609}.

\bibitem[Powell(1977)]{RN53}
Michael J.~D. Powell.
\newblock Restart procedures for the conjugate gradient method.
\newblock \emph{Math. Program.}, 12\penalty0 (1):\penalty0 241--254, 1977.
\newblock ISSN 1436-4646.
\newblock \doi{10.1007/BF01593790}.
\newblock URL \url{https://doi.org/10.1007/BF01593790}.

\bibitem[Sorensen(1982)]{Sorensen1982}
D.~C. Sorensen.
\newblock Newton’s method with a model trust region modification.
\newblock \emph{SIAM Journal on Numerical Analysis}, 19\penalty0 (2):\penalty0
  409--426, 1982.
\newblock \doi{10.1137/0719026}.
\newblock URL \url{https://doi.org/10.1137/0719026}.

\bibitem[Tansley and
  Cartis(2025)]{1tansley2025scalablesecondorderoptimizationalgorithms}
Edward Tansley and Coralia Cartis.
\newblock Scalable second-order optimization algorithms for minimizing low-rank
  functions, 2025.
\newblock URL \url{https://arxiv.org/abs/2501.03718}.

\bibitem[Varah(1980)]{More1980}
J.~M. Varah.
\newblock Stability restrictions on second order, three level finite difference
  schemes for parabolic equations.
\newblock \emph{SIAM J. Numer. Anal.}, 17\penalty0 (2):\penalty0 300--309,
  1980.
\newblock \doi{10.1137/0717025}.
\newblock URL \url{https://doi.org/10.1137/0717025}.

\bibitem[Woodruff(2014)]{Woodruff_2014}
David~P. Woodruff.
\newblock Computational advertising: Techniques for targeting relevant ads.
\newblock \emph{Found. Trends Theor. Comput. Sci.}, 10\penalty0
  (1–2):\penalty0 1–157, 2014.
\newblock ISSN 1551-3068.
\newblock \doi{10.1561/0400000060}.
\newblock URL \url{http://dx.doi.org/10.1561/0400000060}.

\bibitem[Xie and Wild(2025)]{xie2025remuregionalminimalupdating}
Pengcheng Xie and Stefan~M. Wild.
\newblock {ReMU}: Regional minimal updating for model-based derivative-free
  optimization, 2025.
\newblock URL \url{https://arxiv.org/abs/2504.03606}.

\bibitem[Xie and xiang Yuan(2025)]{xieh2}
Pengcheng Xie and Ya~xiang Yuan.
\newblock Least {H2} norm updating of quadratic interpolation models for
  derivative-free trust-region algorithms.
\newblock \emph{IMA Journal of Numerical Analysis}, page drae106, 03 2025.
\newblock \doi{10.1093/imanum/drae106}.
\newblock URL \url{https://doi.org/10.1093/imanum/drae106}.

\bibitem[Xie and Yuan(2024)]{xie2024newtwodimensionalmodelbasedsubspace}
Pengcheng Xie and Ya-xiang Yuan.
\newblock A new two-dimensional model-based subspace method for large-scale
  unconstrained derivative-free optimization: {2D-MoSub}, 2024.
\newblock URL \url{https://arxiv.org/abs/2309.14855}.

\bibitem[Xie and Yuan(2025{\natexlab{a}})]{xie2023dfoto}
Pengcheng Xie and Ya-xiang Yuan.
\newblock Derivative-free optimization with transformed objective functions
  {(DFOTO)} and the algorithm based on the least {Frobenius} norm updating
  quadratic model.
\newblock \emph{J. Oper. Res. Soc. China}, 13:\penalty0 327--363,
  2025{\natexlab{a}}.
\newblock URL
  \url{https://link.springer.com/article/10.1007/s40305-023-00532-x}.

\bibitem[Xie and Yuan(2025{\natexlab{b}})]{xieyuannew}
Pengcheng Xie and Ya-xiang Yuan.
\newblock A derivative-free method using a new underdetermined quadratic
  interpolation model.
\newblock \emph{SIAM J. Optim.}, 35\penalty0 (2):\penalty0 1110--1133,
  2025{\natexlab{b}}.
\newblock \doi{10.1137/23M1582023}.
\newblock URL \url{https://doi.org/10.1137/23M1582023}.

\bibitem[You et~al.(2016)You, Li, Robinson, and Vidal]{1You_2016_CVPR}
Chong You, Chun-Guang Li, Daniel~P. Robinson, and Rene Vidal.
\newblock Oracle based active set algorithm for scalable elastic net subspace
  clustering.
\newblock In \emph{Proceedings of the IEEE Conference on Computer Vision and
  Pattern Recognition (CVPR)}, June 2016.

\bibitem[Yuan(2000)]{2000On}
Ya-xiang Yuan.
\newblock On the truncated conjugate gradient method.
\newblock \emph{Math. Program.}, 87\penalty0 (3):\penalty0 561--573, 2000.
\newblock \doi{10.1007/s101079900126}.

\bibitem[Yuan and Stoer(1995)]{Yuan_Stoer_1995}
Ya-xiang Yuan and J.~Stoer.
\newblock A subspace study on conjugate gradient algorithms.
\newblock \emph{ZAMM Z. Angew. Math. Mech.}, 75:\penalty0 69--77, 1995.

\bibitem[Zhang et~al.(2023)Zhang, Ge, He, Jiang, Jiang, and
  Ye]{zhang2023drsomdimensionreducedsecondorder}
Chuwen Zhang, Dongdong Ge, Chang He, Bo~Jiang, Yuntian Jiang, and Yinyu Ye.
\newblock {DRSOM}: A dimension reduced second-order method, 2023.
\newblock URL \url{https://arxiv.org/abs/2208.00208}.

\bibitem[Zhang(2025)]{zhang2025scalablederivativefreeoptimizationalgorithms}
Zaikun Zhang.
\newblock Scalable derivative-free optimization algorithms with low-dimensional
  subspace techniques, 2025.
\newblock URL \url{https://arxiv.org/abs/2501.04536}.

\end{thebibliography}

\end{document}